\documentclass[12pt,twoside]{article}

\usepackage{amsmath,amssymb,amscd,amsthm}
\usepackage{mathrsfs}
\usepackage{latexsym}
\usepackage{indentfirst}

\usepackage{graphicx}
\usepackage{color}

\usepackage{cite}
\usepackage{enumerate}
\usepackage{authblk}

\usepackage[footnotesize]{subfigure}
\usepackage[footnotesize]{caption2}

\usepackage[colorlinks=false,pdfborder={0 0 0}]{hyperref}
\hypersetup{
  colorlinks=true,
  linkcolor=blue,
  citecolor=blue,
  urlcolor=blue
}

\allowdisplaybreaks[4]

\newtheorem{theorem}{Theorem}[section]

\newtheorem{lemma}{Lemma}[section]
\newtheorem{proposition}{Proposition}[section]

\newtheorem{definition}{Definition}[section]

\makeatletter \oddsidemargin 0.0in \evensidemargin \oddsidemargin
\begin{document}
\raggedbottom

\title{Local Hadamard well-posedness of the 2D primitive equations without thermal diffusion under strict stratification}

\author{%
  {\large Fengjie Luo}\thanks{%
    \emph{E-mail}:
    \href{mailto:Luofengjie664@163.com}{Luofengjie664@163.com}.},
  {\large Quan Wang}\thanks{%
    \emph{E-mail}:
    \href{mailto:xihujunzi@scu.edu.cn}{xihujunzi@scu.edu.cn}.}
  and {\large Zhengdong Du}\thanks{%
    \emph{E-mail}:
    \href{mailto:zdu1985@qq.com}{zdu1985@qq.com}
    (corresponding author).}\\
  {\small Department of Mathematics, Sichuan University}\\
  {\small Chengdu, Sichuan 610064, P. R. China}
}

\date{}

% Compact title layout, local to this title block.
\begingroup
\makeatletter
\renewcommand{\@maketitle}{%
  \newpage
  \null
  \vspace*{-1.5cm}%
  {\centering
    \let\footnote\thanks
    {\LARGE \@title \par}%
    \vskip 0.8em%
    {\large
      \lineskip .5em%
      \@author\par}%
  }%
  \par
  \vskip 0.4em%
}
\makeatother
\setcounter{page}{1}
\maketitle
\endgroup
\thispagestyle{empty}

% Abstract, keywords, and classification: wording unchanged.
% Compact spacing between the abstract heading and its text.
\begingroup
\renewenvironment{abstract}{%
  \small
  \quote
  \setlength{\parskip}{0pt}%
  {\centering\bfseries\abstractname\par}%
  \nobreak\vspace{0.5em}\nobreak
  \noindent\ignorespaces
}{%
  \par\endquote
}

\begin{abstract}
\noindent We study the two-dimensional non-rotating primitive equations with anisotropic viscosity and no thermal diffusion.
For doubly periodic perturbations of a hydrostatic equilibrium with an affine temperature profile, we establish local Hadamard well-posedness
in Sobolev spaces under strict stratification.
The initial horizontal velocity and temperature have sufficient regularity and vanishing vertical means, but need not be small.
The estimates close at the same Sobolev order for these variables despite the horizontal derivative loss in the diagnostic vertical velocity.
Stratification-weighted energy estimates and a horizontal compensation argument avoid an additional temperature derivative
while retaining the full vertical dependence and the vertically homogeneous velocity component determined by periodic compatibility.
A one-sided comparison places higher-order norms of the more regular solution only in terms involving lower-order differences.
This structure allows the Bona--Smith method to yield continuous dependence in the original Sobolev topology.
We obtain a unique maximal strong solution in the strictly stratified class, together with continuous dependence of the diagnostic vertical
velocity and pressure in their natural Sobolev spaces and lower semicontinuity of the maximal lifespan.

\par\vspace{0.4cm}
\noindent\textbf{Keywords:} Primitive equations, local Hadamard well-posedness, zero thermal diffusion, strict stratification, anisotropic viscosity.

\par\vspace{0.4cm}
\noindent\textbf{AMS (2010) subject classification:} 35Q86, 35B30, 76D03.
\end{abstract}
\endgroup

% Finish the first page before restoring the body layout.
\clearpage
\flushbottom

% Retain the original body spacing.
\baselineskip 15pt
\parskip 10pt

%=====================================================================
% Section 1. Introduction
%=====================================================================
\section{Introduction}

A basic problem in fluid mechanics is to establish
existence, uniqueness, and continuous dependence on initial data.
For large-scale atmospheric and oceanic flows, the small aspect ratio
leads to the primitive equations, in which hydrostatic balance replaces
vertical momentum evolution. Density stratification and anisotropic
transport are central to these models \cite{Vallis2017}.
Lions, Temam, and Wang developed the global weak-solution theory for
the viscous primitive equations in the early 1990s
\cite{LionsTemamWang1992a,LionsTemamWang1992b}. Subsequent work established
anisotropic strong-solution estimates \cite{GuillenMasmoudiRodriguez2001}
and global strong solutions in two dimensions under suitable boundary
conditions \cite{BreschKazhikhovLemoine2004} and in three dimensions
\cite{CaoTiti2007,Kobelkov2007,KukavicaZiane2007}. Hieber, Hussein, and
Kashiwabara \cite{HieberHusseinKashiwabara2016} developed an
\(L^p\)-theory with heat and salinity diffusion.
Alongside this well-posedness theory, the hydrostatic approximation
has been justified at the weak-solution level \cite{AzeradGuillen2001}
and through strong convergence of the horizontal velocity
\cite{FurukawaEtAl2020}. For hydrodynamic models without an active
temperature, Furukawa et al.\ \cite{FurukawaEtAl2025} distinguished
three limits according to the vertical viscosity scale: primitive
equations with full or only horizontal viscosity, and two-dimensional
Navier--Stokes equations.

The inviscid theory illustrates the role of structural assumptions.
Masmoudi and Wong \cite{MasmoudiWong2012} proved local Sobolev
well-posedness for hydrostatic Euler under the local Rayleigh condition.
Han-Kwan and Nguyen \cite{HanKwanNguyen2016} established nonlinear
ill-posedness near certain shear flows. For hydrostatic Euler--Boussinesq,
nonlinear ill-posedness and failure of the hydrostatic limit were proved
near certain stratified shear profiles violating the Miles--Howard
criterion \cite{BianchiniCotiZelatiErtzbischoff2025}. This criterion
relates the strength of stratification to the shear. Ibrahim et al.\
\cite{IbrahimLinQianTiti2026} studied stable singularity formation
under small analytic perturbations of a blowup profile in a
symmetry-reduced system with non-constant temperature. Stable
stratification also permits the use of isopycnal coordinates.
In this formulation, Bianchini and Duch\^ene \cite{BianchiniDuchene2024} established Sobolev
well-posedness and the hydrostatic limit with isopycnal diffusivity
and no momentum viscosity. For the non-hydrostatic Euler equations
without diffusive regularization, Fradin \cite{Fradin2025} proved
local Sobolev well-posedness in isopycnal coordinates with shear and
impermeable upper and lower boundaries.

For viscous primitive equations coupled to temperature, Cao, Li, and
Titi established strong well-posedness with full momentum viscosity
and only vertical thermal diffusion \cite{CaoLiTiti2014Vertical}. They also treated
systems with only horizontal viscosity and either horizontal \cite{CaoLiTiti2017}
or vertical thermal diffusion \cite{CaoLiTiti2020}. These results cover several
configurations of partial thermal diffusion under the regularity and
compatibility conditions of the respective formulations.
For the related two-dimensional non-hydrostatic Boussinesq equations,
global regularity without thermal diffusion is known with full
velocity viscosity \cite{HouLi2005,Chae2006}. Danchin and Paicu
\cite{DanchinPaicu2011} treated the case of only horizontal viscosity.
Kang, Lee, and Nguyen \cite{KangLeeNguyen2024} obtained global
well-posedness in suitable Sobolev spaces on \(\mathbb R^2\) for a
specific partial-viscosity configuration without thermal diffusion.
They also established small-perturbation stability and large-time
results for systems with thermal diffusion in one direction. On
smooth bounded domains with no-slip conditions, Ayd\i n and Jayanti \cite{AydinJayanti2025}
proved persistence of equal-order integer Sobolev regularity, with
higher-order compatibility conditions. The long-time evolution of
scalar gradients was studied by Kiselev,
Park, and Yao \cite{KiselevParkYao2025}, who proved that the
scalar-gradient \(L^2\)-norm can become arbitrarily large along
sequences of times tending to infinity for viscous solutions satisfying
symmetry and sign assumptions.

For the periodic Boussinesq system, Tao et al.\ \cite{TaoEtAl2020}
studied perturbations of an affine, stably stratified equilibrium.
With normalized buoyancy, their perturbation system is
\[
\left\{
\begin{aligned}
 &\partial_t\mathbf u+\mathbf u\cdot\nabla\mathbf u+\nabla p_{\mathrm B}
   =\nu_{\mathrm B}\Delta\mathbf u+\vartheta\mathbf e_2,\\
 &\partial_t\vartheta+\mathbf u\cdot\nabla\vartheta+b u_2=0,\\
 &\nabla\cdot\mathbf u=0,
\end{aligned}
\right.
\]
where \(\mathbf u=(u_1,u_2)\), \(\mathbf e_2=(0,1)^{\mathsf T}\), and
\(\nu_{\mathrm B},b>0\). They proved global existence without smallness
in their data class, linear stability, and nonlinear stability for
small \(L^2\)-perturbations. For the nonlinear problem, they also
established asymptotic
horizontal stratification and explicit algebraic velocity-decay
estimates under the respective additional assumptions. For the
two-dimensional viscous primitive equations, Jiang and Wang
\cite{JiangWang2025} studied stability and bifurcation near a linear
temperature profile in a periodic channel with full thermal diffusion
and prescribed temperatures at the upper and lower boundaries.

Motivated by these well-posedness and stability results, we ask whether
momentum viscosity and strict stratification suffice for local Hadamard
well-posedness of a doubly periodic hydrostatic system with a
non-diffusive active temperature. In the Boussinesq system above, both
velocity components satisfy evolution equations. In the hydrostatic
system considered here, the vertical velocity is determined
diagnostically through incompressibility and periodic compatibility,
with a loss of one horizontal derivative. Establishing the local theory
therefore requires estimates that account for this diagnostic structure
and the transport of temperature while closing at the initial Sobolev
order.

We consider the following two-dimensional non-rotating system. Let
\(\Omega:=\mathbb T_x\times\mathbb T_z\), with periods \(L_x,L_z>0\). The
perturbations are periodic on \(\Omega\), whereas the total fields are
understood on \(\mathbb T_x\times\mathbb R_z\). The total horizontal and
vertical velocities \((V,W)\), temperature \(T\), and kinematic pressure
\(\pi=\frac{P}{\rho_0}\) satisfy
\begin{equation}\label{eq:1.1.1}
\left\{
\begin{aligned}
 &\partial_tV+V\partial_xV+W\partial_zV
   -\mu_x\partial_{xx}V-\mu_z\partial_{zz}V+\partial_x\pi=0,\\
 &\partial_z\pi=-g\bigl(1-\beta(T-T_0)\bigr),\\
 &\partial_tT+V\partial_xT+W\partial_zT=0,\\
 &\partial_xV+\partial_zW=0.
\end{aligned}
\right.
\end{equation}
Here \(g,\beta,\rho_0>0\) denote the gravitational acceleration, thermal
expansion coefficient, and reference density, respectively. The reference
temperature is \(T_0\), and \(\mu_x,\mu_z>0\) are the horizontal and
vertical momentum viscosities. The hydrostatic relation follows from
\(\partial_zP=-g\rho\) and the linearized equation of state
\(\rho=\rho_0(1-\beta(T-T_0))\). We study this model directly, without
taking a joint hydrostatic and vanishing-diffusivity limit.

For \(\gamma>0\), a hydrostatic equilibrium with affine temperature is
\[
 (V_s,W_s,T_s)=(0,0,T_0+\gamma z),\qquad
 \pi_s(z)=-gz+\frac{g\beta\gamma}{2}z^2.
\]
Thus \(\partial_z\pi_s=-g(1-\beta\gamma z)\). Write \(V=v\), \(W=w\),
\(T=T_s+\theta\), and \(\pi=\pi_s+p\), where \((v,w,\theta,p)\) are
real-valued and doubly periodic. The velocities are vertically periodic,
whereas
\[
 T(x,z+L_z,t)-T(x,z,t)=\gamma L_z,\qquad
 \pi(x,z+L_z,t)-\pi(x,z,t)=\pi_s(z+L_z)-\pi_s(z).
\]
Subtracting the background hydrostatic balance from \eqref{eq:1.1.1} gives
\(\partial_zp=g\beta\theta\), and the transport of \(T_s\) contributes
\(\gamma w\) to the temperature equation. The perturbations therefore
satisfy
\begin{equation}\label{eq:1.1.2}
\left\{
\begin{aligned}
 &\partial_tv+v\partial_xv+w\partial_zv
   -\mu_x\partial_{xx}v-\mu_z\partial_{zz}v+\partial_xp=0,\\
 &\partial_zp=g\beta\theta,\\
 &\partial_t\theta+v\partial_x\theta+w\partial_z\theta+\gamma w=0,\\
 &\partial_xv+\partial_zw=0.
\end{aligned}
\right.
\end{equation}
The reference state is stably stratified, since
\(\partial_z\rho_s=-\rho_0\beta\gamma<0\), with squared
Brunt--V\"ais\"al\"a frequency \(g\beta\gamma>0\). For the perturbed
state, strict stratification means \(R:=\gamma+\partial_z\theta>0\). We
write \(H^r=H^r(\Omega)\), and an overbar denotes vertical averaging.

For every integer \(s\ge4\), we establish local Hadamard well-posedness
for \eqref{eq:1.1.2} with strictly stratified \(H^s\times H^s\) initial
data of zero vertical means, without a smallness assumption. The result
gives a unique maximal strong solution in the strictly stratified class,
continuous dependence in the original Sobolev topology on compact
subintervals of its lifespan, continuity of the diagnostic variables in
their natural spaces, and lower semicontinuity of the maximal lifespan.
The precise statement is Theorem~\ref{thm:hadamard} in
Subsection~\ref{subsec:main-result}.
Three main difficulties arise in establishing this result.

First, the top-order temperature term \(R\partial_x^s w\) is not
controlled in \(L^2\) by the \(H^s\)-norms of \(v,\theta\) alone. Strict
stratification supplies the positive weight \(q=R^{-1}\). For smooth
compatible solutions and \(|\alpha|=s\), hydrostatic balance and
incompressibility yield
\[
 \int_\Omega(\partial_x\partial^\alpha p)\partial^\alpha v
 +g\beta\int_\Omega q(R\partial^\alpha w)\partial^\alpha\theta=0.
\]
The principal coupling therefore cancels from the combined energy
estimate, without a separate top-order bound for \(w\) or thermal
smoothing.

Second, periodic pressure compatibility requires \(\bar\theta=0\), and its
preservation determines \(\bar
w=-\frac{1}{\gamma}\partial_x\overline{v\theta}\). This component cannot
be prescribed independently. Directly estimating \(\partial_x^s\bar w\)
would require an extra horizontal derivative of the averaged flux. We
instead estimate its energy pairing by transferring that derivative to a
velocity factor, where viscous dissipation is available. This compensation
retains the full diagnostic velocity and vertical dependence, without
imposing \(\bar w=0\) or an \(H^{s+1}\)-bound for \(\theta\).

Finally, lower-order stability alone does not yield continuous dependence
in \(H^s\times H^s\). For the stratification-dependent projection
\(\mathcal P_Rf=f-\frac{R}{\gamma}\bar f\), separate estimates of the
projection commutator and projection difference would again require an
extra derivative of the unregularized temperature. We avoid this loss by
estimating them jointly. For a more regular comparison solution \(U^N\),
write \(\delta U^N=U^N-U\), \(\Delta R=R^N-R\), and \(\Delta F=F^N-F\),
where \(U=(v,\theta)\), \(F=v\partial_x\theta+w^0R\),
\(w^0=-\partial_z^{-1}\partial_xv\), and \(F^N\) is defined analogously.
Here \(\partial_z^{-1}\) is the periodic primitive with zero vertical
mean. The highest-order terms satisfy
\[
 (\partial^\alpha R)\overline{\Delta F}
 +(\partial^\alpha\Delta R)\overline{F^N}
 =(\partial^\alpha R^N)\overline{\Delta F}
 +(\partial^\alpha\Delta R)\overline F.
\]
The extra coefficient derivative falls on \(R^N\), not \(R\). Since
\(\partial^\alpha\Delta R=\partial_z\partial^\alpha\delta\theta^N\), the
remaining endpoint has a weighted pairing that extends to a continuous
quadratic form on \(L^2\). Lemma~\ref{lem:projected-commutator} gives the
corresponding joint estimate. The one-sided estimate of
Proposition~\ref{prop:one-sided-comparison} then places the higher-order
norms of \(U^N\) only in terms involving the lower-order difference, not
in the coefficient of the highest-order difference energy. This permits
the Bona--Smith method \cite{BonaSmith1975} to yield same-order continuity
without extra regularity of the reference temperature.

The paper is organized as follows. Section~\ref{sec:diagnostic} derives
the diagnostic and projected formulations, records the analytic estimates,
and states the main result. Section~\ref{sec:friedrichs} constructs the
Friedrichs approximations and proves uniform weighted estimates.
Section~\ref{sec:existence} establishes local strong solutions,
lower-order stability, uniqueness, and continuation.
Section~\ref{sec:bona-smith} proves the higher-order and one-sided
comparison estimates and completes the proof of
Theorem~\ref{thm:hadamard}.

%=====================================================================
% End of Section 1
%=====================================================================

%=====================================================================
% Section 2. Preliminaries and main result
%=====================================================================
\section{Preliminaries and main result}
\label{sec:diagnostic}

We first derive the diagnostic and projected formulations and
record the analytic estimates used in the proofs. We then introduce
the strong-solution class and state the main result.

\subsection{Diagnostic formulation and analytic estimates}
\label{subsec:preliminaries}

All functions are real-valued and periodic on
\(\Omega=\mathbb T_x\times\mathbb T_z\), with fundamental cell
\([0,L_x]\times[0,L_z]\). For a periodic function \(f\), write
\[
 \bar f(x):=\frac{1}{L_z}\int_0^{L_z}f(x,z)\,\mathrm dz,
 \qquad
 \langle f\rangle_x(z):=\frac{1}{L_x}\int_0^{L_x}f(x,z)\,\mathrm dx,
\]
and \(\langle f\rangle:=\langle\bar f\rangle_x\).
The same notation is used for functions independent of one variable.
For \(U=(v,\theta)\), the product norm is
\(\|U\|_{H^r\times H^r}:=\|v\|_{H^r}+\|\theta\|_{H^r}\).

We first derive the diagnostic variables.
Vertical integration of incompressibility gives \(\partial_x\bar v=0\),
and spatial integration of the momentum equation yields
\(\partial_t\langle v\rangle=0\). Thus
\(\bar v=\langle v\rangle\) is conserved, and \(\bar v_0=0\)
implies \(\bar v=0\). Periodicity of \(p\) and
\(\partial_zp=g\beta\theta\) require \(\bar\theta=0\).
Averaging the temperature equation gives
\(\gamma\bar w=-\partial_x\overline{v\theta}\).
Together with \(\partial_zw=-\partial_xv\), this determines
\begin{equation}\label{eq:1.2.1}
 w=-\partial_z^{-1}\partial_xv
   -\frac{1}{\gamma}\partial_x\overline{v\theta}.
\end{equation}
Here \(\partial_z^{-1}\) is the periodic primitive with zero vertical
mean, defined in Fourier variables by \eqref{eq:2.9} below.
The averaged momentum equation gives
\(\partial_x\bar p=-\partial_x\overline{v^2}\). With the normalization
\begin{equation}\label{eq:1.2.2}
 \langle p\rangle=0,
\end{equation}
the hydrostatic relation therefore yields
\begin{equation}\label{eq:1.2.3}
 p=g\beta\partial_z^{-1}\theta-\overline{v^2}
   +\langle v^2\rangle.
\end{equation}
Neither \(w\) nor \(p\) requires independent initial data.

On the compatibility class \(\bar v=\bar\theta=0\),
system~\eqref{eq:1.1.2} is therefore equivalent to
\begin{equation}\label{eq:2.1}
\left\{
\begin{aligned}
 &\partial_t v+v\partial_xv+w\partial_zv
   -\mu_x\partial_{xx}v-\mu_z\partial_{zz}v
   +g\beta\partial_x\partial_z^{-1}\theta
   -\partial_x\overline{v^2}=0,\\
 &\partial_t\theta+v\partial_x\theta+w\partial_z\theta+\gamma w=0,
\end{aligned}
\right.
\end{equation}
where \(w\) is given by \eqref{eq:1.2.1}.
Conversely, a solution of \eqref{eq:2.1} in this compatibility class,
together with the diagnostic variables defined by
\eqref{eq:1.2.1} and \eqref{eq:1.2.3}, satisfies
\eqref{eq:1.1.2} and \eqref{eq:1.2.2}.

To obtain the projected temperature equation, set
\begin{equation}\label{eq:2.2}
 R:=\gamma+\partial_z\theta,
 \qquad w^0:=-\partial_z^{-1}\partial_xv,
 \qquad \bar w:=-\frac{1}{\gamma}\partial_x\overline{v\theta},
 \qquad w=w^0+\bar w.
\end{equation}
Then \(\overline{w^0}=0\), \(\partial_zw^0=-\partial_xv\), and
\(\bar R=\gamma\). Hence the operator
\begin{equation}\label{eq:2.6}
 \mathcal P_Rf:=f-\frac{R}{\gamma}\bar f
\end{equation}
is a projection onto the zero-vertical-mean subspace:
\(\overline{\mathcal P_Rf}=0\), and \(\mathcal P_Rf=f\) whenever
\(\bar f=0\). In particular, \(\mathcal P_R^2=\mathcal P_R\).
For \(F:=v\partial_x\theta+w^0R\), integration by parts in \(z\) gives
\begin{equation}\label{eq:2.3}
 \bar F=\partial_x\overline{v\theta}=-\gamma\bar w,
 \qquad \mathcal P_RF=v\partial_x\theta+wR.
\end{equation}
The temperature equation consequently takes the form
\begin{equation}\label{eq:2.7}
 \partial_t\theta+
 \mathcal P_R\bigl(v\partial_x\theta+w^0\partial_z\theta
                  +\gamma w^0\bigr)=0.
\end{equation}
If \(R\in L^\infty(\Omega)\), \(R\ge\kappa>0\), and
\(f,\psi\in L^2(\Omega)\) with \(\bar\psi=0\), then
\begin{equation}\label{eq:2.4}
 \int_\Omega\frac{\mathcal P_Rf}{R}\,\psi\,\mathrm dx\,\mathrm dz
 =\int_\Omega\frac{f}{R}\,\psi\,\mathrm dx\,\mathrm dz.
\end{equation}
The difference vanishes because \(\bar\psi=0\).
In particular, this identity applies to differentiated temperatures.

We now specify the Fourier convention. Let
\(\mathfrak K=\frac{2\pi}{L_x}\mathbb Z\times\frac{2\pi}{L_z}\mathbb Z\)
and \(\mathfrak K_*:=\{(k,m)\in\mathfrak K:m\ne0\}\).
For \(f\in L^2(\Omega)\), set
\[
 \widehat f(k,m):=\frac{1}{L_xL_z}\int_\Omega
       f(x,z)e^{-i(kx+mz)}\,\mathrm dx\,\mathrm dz.
\]
Then \(f=\sum_{(k,m)\in\mathfrak K}\widehat f(k,m)e^{i(kx+mz)}\)
in \(L^2(\Omega)\). For \(r\in\mathbb R\), the Sobolev norm is
\begin{equation}\label{eq:2.8}
 \|f\|_{H^r}^2
 =L_xL_z\sum_{(k,m)\in\mathfrak K}
        (1+k^2+m^2)^r|\widehat f(k,m)|^2.
\end{equation}
The periodic primitive is defined by
\begin{equation}\label{eq:2.9}
 \widehat{\partial_z^{-1}f}(k,m)=
 \begin{cases}
  (im)^{-1}\widehat f(k,m),&m\ne0,\\
  0,&m=0.
 \end{cases}
\end{equation}
It commutes with spatial derivatives and satisfies
\(\partial_z\partial_z^{-1}f=f-\bar f\) and
\(\overline{\partial_z^{-1}f}=0\). In particular, it is the unique
zero-vertical-mean primitive when \(\bar f=0\).
Since \(|m|\ge\frac{2\pi}{L_z}\) on \(\mathfrak K_*\),
for every \(r\ge0\),
\begin{equation}\label{eq:2.10}
 \|\partial_z^{-1}f\|_{H^r}
 +\|\partial_z^{-1}\partial_xf\|_{H^{r-1}}
 \le C\|f\|_{H^r}.
\end{equation}
The second term is understood in \(H^{-1}\) when \(r=0\).
For \(\bar f=0\), the integral representation of the primitive also gives
\begin{equation}\label{eq:2.11}
 \|\partial_z^{-1}f\|_{L^\infty}\le C\|f\|_{L^\infty}.
\end{equation}
Under the same mean constraint, the nonzero vertical frequencies yield
the Poincar\'e estimate
\begin{equation}\label{eq:2.12}
 \|f\|_{H^{r+1}}\le C\bigl(
       \|\partial_xf\|_{H^r}+\|\partial_zf\|_{H^r}\bigr),
 \qquad r\ge0.
\end{equation}
The constants in \eqref{eq:2.10}--\eqref{eq:2.12} depend only on the
regularity index and the periods.

For \(\alpha=(\alpha_1,\alpha_2)\in\mathbb N_0^2\), write
\(\partial^\alpha:=\partial_x^{\alpha_1}\partial_z^{\alpha_2}\) and
\(|\alpha|:=\alpha_1+\alpha_2\). For each integer \(r\ge0\),
\begin{equation}\label{eq:2.13}
 C^{-1}\|f\|_{H^r}^2
 \le\sum_{|\alpha|\le r}\|\partial^\alpha f\|_{L^2}^2
 \le C\|f\|_{H^r}^2.
\end{equation}
The energy estimates use this equivalent derivative norm with unit
weights. The relation \(\nu\le\alpha\) is understood componentwise, and
\(\binom{\alpha}{\nu}:=\binom{\alpha_1}{\nu_1}
\binom{\alpha_2}{\nu_2}\). We use \(H_x^r\), \(L_x^p\), and
\(W_x^{j,\infty}\) for spaces on \(\mathbb T_x\), and identify their
functions with their \(z\)-independent extensions when needed.
On a time interval \([0,T]\) fixed by the context, we write
\(C_tX=C([0,T];X)\), \(L_t^pX=L^p(0,T;X)\), and
\(W_t^{1,p}X=W^{1,p}(0,T;X)\) for a Banach space \(X\).

The nonlinear estimates are the standard Sobolev product, commutator,
and composition bounds \cite{BahouriCheminDanchin2011,KatoPonce1988}.
They are understood whenever the norms on the right-hand sides are
finite. For \(r\ge0\), the tame product estimate is
\begin{equation}\label{eq:2.14}
 \|fg\|_{H^r}\le C\bigl(
       \|f\|_{L^\infty}\|g\|_{H^r}
       +\|g\|_{L^\infty}\|f\|_{H^r}\bigr).
\end{equation}
For an integer \(r=|\alpha|\ge1\), with
\([\partial^\alpha,f]g:=\partial^\alpha(fg)-f\partial^\alpha g\),
\begin{equation}\label{eq:2.15}
 \|[\partial^\alpha,f]g\|_{L^2}\le C\bigl(
       \|\nabla f\|_{L^\infty}\|g\|_{H^{r-1}}
       +\|f\|_{H^r}\|g\|_{L^\infty}\bigr).
\end{equation}
When both endpoint terms are removed from the Leibniz expansion, one has
\begin{equation}\label{eq:2.5}
 \left\|
 \sum_{\substack{0<\nu\le\alpha\\\nu\ne\alpha}}
 \binom{\alpha}{\nu}
 (\partial^\nu f)(\partial^{\alpha-\nu}g)
 \right\|_{L^2}
 \le C\bigl(\|\nabla f\|_{L^\infty}\|g\|_{H^{r-1}}
       +\|\nabla g\|_{L^\infty}\|f\|_{H^{r-1}}\bigr),
\end{equation}
for every integer \(r\ge2\) and \(|\alpha|\le r\).
For \(|\alpha|\le1\), the sum is empty. This estimate controls the
mixed terms using only \(r-1\) derivatives of each factor.
We also use the balanced product estimate: if
\(\sigma_1,\sigma_2\ge0\) and \(\sigma_1+\sigma_2>1\), then
\begin{equation}\label{eq:2.16}
 \|fg\|_{L^2}\le C\|f\|_{H^{\sigma_1}}\|g\|_{H^{\sigma_2}}.
\end{equation}
In particular, for \(r>1\),
\begin{equation}\label{eq:2.17}
 \|fg\|_{H^r}\le C\|f\|_{H^r}\|g\|_{H^r}.
\end{equation}

The Sobolev embeddings depend on the dimension. For every integer
\(j\ge0\),
\begin{equation}\label{eq:2.18}
 \|f\|_{W^{j,\infty}(\Omega)}\le C\|f\|_{H^r(\Omega)},
 \qquad r>j+1,
\end{equation}
whereas, on \(\mathbb T_x\),
\begin{equation}\label{eq:2.19}
 \|a\|_{W_x^{j,\infty}}\le C\|a\|_{H_x^r},
 \qquad r>j+\frac{1}{2}.
\end{equation}
Thus \(H^2(\Omega)\hookrightarrow L^\infty(\Omega)\) and
\(H^s(\Omega)\hookrightarrow W^{2,\infty}(\Omega)\) for every integer
\(s\ge4\). In one dimension, \(H_x^{s-1}\hookrightarrow
W_x^{2,\infty}\) and \(H_x^s\hookrightarrow W_x^{3,\infty}\)
for the same range of \(s\).

To control the reciprocal stratification, let \(r>1\) and
\(a_1,a_2\in H^r(\Omega)\) satisfy \(a_j\ge\kappa>0\) for
\(j=1,2\) and \(\|a_1\|_{H^r}+\|a_2\|_{H^r}\le M\).
The composition estimates and \eqref{eq:2.17} imply
\begin{equation}\label{eq:2.20}
 \|a_j^{-1}\|_{H^r}\le C,
 \qquad
 \|a_1^{-1}-a_2^{-1}\|_{H^r}\le C\|a_1-a_2\|_{H^r}.
\end{equation}
Here \(C\) depends only on \(r,\kappa,M\), and the periods.
The difference estimate also follows from
\(a_1^{-1}-a_2^{-1}=a_1^{-1}(a_2-a_1)a_2^{-1}\).

Finally, vertical averaging is bounded on Sobolev spaces:
\begin{equation}\label{eq:2.21}
 \|\overline F\|_{H_x^r}\le C\|F\|_{H^r},
 \qquad r\ge0.
\end{equation}
Since averaging commutes with horizontal derivatives, for every
integer \(r\ge0\),
\begin{equation}\label{eq:2.22}
 \|\partial_x^r\overline{FG}\|_{L_x^2}\le C\|FG\|_{H^r}.
\end{equation}
These inequalities are combined with \eqref{eq:2.14} and
\eqref{eq:2.19} to estimate the averaged fluxes.
Throughout the paper, \(C\) denotes a positive constant that may change
from line to line. Dependence on the relevant parameters and uniformity
with respect to approximation parameters are stated when needed.

\subsection{Main result}
\label{subsec:main-result}

With the notation of Subsection~\ref{subsec:preliminaries}, we now
specify the initial-data and strong-solution classes.
For every integer \(s\ge4\), define
\[
\begin{aligned}
 \mathcal Z^s:={}&
 \bigl\{(v,\theta)\in H^s\times H^s:
          \bar v=\bar\theta=0\bigr\},\\
 \mathcal X_\kappa^s:={}&
 \bigl\{(v,\theta)\in\mathcal Z^s:
          \inf_\Omega(\gamma+\partial_z\theta)\ge\kappa\bigr\},
 \qquad \kappa>0,\\
 \mathcal X^s:={}&\bigcup_{\kappa>0}\mathcal X_\kappa^s.
\end{aligned}
\]
The stratification condition is well defined pointwise by Sobolev
embedding. We endow \(\mathcal X^s\) with the relative
\(H^s\times H^s\)-topology and use the following solution class.

\begin{definition}[Strong solution]\label{def:strong-solution}
Let \(s\ge4\) be an integer and let \(T>0\).
A periodic quadruple \((v,w,\theta,p)\) is an \(H^s\)-strong solution of
\eqref{eq:1.1.2} on \([0,T]\) if
\begin{align*}
 &v,\theta\in C([0,T];H^s),
 \qquad v\in L^2(0,T;H^{s+1}),
 \\
 &\partial_t v\in L^2(0,T;H^{s-1}),
 \qquad \partial_t\theta\in C([0,T];H^{s-1}),
\end{align*}
with \(\bar v=\bar\theta=0\) and
\(\gamma+\partial_z\theta\ge\kappa\) on \([0,T]\times\Omega\) for some
\(\kappa>0\). The variables \(w\) and \(p\) are given by
\eqref{eq:1.2.1} and \eqref{eq:1.2.3}, and the horizontal momentum and
temperature equations in \eqref{eq:1.1.2} hold almost everywhere on
\((0,T)\times\Omega\). In particular,
\[
 w\in C([0,T];H^{s-1}),
 \qquad p\in C([0,T];H^s).
\]
\end{definition}

The main result is stated next.

\begin{theorem}[Local Hadamard well-posedness]
\label{thm:hadamard}
Let \(s\ge4\) be an integer and let
\(U_0=(v_0,\theta_0)\in\mathcal X^s\).
Then the following assertions hold.
\begin{enumerate}
\item[\textup{(i)}]
System~\eqref{eq:1.1.2} admits a unique maximal solution
\((v,w,\theta,p)\) on \([0,T_{\max}(U_0))\), where
\(T_{\max}(U_0)\in(0,\infty]\).
For every \(0<T<T_{\max}(U_0)\), its restriction to \([0,T]\) is an
\(H^s\)-strong solution with initial datum \((v,\theta)|_{t=0}=U_0\).
Maximality is understood within the strictly stratified strong-solution
class of Definition~\ref{def:strong-solution}.

\item[\textup{(ii)}]
The compatibility conditions \(\bar v=\bar\theta=0\) are preserved.
The diagnostic variables \(w\) and \(p\) are uniquely determined by
\eqref{eq:1.2.1} and \eqref{eq:1.2.3}, with the normalization
\eqref{eq:1.2.2}.

\item[\textup{(iii)}]
For every \(0<T<T_{\max}(U_0)\), there exists \(\kappa_T>0\) such that,
on \([0,T]\times\Omega\),
\begin{equation*}
 \gamma+\partial_z\theta\ge\kappa_T.
\end{equation*}

\item[\textup{(iv)}]
Let \(U_{0,n}\in\mathcal Z^s\) satisfy
\begin{equation*}
 U_{0,n}\longrightarrow U_0
 \qquad\text{in }H^s\times H^s.
\end{equation*}
For every \(0<T<T_{\max}(U_0)\) and all sufficiently large \(n\),
\begin{equation*}
 U_{0,n}\in\mathcal X^s,
 \qquad T<T_{\max}(U_{0,n}).
\end{equation*}
Writing \(U_n(t)=U(t;U_{0,n})\), \(U(t)=U(t;U_0)\), and denoting the
corresponding diagnostic variables by \((w_n,p_n)\) and \((w,p)\), one has
\begin{alignat*}{2}
 U_n \longrightarrow{}& U
 &\qquad& \text{in } C([0,T];H^s\times H^s),\\
 w_n \longrightarrow{}& w
 &\qquad& \text{in } C([0,T];H^{s-1}),\\
 p_n \longrightarrow{}& p
 &\qquad& \text{in } C([0,T];H^s).
\end{alignat*}
All pressures are normalized by \eqref{eq:1.2.2}.

\item[\textup{(v)}]
The maximal lifespan is lower semicontinuous. More precisely, for every
sequence as in \textup{(iv)}, there exists \(n_0\ge1\) such that
\(U_{0,n}\in\mathcal X^s\) for all \(n\ge n_0\), and
\begin{equation*}
 T_{\max}(U_0)\le\liminf_{n\to\infty}T_{\max}(U_{0,n}).
\end{equation*}
\end{enumerate}
\end{theorem}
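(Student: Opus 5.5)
The proof follows the roadmap sketched in the introduction. We work throughout with the diagnostic formulation \eqref{eq:2.1}, with \(w\) given by \eqref{eq:1.2.1} and the temperature equation in the projected form \eqref{eq:2.7}.

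\textbf{Step 1: approximation and uniform bounds.} We regularize by Friedrichs mollifiers \(J_\varepsilon\) (Fourier truncation), which commute with derivatives and with vertical averaging. The truncation is applied to the transport and diagnostic terms so that the ODE on \(J_\varepsilon H^s\) preserves \(\bar v=\bar\theta=0\). The central estimate is a weighted energy
\[
E_s(t)=\sum_{|\alpha|\le s}\Bigl(\|\partial^\alpha v\|_{L^2}^2+g\beta\int_\Omega q\,|\partial^\alpha\theta|^2\Bigr),\qquad q=R^{-1}.
\]
We treat its pieces as follows.
\begin{itemize}
\item At top order, the pressure term \(g\beta\partial_x\partial_z^{-1}\theta\) and the term \(R\,\partial^\alpha w\) in the temperature equation cancel. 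This uses \eqref{eq:2.4} and \(\partial_zw^0=-\partial_xv\).
\item The transport terms are handled by integration by parts together with \(\partial_t q\) and \((v,w)\cdot\nabla q\).
\item The commutators are bounded by \eqref{eq:2.15}--\eqref{eq:2.5}.
\item The loss from \(\partial_x^s\bar w\) is compensated: in the pairing we move one horizontal derivative onto a velocity factor of \(\overline{v\theta}\), where \(\mu_x\|\partial_x\partial^\alpha v\|^2\) absorbs it.
\end{itemize}
The stratification lower bound is propagated along characteristics. Differentiating the \(\theta\)-equation in \(z\) gives a transport equation for \(R\) with source terms bounded in \(L^\infty\) by \(\|U\|_{H^s}\), and \(s\ge4\) is used here. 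This yields \(R\ge\kappa/2\) on a time \(T_0\) depending only on \(\|U_0\|_{H^s}\) and \(\kappa\). A Gronwall bound \(E_s\le C\) then holds uniformly in \(\varepsilon\).

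\textbf{Step 2: existence, uniqueness, continuation.} We estimate differences of approximations in \(L^2\times L^2\), or in \(H^1\) where needed for the diagnostic loss, again using the weight \(q\) and the compensation argument. This gives a Cauchy sequence at low order. Interpolation then yields convergence in \(C_tH^{s'}\) for \(s'<s\), and we obtain a strong solution with \(v\in L^2H^{s+1}\). Continuity of \(U\) in time with values in \(H^s\) follows from weak continuity together with continuity of the weighted norm \(E_s\), which is equivalent to the \(H^s\) norm. The same low-order stability estimate proves uniqueness. It also gives (ii), since \(\bar v\) and \(\bar\theta\) are preserved by \eqref{eq:2.7} and \(\overline{\mathcal P_R f}=0\).

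Maximal solutions are defined by gluing. The local time depends only on \(\|U\|_{H^s}\) and \(\inf R\). Hence, if \(T_{\max}<\infty\), either \(\|U\|_{H^s}\) blows up or \(\inf R\to0\). Statement (iii) then follows from continuity of \(R\) in \(C([0,T]\times\Omega)\) on compact intervals.

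\textbf{Step 3: Bona--Smith continuous dependence; main obstacle.} Given \(U_{0,n}\to U_0\), we first note that \(U_{0,n}\in\mathcal X^s\) for large \(n\) by the embedding \(H^s\hookrightarrow W^{1,\infty}\). We mollify: \(U_0^N=J_{1/N}U_0\) and \(U_{0,n}^N\). The smoothed data satisfy
\[
\|U^N_0\|_{H^{s+1}}\lesssim N\qquad\text{and}\qquad \|U_0^N-U_0\|_{H^{s-1}}=o(N^{-1}).
\]
The key step is the one-sided comparison (Proposition~\ref{prop:one-sided-comparison}). It bounds \(\|U^N-U\|_{H^s}^2\) by
\[
\|U_0^N-U_0\|_{H^s}^2+\|U^N\|_{L_t^\infty H^{s+1}}^2\,\|U^N-U\|_{L_t^\infty H^{s-1}}^2,
\]
with a constant depending only on the \(H^s\) bounds. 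The obstacle is the difference of projections \(\mathcal P_{R^N}F^N-\mathcal P_RF\). Here we use the rearranged identity from the introduction, which places \(\partial^\alpha R^N\), not \(\partial^\alpha R\), against \(\overline{\Delta F}\). The remaining endpoint \(\partial_z\partial^\alpha\delta\theta^N\,\bar F\) is treated as a weighted quadratic form, after integrating by parts in \(z\) against \(q\,\partial^\alpha\delta\theta^N\). This is Lemma~\ref{lem:projected-commutator}. With the product \(N\cdot o(N^{-1})\to0\), the triangle inequality
\[
U_n-U=(U_n-U_n^N)+(U_n^N-U^N)+(U^N-U)
\]
gives convergence in \(C_tH^s\). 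The middle term is handled by smooth dependence at level \(H^{s+1}\) for fixed \(N\).

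To reach an arbitrary \(T<T_{\max}\), we iterate on subintervals of uniform length, using the uniform bounds on \(\inf R\) and on \(\|U\|_{H^s}\) over \([0,T]\). This gives (iv) for \(U\) and, simultaneously, \(T<T_{\max}(U_{0,n})\), which is (v). Convergence of \(w_n\) in \(C_tH^{s-1}\) and of \(p_n\) in \(C_tH^s\) follows from the formulas \eqref{eq:1.2.1}, \eqref{eq:1.2.3} together with \eqref{eq:2.10}, \eqref{eq:2.17} and \eqref{eq:2.21}.
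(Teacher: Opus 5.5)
Your proposal follows essentially the paper's route: $R^{-1}$-weighted Friedrichs energy estimates with the top-order pressure/stratification cancellation and the horizontal compensation for $\partial_x^s\bar w$; low-order stability giving uniqueness and continuation; and Bona--Smith continuity through the one-sided comparison and the rearranged projection identity of Lemma~\ref{lem:projected-commutator}, globalized by restarts, where obtaining existence from a low-order Cauchy estimate for the approximations instead of by compactness is a harmless variant. When you write it out, make explicit two ingredients that Step~3 uses but Steps~1--2 do not supply as stated, namely the tame bound $\|U(\cdot;S_NA)\|_{C_t(H^{s+1}\times H^{s+1})}\le C\|S_NA\|_{H^{s+1}\times H^{s+1}}\lesssim\langle N\rangle$ on the interval fixed by the $H^s$ data with $C$ depending only on $H^s$ bounds (Proposition~\ref{prop:tame-persistence}), and difference stability at level $H^{s-1}$ (or interpolation from your $L^2$ estimate using $\|(I-S_N)U_0\|_{L^2}=o(N^{-s})$); also note that for the truncated system the lower bound on $R$ must come from $\|\partial_t\theta_K\|_{H^{s-1}}$, since $J_K$ destroys the transport structure behind your characteristic argument.
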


%=====================================================================
% End of Section 2
%=====================================================================

%=====================================================================
% Section 3. Friedrichs approximation and uniform estimates
%=====================================================================

\section{Friedrichs approximation and uniform estimates}
\label{sec:friedrichs}

Fix an integer \(s\ge4\) and \(U_0\in\mathcal X_{\kappa_0}^s\).
We construct Fourier--Friedrichs approximations preserving the periodic
compatibility conditions. A weighted energy estimate then yields a
common existence interval without requiring an additional temperature
derivative.

Choose a real even function \(\chi\in C_c^\infty(\mathbb R^2)\) such that
\(0\le\chi\le1\), \(\chi=1\) on the unit ball, and
\(\operatorname{supp}\chi\) is contained in the ball of radius two.
For \(K\ge1\), define
\begin{equation}\label{eq:3.1.1}
 J_Kf:=\sum_{(k,m)\in\mathfrak K}
 \chi\!\left(\frac{k}{K},\frac{m}{K}\right)
 \widehat f(k,m)e^{i(kx+mz)}.
\end{equation}
The operator \(J_K\) is self-adjoint and has finite rank. It preserves
real-valuedness and commutes with derivatives, vertical averaging, and
\(\partial_z^{-1}\). It is uniformly bounded on \(H^r\),
\(r\in\mathbb R\), and on \(L^\infty\), and converges strongly to the
identity on each \(H^r\). These are the standard Fourier--Friedrichs
properties \cite{Temam2001}. Neither idempotence nor preservation of
positivity is assumed.

Set \(U_{0,K}:=J_KU_0\). By the Fourier norm \eqref{eq:2.8}, the support
of \(1-\chi(\frac{\cdot}{K})\), and \eqref{eq:2.18},
\[
 \|\partial_z(J_K-I)\theta_0\|_\infty
 \le C\|(J_K-I)\theta_0\|_{H^3}
 \le CK^{-1}\|\theta_0\|_{H^4}.
\]
Thus there exists \(K_0\ge1\), depending only on the initial
\(H^s\)-bound, \(\kappa_0^{-1}\), the periods, and \(\chi\), such that
\begin{equation}\label{eq:3.1.2}
 R_{0,K}:=\gamma+\partial_zJ_K\theta_0\ge\frac{\kappa_0}{2},
 \qquad K\ge K_0.
\end{equation}
Only these cutoffs are considered below.
For \(U_K=(v_K,\theta_K)\), set \(R_K=\gamma+\partial_z\theta_K\)
and reconstruct \(w_K^0,\bar w_K,w_K,p_K\) by
\eqref{eq:2.2} and \eqref{eq:1.2.3}. The approximate equations are
\begin{equation}\label{eq:3.1.3}
\left\{
\begin{aligned}
 &\partial_tv_K+J_K(v_K\partial_xv_K+w_K\partial_zv_K+\partial_xp_K)
       -\mu_x\partial_{xx}v_K-\mu_z\partial_{zz}v_K=0,\\
 &\partial_t\theta_K+
       J_K\mathcal P_{R_K}(v_K\partial_x\theta_K+w_K^0R_K)=0,\\
 &(v_K,\theta_K)|_{t=0}=U_{0,K}.
\end{aligned}
\right.
\end{equation}
This system defines a smooth finite-dimensional vector field on the
zero-vertical-mean subspace of \(\operatorname{Ran}J_K\), taken for both
unknowns. Indeed, on this subspace, incompressibility and the diagnostic
pressure formula imply
\[
 \overline{v_K\partial_xv_K+w_K\partial_zv_K+\partial_xp_K}
 =\partial_x\bigl(\overline{v_K^2}+\bar p_K\bigr)=0.
\]
Also, \(\overline{\mathcal P_{R_K}f}=0\) by \eqref{eq:2.6}.
Hence both components of the vector field have zero vertical mean,
and the viscous terms leave \(\operatorname{Ran}J_K\) invariant.
Finite-dimensional ODE theory therefore yields a unique maximal smooth
solution on \([0,T_K^{\max})\). The diagnostic reconstruction preserves
incompressibility, hydrostatic balance, and the pressure normalization.

We next establish estimates independent of \(K\). In the energy identities,
integer-order Sobolev norms are taken in the equivalent derivative form
of \eqref{eq:2.13}, with unit weights. Define
\begin{equation}\label{eq:3.2.1}
\begin{aligned}
 E_{s,K}:={}&\|v_K\|_{H^s}^2+
       g\beta\sum_{|\alpha|\le s}\int_\Omega
       R_K^{-1}|\partial^\alpha\theta_K|^2,\\
 D_{s,K}:={}&\mu_x\|\partial_xv_K\|_{H^s}^2
                  +\mu_z\|\partial_zv_K\|_{H^s}^2.
\end{aligned}
\end{equation}
Integrals over \(\Omega\) are with respect to \(\mathrm dx\,\mathrm dz\)
unless otherwise indicated. On any interval where \(R_K\ge\kappa>0\)
and \(\|v_K\|_{W^{2,\infty}}+\|\theta_K\|_{W^{2,\infty}}\le M\),
we have \(\kappa\le R_K\le\gamma+M\). Consequently,
\eqref{eq:2.12}--\eqref{eq:2.13} and \(\bar v_K=0\) yield
\begin{equation}\label{eq:3.2.2}
 C^{-1}\|U_K\|_{H^s\times H^s}^2\le E_{s,K}
 \le C\|U_K\|_{H^s\times H^s}^2,\qquad
 D_{s,K}\ge c\|v_K\|_{H^{s+1}}^2.
\end{equation}
Unless otherwise stated, all constants below are independent of \(K\).

The diagnostic estimates follow directly from the tools in
Section~\ref{sec:diagnostic}. If \(\bar v=\bar\theta=0\) and
\(\|v\|_{W^{2,\infty}}+\|\theta\|_{W^{2,\infty}}\le M\), then, for
every integer \(r\ge0\) for which the right-hand sides are finite,
\begin{equation}\label{eq:3.2.3}
 \|w\|_{W^{1,\infty}}\le C,\quad
 \|w^0\|_{H^r}\le C\|v\|_{H^{r+1}},\quad
 \|\bar w\|_{H_x^r}\le C(\|v\|_{H^{r+1}}+\|\theta\|_{H^{r+1}}).
\end{equation}
Indeed, \eqref{eq:2.11}, together with
\(\partial_xw^0=-\partial_z^{-1}\partial_{xx}v\) and
\(\partial_zw^0=-\partial_xv\), bounds \(w^0\) in \(W^{1,\infty}\).
The homogeneous component satisfies
\[
\|\bar w\|_{W_x^{1,\infty}}\le C\|v\theta\|_{W^{2,\infty}}\le CM^2.
\]
The second estimate in \eqref{eq:3.2.3} is \eqref{eq:2.10} at order
\(r+1\), and the last follows from
\[
 \|\bar w\|_{H_x^r}\le C\|\overline{v\theta}\|_{H_x^{r+1}}
 \le C\|v\theta\|_{H^{r+1}}
 \le C(\|v\|_{H^{r+1}}+\|\theta\|_{H^{r+1}}),
\]
using \eqref{eq:2.21} and \eqref{eq:2.14}.

The weighted energy argument requires a cutoff commutator acting on
\(H^{-1}\), rather than only on \(L^2\). We record this estimate together
with the identity for the regularized transport terms.

\begin{lemma}\label{lem:friedrichs-commutator}
Let \(a\in W^{1,\infty}(\Omega)\), \(f\in H^{-1}(\Omega)\),
\(h\in L^2(\Omega)\), and \(j\in\{x,z\}\). Then
\begin{equation}\label{eq:3.2.4}
 \|[J_K,a]f\|_{L^2}\le C\|\nabla a\|_{L^\infty}\|f\|_{H^{-1}},\qquad
 \|a\partial_jh\|_{H^{-1}}\le C\|a\|_{W^{1,\infty}}\|h\|_{L^2},
\end{equation}
with \(C\) independent of \(K\ge1\). Here
\([J_K,a]f:=J_K(af)-aJ_Kf\), and multiplication in \(H^{-1}\) is
understood by duality. If \(a,h\) are real-valued and \(h\in H^1(\Omega)\),
then
\begin{equation}\label{eq:3.2.5}
 2\int_\Omega J_K(a\partial_jh)h
 =\int_\Omega[J_K,a]\partial_jh\,h
       -\int_\Omega J_K((\partial_ja)h)h.
\end{equation}
\end{lemma}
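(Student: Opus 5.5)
The lemma has three parts, and I would prove them in the order (b), (a), (c). Part (b) is the second inequality in \eqref{eq:3.2.4}. Part (a) is the $H^{-1}\to L^2$ commutator bound. Part (c) is the identity \eqref{eq:3.2.5}.

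For (b), I would use duality. For $\phi\in H^1$ I write
\[
\int_\Omega a\,\partial_jh\,\phi=-\int_\Omega h\,\partial_j(a\phi)=-\int_\Omega h\bigl((\partial_ja)\phi+a\partial_j\phi\bigr).
\]
This gives
\[
\bigl|\langle a\partial_jh,\phi\rangle\bigr|\le C\|a\|_{W^{1,\infty}}\|h\|_{L^2}\|\phi\|_{H^1},
\]
first for smooth $h$, then for all $h\in L^2$ by density. By the same duality, multiplication by $a$ is bounded on $H^{-1}$ with norm at most $C\|a\|_{W^{1,\infty}}$, so $af$ makes sense for $f\in H^{-1}$.

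For (a), I would realize $J_K$ as a convolution on the torus. Let $\rho_K(y)=\sum_{(k,m)}\chi(k/K,m/K)e^{i(kx'+mz')}/(L_xL_z)$, with $y=(x',z')$. Since $\chi$ is Schwartz, Poisson summation identifies $\rho_K$ with the periodization of $K^2\check\chi(K\cdot)$, up to normalization. Then
\[
[J_K,a]f(X)=\int\rho_K(X-Y)\bigl(a(Y)-a(X)\bigr)f(Y)\,dY .
\]
If $f$ were in $L^2$, the bound $|a(Y)-a(X)|\le\|\nabla a\|_\infty|X-Y|$ together with $\|\,|y|\rho_K\|_{L^1}\le C/K$ would give $\|[J_K,a]f\|_{L^2}\le C K^{-1}\|\nabla a\|_\infty\|f\|_{L^2}$. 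That is the wrong pairing, so for $f\in H^{-1}$ I would instead write $f=f_0+\sum_j\partial_jg_j$ with $\|f_0\|_{L^2}+\|g_j\|_{L^2}\le C\|f\|_{H^{-1}}$; for example $g_j=\partial_j(1-\Delta)^{-1}f$ and $f_0=(1-\Delta)^{-1}f$. The $f_0$ part is handled by the $L^2$ estimate above, with $K\ge1$. For $\partial_jg_j$, I integrate by parts in $Y$:
\[
[J_K,a]\partial_jg=\int(\partial_j\rho_K)(X-Y)\bigl(a(Y)-a(X)\bigr)g(Y)\,dY-\int\rho_K(X-Y)(\partial_ja)(Y)g(Y)\,dY .
\]
The first kernel is bounded in $L^1$ uniformly in $K$, since $|y||\nabla\rho_K(y)|$ scales like $K^2\,|Ky||\nabla\check\chi|(Ky)$. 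The second term is bounded by $\|\nabla a\|_\infty\|g\|_{L^2}$. Young's inequality, or Schur's test for the periodized kernels, then gives \eqref{eq:3.2.4}.

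The main obstacle I expect is the uniform control of the periodized kernel moments on $\mathbb T^2$ with anisotropic periods. I would handle it by bounding the periodization of $K^2|Ky|^\ell|\nabla^\ell\check\chi|(Ky)$ using its rapid decay.

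For (c), self-adjointness of $J_K$ gives
\[
\int J_K(a\partial_jh)h=\int a\,\partial_jh\,J_Kh .
\]
Splitting
\[
J_K(a\partial_jh)=[J_K,a]\partial_jh+a\,\partial_jJ_Kh,
\]
I get
\[
\int J_K(a\partial_jh)h=\int[J_K,a]\partial_jh\,h+\int a\,\partial_j(J_Kh)\,h .
\]
In the last term I integrate by parts:
\[
\int a\,h\,\partial_jJ_Kh=-\int(\partial_ja)h\,J_Kh-\int a\,\partial_jh\,J_Kh .
\]
The last integral here equals $\int J_K(a\partial_jh)h$ by self-adjointness, and $\int(\partial_ja)h\,J_Kh=\int J_K((\partial_ja)h)h$. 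Adding the two expressions gives \eqref{eq:3.2.5}. Real-valuedness is used so that self-adjointness reads as a symmetric bilinear identity. The regularity $h\in H^1$, $a\in W^{1,\infty}$ justifies every integral and the integration by parts.
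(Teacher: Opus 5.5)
Your proposal is correct and follows essentially the same route as the paper. Both arguments use the kernel representation of \(J_K\) with uniform moment bounds from scaling before periodization, integrate by parts in the kernel formula, and split \(f=\phi+\sum_j\partial_j(-\partial_j\phi)\) with \(\phi=(I-\Delta)^{-1}f\). They then use duality for the second bound, and self-adjointness plus one integration by parts for \eqref{eq:3.2.5}.

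There is one sign slip. With \(f_0=(1-\Delta)^{-1}f\) you need \(g_j=-\partial_j(1-\Delta)^{-1}f\). The obstacle you flag also disappears without any decay-based summation, because periodization does not increase \(L^1\) norms and the periodic distance is bounded by the length of any lift.
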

\begin{proof}
Let \(\mathcal K_K\) be the periodized Schwartz kernel of \(J_K\), and
let \(d\) denote the periodic distance. Scaling before periodization gives
\[
 \sup_{K\ge1}\left(\|\mathcal K_K\|_{L^1}
       +\int_\Omega d(y,0)|\nabla\mathcal K_K(y)|\,\mathrm dy\right)<\infty.
\]
For smooth \(g\), integration by parts in the kernel formula yields
\[
\begin{aligned}
 {}[J_K,a]\partial_jg(\xi)={}&
 \int_\Omega\partial_j\mathcal K_K(y)
       \bigl(a(\xi-y)-a(\xi)\bigr)g(\xi-y)\,\mathrm dy\\
 &-\int_\Omega\mathcal K_K(y)(\partial_ja)(\xi-y)g(\xi-y)\,\mathrm dy,
\end{aligned}
\]
where \(\xi=(x,z)\). The kernel bounds and
\(|a(\xi-y)-a(\xi)|\le d(y,0)\|\nabla a\|_\infty\), followed by
Young's inequality, imply
\[
 \|[J_K,a]g\|_2+\sum_{j\in\{x,z\}}\|[J_K,a]\partial_jg\|_2
 \le C\|\nabla a\|_\infty\|g\|_2.
\]
This estimate extends to \(g\in L^2\) by density. For \(f\in H^{-1}\),
put \(\phi=(I-\Delta)^{-1}f\). Then
\[
 f=\phi+\sum_{j\in\{x,z\}}\partial_j(-\partial_j\phi),
 \qquad \|\phi\|_2+\|\nabla\phi\|_2\le C\|f\|_{H^{-1}}.
\]
Applying the preceding estimate to \(\phi\) and \(-\partial_j\phi\)
proves the first inequality in \eqref{eq:3.2.4}. The second follows from
\(\langle a\partial_jh,\psi\rangle=-\int_\Omega h\partial_j(a\psi)\)
for \(\psi\in H^1\). Finally, self-adjointness of \(J_K\) and integration
by parts in \(\int_\Omega a(\partial_jh)J_Kh\) prove \eqref{eq:3.2.5}.
\end{proof}

The vertically homogeneous velocity produces a separate horizontal
endpoint. We give its compensated form here, since the same expression
will be used in the difference estimates. For smooth \(a=a(x)\), \(u\),
and an integer \(m\ge3\), set
\[
 \mathcal H_\alpha[a,u]:=[\partial^\alpha,a]\partial_zu
       -\mathbf1_{\{\alpha=(m,0)\}}(\partial_x^ma)\partial_zu,
 \qquad |\alpha|\le m.
\]
The endpoint-free estimate \eqref{eq:2.5}, applied to \(a,\partial_zu\),
controls the terms in which both factors are differentiated. Any
remaining endpoint has at most \(m-1\) derivatives on \(a\). Thus
\begin{equation}\label{eq:3.2.6}
 \sum_{|\alpha|\le m}\|\mathcal H_\alpha[a,u]\|_2
 \le C\bigl(\|\partial_xa\|_{L_x^\infty}\|u\|_{H^m}
           +\|a\|_{H_x^{m-1}}\|u\|_{W^{2,\infty}}\bigr),
\end{equation}
where \(a\) is identified with its \(z\)-independent extension. The exact
decomposition is
\begin{equation}\label{eq:3.2.7}
 \partial^\alpha(a\partial_zu)=a\partial_z\partial^\alpha u
 +\mathbf1_{\{\alpha=(m,0)\}}(\partial_x^ma)\partial_zu
 +\mathcal H_\alpha[a,u].
\end{equation}
In particular, the only excluded derivative of order \(m\) on the
coefficient occurs when \(\alpha=(m,0)\).

For \(A=A(x)\), define
\(\mathcal B_m(A;u,h):=\frac{1}{\gamma}\int_{\mathbb T_x}
\partial_x^mA\,\partial_x\bigl(\int_0^{L_z}
\partial_zu\,\partial_x^mh\,\mathrm dz\bigr)\,\mathrm dx\).
With \(a=-\frac{1}{\gamma}\partial_xA\), periodic integration by
parts in \(x\) yields
\begin{equation}\label{eq:3.2.8}
 \int_\Omega(\partial_x^ma)(\partial_zu)\partial_x^mh
 =\mathcal B_m(A;u,h).
\end{equation}
By the product rule and the Cauchy--Schwarz inequality,
\begin{equation}\label{eq:3.2.9}
 |\mathcal B_m(A;u,h)|\le C\|A\|_{H_x^m}
 \bigl(\|\partial_{xz}u\|_\infty\|h\|_{H^m}
       +\|\partial_zu\|_\infty\|h\|_{H^{m+1}}\bigr).
\end{equation}
This formula defines the endpoint whenever the right-hand side is
finite. In time-dependent applications, convergence of the first factor
in \eqref{eq:3.2.8} in \(L_t^\infty L_x^2\) and of the differentiated
inner integral in \(L_t^2L_x^2\) implies convergence of the pairings in
\(L^1(0,T)\). In particular, this holds if
\(A_\ell\to A\) in \(C_tH_x^m\), \(u_\ell\to u\) in
\(C_tW^{2,\infty}\), and \(h_\ell\to h\) in
\(C_tH^m\cap L_t^2H^{m+1}\). Thus the compensated form also passes to
strong limits at the regularity used below.

We can now combine the momentum estimate with the weighted temperature
identity. The cutoff commutator is estimated before passing to the
limit, so no projection property of \(J_K\) is needed.

\begin{lemma}\label{lem:weighted-energy}
Let \([0,T]\subset[0,T_K^{\max})\). Suppose that
\(R_K\ge\kappa>0\) and
\(\|v_K\|_{W^{2,\infty}}+\|\theta_K\|_{W^{2,\infty}}\le M\)
on this interval. Then, for every \(0\le t\le T\),
\begin{equation}\label{eq:3.2.10}
 E_{s,K}(t)+c\int_0^t\|v_K\|_{H^{s+1}}^2\,\mathrm d\rho
 \le E_{s,K}(0)+C\int_0^t(1+E_{s,K})^2\,\mathrm d\rho,
\end{equation}
where \(c,C>0\) are independent of \(K\).
\end{lemma}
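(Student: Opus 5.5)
I will differentiate \eqref{eq:3.1.3} by \(\partial^\alpha\), \(|\alpha|\le s\), and pair the momentum equation with \(\partial^\alpha v_K\) and the temperature equation with \(g\beta R_K^{-1}\partial^\alpha\theta_K\). Then I sum over \(\alpha\) and integrate in time.

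\textbf{Time derivative of the weight.} It produces \(-g\beta\int\partial_tR_K\,R_K^{-2}|\partial^\alpha\theta_K|^2\). Here \(\partial_tR_K=-\partial_zJ_K\mathcal P_{R_K}(\cdots)\), and since the argument lies in \(H^{s-1}\hookrightarrow W^{1,\infty}\) for \(s\ge4\), this term is bounded by \(C(1+E_{s,K})^2\). The factor \(\|\bar w_K\|_{H^{s-1}}\) requires \(\|\theta_K\|_{H^s}\) via \eqref{eq:3.2.3}.

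\textbf{Momentum equation.} The viscous terms give \(D_{s,K}\), and \eqref{eq:3.2.2} gives \(D_{s,K}\ge c\|v_K\|_{H^{s+1}}^2\).
\begin{itemize}
\item \emph{Transport terms.} Since \(J_K\) is self-adjoint and commutes with \(\partial^\alpha\), I move it onto the test function. Using \(\|J_K\partial^\alpha v_K\|_{H^1}\le\|v_K\|_{H^{s+1}}\), all terms \(\partial^\alpha(v_K\partial_xv_K+w_K\partial_zv_K)\) are bounded in \(L^2\) by \(C(1+E)\).
\item \emph{The \(\bar w_K\) term.} Only the endpoint \((\partial_x^s\bar w_K)\partial_zv_K\) needs care. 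I treat it by \eqref{eq:3.2.7}--\eqref{eq:3.2.9} with \(A=\overline{v_K\theta_K}\), which costs \(\|A\|_{H^s_x}\|v_K\|_{H^{s+1}}\le\varepsilon D+C E^2\). The commutator \(\mathcal H_\alpha\) is controlled by \eqref{eq:3.2.6}.
\item \emph{Pressure term.} The part \(-\partial_x\overline{v_K^2}\) is harmless. The part \(g\beta\partial_x\partial_z^{-1}\theta_K\) has the same order as \(\theta_K\) and is kept for the cancellation below.
\end{itemize}

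\textbf{Temperature equation.} Write \(\mathcal P_{R_K}F_K=v_K\partial_x\theta_K+w_KR_K\) by \eqref{eq:2.3}. The cutoff \(J_K\) does not commute with the multiplication by \(R_K^{-1}\). I write
\[
\int J_K G\,R_K^{-1}\psi=\int G\,R_K^{-1}J_K\psi+\int G\,[J_K,R_K^{-1}]\psi,
\]
where \(\psi=\partial^\alpha\theta_K\).
\begin{itemize}
\item \emph{Advective part.} The top-order advective part \(v_K\partial_x\partial^\alpha\theta_K+w_K\partial_z\partial^\alpha\theta_K\) lies only in \(H^{-1}\). For it I use \eqref{eq:3.2.5} with \(a=R_K^{-1}v_K\), resp. \(R_K^{-1}w_K\), and Lemma~\ref{lem:friedrichs-commutator}. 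This requires \(a\in W^{1,\infty}\), which holds by \eqref{eq:3.2.3}.
\item \emph{Lower-order terms.} The remaining Leibniz terms are handled by \eqref{eq:2.5} and \eqref{eq:2.15}.
\item \emph{Principal coupling.} This is \(g\beta\int R_K^{-1}(R_K\partial^\alpha w^0_K)J_K\partial^\alpha\theta_K\), modulo commutators. With \(\partial_zw_K^0=-\partial_xv_K\) and hydrostatic balance, it cancels exactly against \(\int J_K\partial_x\partial^\alpha p_K\,\partial^\alpha v_K\), as in the identity of the introduction. The commutator \(\int\partial^\alpha w^0_K\,[J_K,R_K^{-1}]R_K\psi\) costs \(\|v_K\|_{H^{s+1}}\|\theta_K\|_{H^s}\). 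Terms in which derivatives fall on \(R_K\) are bounded by \(\|w^0_K\|_{L^\infty}\|\theta_K\|_{H^{s+1}}\); since \(\theta_K\) has only \(s\) derivatives, I integrate by parts in \(z\) to move one derivative onto \(w^0_K\).
\item \emph{Projection correction.} The term \(R_K\gamma^{-1}\bar F_K\) is absorbed via \eqref{eq:2.4} up to the \(J_K\) commutator, since \(\overline{\partial^\alpha\theta_K}=0\). When \(\alpha=(s,0)\), the residual \(\bar w_K\) endpoint is again handled by the compensated form \eqref{eq:3.2.8}.
\end{itemize}

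\textbf{Main obstacle.} I expect the hardest step to be making the cancellation and the \(\bar w_K\) endpoint coexist with the \(J_K\) commutators. This is exactly why the \(H^{-1}\) version of \eqref{eq:3.2.4} is needed: it lets me estimate \([J_K,R_K^{-1}]\) applied to top-order quantities.

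After Young's inequality absorbs \(\varepsilon D_{s,K}\), I obtain
\[
\frac{d}{dt}E_{s,K}+cD_{s,K}\le C(1+E_{s,K})^2,
\]
and integrating in time gives \eqref{eq:3.2.10}.
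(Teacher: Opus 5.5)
Your proposal follows the paper's proof: you use the same test functions $\partial^\alpha v_K$ and $g\beta R_K^{-1}\partial^\alpha\theta_K$, cancel the hydrostatic pressure against the principal coupling exactly at each finite cutoff by self-adjointness of $J_K$, and control the leftover with the $H^{-1}$ commutator bound \eqref{eq:3.2.4} (your commutator with $R_K^{-1}$ plays the role of the paper's $[J_K,q]$ and $[J_K,R_K]$ in \eqref{eq:3.2.16}); you also treat the momentum $\bar w_K$ endpoint with \eqref{eq:3.2.8}--\eqref{eq:3.2.9} and bound the weight derivative through the equation, as the paper does. Two small corrections: the top-order momentum transport terms are not bounded in $L^2$ by $C(1+E_{s,K})$, only their pairings are, after absorbing $\|v_K\|_{H^{s+1}}$ into $D_{s,K}$; and the temperature equation has no residual $\bar w_K$ endpoint for $\alpha=(s,0)$ needing \eqref{eq:3.2.8}, because $R_K\partial^\alpha\bar w_K$ pairs to zero with the mean-zero $J_K\partial^\alpha\theta_K$ and only the $H^{-1}$ commutator remains (in that setting \eqref{eq:3.2.9} would require an $H^{s+1}$-norm of $\theta_K$ anyway).
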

\begin{proof}
We suppress the subscript \(K\) on the unknowns and write \(q=R^{-1}\).
The finite-dimensional solution and its diagnostic variables are smooth,
so all the following identities are justified before any estimates are
applied.

We first test the differentiated momentum equation by
\(\partial^\alpha v\), for \(|\alpha|\le s\). The hydrostatic pressure
satisfies
\begin{equation}\label{eq:3.2.11}
 g\beta\int_\Omega J_K\partial_x\partial_z^{-1}\partial^\alpha\theta\,
              \partial^\alpha v
 =-g\beta\int_\Omega J_K\partial^\alpha\theta\,\partial^\alpha w.
\end{equation}
Indeed, \(\partial_x\partial_z^{-1}\) is self-adjoint, commutes with
\(J_K\), and maps \(v\) to \(-w^0\). The homogeneous components of
\(p\) and \(w\) have zero pairing with the corresponding differentiated
velocity and temperature, since both have zero vertical mean.
The principal transports are bounded by \(CE_{s,K}\) using
\eqref{eq:3.2.5} and \eqref{eq:3.2.3}. For the commutators,
\eqref{eq:2.15}, \eqref{eq:3.2.3}, and \eqref{eq:3.2.6} imply
\[
 \sum_{|\alpha|\le s}\bigl(\|[\partial^\alpha,v]\partial_xv\|_2
       +\|[\partial^\alpha,w^0]\partial_zv\|_2
       +\|\mathcal H_\alpha[\bar w,v]\|_2\bigr)
 \le C\bigl(E_{s,K}^{\frac{1}{2}}+\|v\|_{H^{s+1}}\bigr).
\]
Here \(\|w^0\|_{H^s}\le C\|v\|_{H^{s+1}}\), whereas
\(\|\bar w\|_{H_x^{s-1}}\) requires only the \(H^s\)-norms of
\(v,\theta\). Pairing with \(J_K\partial^\alpha v\), using
\eqref{eq:3.2.2}, and applying Young's inequality bounds these terms by
\(\eta D_{s,K}+C_\eta E_{s,K}\).

It remains to estimate the horizontal endpoint. Put
\(A=\overline{v\theta}\). By \eqref{eq:2.14} and \eqref{eq:2.21},
\(\|A\|_{H_x^s}\le CE_{s,K}^{\frac{1}{2}}\). In
\eqref{eq:3.2.8}, take \(m=s\), \(u=v\), and \(h=J_Kv\).
Then \eqref{eq:3.2.9} yields
\[
 |\mathcal B_s(A;v,J_Kv)|
 \le CE_{s,K}^{\frac{1}{2}}
       \bigl(E_{s,K}^{\frac{1}{2}}+\|v\|_{H^{s+1}}\bigr)
 \le\eta D_{s,K}+C_\eta E_{s,K}.
\]
Thus, after combining the momentum terms and relabelling \(\eta\),
\begin{equation}\label{eq:3.2.12}
 \frac{1}{2}\frac{\mathrm d}{\mathrm dt}\|v\|_{H^s}^2+D_{s,K}
 -g\beta\sum_{|\alpha|\le s}\int_\Omega J_K\partial^\alpha\theta\,\partial^\alpha w
 \le\eta D_{s,K}+C_\eta(1+E_{s,K})^2.
\end{equation}

We next use \eqref{eq:2.3} to write the temperature equation as
\(\partial_t\theta+J_K(v\partial_x\theta+wR)=0\). For
\(h_\alpha=\partial^\alpha\theta\), the Leibniz expansion is
\begin{equation}\label{eq:3.2.13}
 \partial^\alpha(v\partial_x\theta+wR)
 =v\partial_xh_\alpha+w\partial_zh_\alpha
                 +R\partial^\alpha w+\mathcal R_\alpha,
\end{equation}
where \(\mathcal R_0=-w\partial_z\theta\), and, for \(|\alpha|\ge1\),
\begin{equation}\label{eq:3.2.14}
 \mathcal R_\alpha=[\partial^\alpha,v]\partial_x\theta+
 \sum_{\substack{0<\nu\le\alpha\\\nu\ne\alpha}}
 \binom\alpha\nu(\partial^\nu w)\partial^{\alpha-\nu}R.
\end{equation}
The mixed sum contains at most \(s-1\) derivatives of each factor.
Using \eqref{eq:2.5} with \(f=w\), \(g=R-\gamma\), and
\eqref{eq:2.15} for the horizontal commutator, we obtain
\[
\begin{aligned}
 \sum_{|\alpha|\le s}\|\mathcal R_\alpha\|_2\le{}&
 C\bigl(\|v\|_{H^s}+\|\theta\|_{H^s}
       +\|\nabla w\|_\infty\|R-\gamma\|_{H^{s-1}}\\
 &\hspace{2.5em}+\|\nabla R\|_\infty\|w\|_{H^{s-1}}\bigr)
 \le CE_{s,K}^{\frac{1}{2}}.
\end{aligned}
\]
The zeroth-order remainder obeys the same bound by
\(\|w\partial_z\theta\|_2\le\|w\|_\infty\|\theta\|_{H^1}\).
Testing the temperature equation by \(g\beta qh_\alpha\) gives
\begin{equation}\label{eq:3.2.15}
\begin{aligned}
 &\frac{g\beta}{2}\frac{\mathrm d}{\mathrm dt}\int_\Omega q|h_\alpha|^2
 +g\beta\int_\Omega qJ_K(R\partial^\alpha w)h_\alpha\\
 ={}&-g\beta\int_\Omega qJ_K(v\partial_xh_\alpha+w\partial_zh_\alpha)h_\alpha
      -g\beta\int_\Omega qJ_K\mathcal R_\alpha h_\alpha
      +\frac{g\beta}{2}\int_\Omega\partial_tq\,|h_\alpha|^2.
\end{aligned}
\end{equation}
The assumptions imply \(\|q\|_{W^{1,\infty}}\le C\).
For \((a,j)=(v,x)\) or \((w,z)\), use the exact identity
\[
 qJ_K(a\partial_jh)=J_K(aq\partial_jh)-[J_K,q](a\partial_jh).
\]
Lemma~\ref{lem:friedrichs-commutator} then yields
\[
 \left|\int_\Omega qJ_K(a\partial_jh)h\right|
 \le C\bigl(\|\nabla(aq)\|_\infty
       +\|\nabla q\|_\infty\|a\|_{W^{1,\infty}}\bigr)\|h\|_2^2.
\]
Together with the remainder bound, this controls the first two terms
on the right-hand side of \eqref{eq:3.2.15}, summed over \(\alpha\), by
\(CE_{s,K}\).

The derivative of the weight is estimated from the approximate
temperature equation itself. By \eqref{eq:2.14} and \eqref{eq:3.2.3},
\[
 \|\partial_t\theta\|_{H^3}
 \le C\bigl(\|v\|_{H^3}+\|\theta\|_{H^4}+\|w\|_{H^3}\bigr)
 \le CE_{s,K}^{\frac{1}{2}}.
\]
Since \(\partial_tq=-R^{-2}\partial_z\partial_t\theta\),
\eqref{eq:2.18} with \(r=2\), \(j=0\) implies
\[
 \|\partial_tq\|_\infty\le C\|\partial_t\theta\|_{H^3},
 \qquad
 \sum_{|\alpha|\le s}\left|\int_\Omega\partial_tq\,|h_\alpha|^2\right|
 \le CE_{s,K}^{\frac{3}{2}}.
\]
Only four derivatives of \(\theta\) are used here, so the estimate
holds at the lowest index \(s=4\).

Finally, the principal coupling has the decomposition
\begin{equation}\label{eq:3.2.16}
 R^{-1}J_K(R\partial^\alpha w)
 =J_K\partial^\alpha w+R^{-1}[J_K,R]\partial^\alpha w.
\end{equation}
By self-adjointness, the first term cancels the pressure coupling
in \eqref{eq:3.2.12} at every finite cutoff:
\[
 -\int_\Omega J_Kh_\alpha\,\partial^\alpha w
 +\int_\Omega h_\alpha J_K\partial^\alpha w=0.
\]
For the remaining term, \eqref{eq:3.2.4} and the diagnostic estimates give
\[
\begin{aligned}
 \sum_{|\alpha|\le s}\left|\int_\Omega
       q[J_K,R]\partial^\alpha w\,h_\alpha\right|
 \le{}&C\|\nabla R\|_\infty
       \sum_{|\alpha|\le s}\|\partial^\alpha w\|_{H^{-1}}\|h_\alpha\|_2\\
 \le{}&C\|w\|_{H^{s-1}}\|\theta\|_{H^s}
 \le CE_{s,K}.
\end{aligned}
\]
Adding the momentum and temperature identities, choosing \(\eta\)
sufficiently small, and using
\(E_{s,K}^{\frac{3}{2}}\le(1+E_{s,K})^2\), we conclude that
\[
 \frac{\mathrm d}{\mathrm dt}E_{s,K}+cD_{s,K}
 \le C(1+E_{s,K})^2.
\]
Integration and \eqref{eq:3.2.2} prove \eqref{eq:3.2.10}.
The additional derivative in the momentum estimate is controlled by the
velocity dissipation. No \(H^{s+1}\)-norm of the temperature is required.
\end{proof}

The weighted estimate yields a common lifespan once the energy and the
lower stratification bound are controlled simultaneously. The following
proposition supplies the uniform bounds used in the compactness argument.

\begin{proposition}\label{prop:uniform-friedrichs}
There exist \(K_0\ge1\) and \(T_*,C_*>0\), depending only on \(s\),
an upper bound for \(\|U_0\|_{H^s\times H^s}\), \(\kappa_0^{-1}\),
the periods, the physical parameters, and \(\chi\), such that
\(T_K^{\max}>T_*\) for every \(K\ge K_0\). On \([0,T_*]\times\Omega\),
\begin{equation}\label{eq:3.3.1}
 \sup_{0\le t\le T_*}E_{s,K}(t)
       +\int_0^{T_*}\|v_K\|_{H^{s+1}}^2\,\mathrm dt
 \le C_*,\qquad R_K\ge\frac{\kappa_0}{4}.
\end{equation}
\end{proposition}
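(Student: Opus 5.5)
We run a continuity (bootstrap) argument on the quantities controlled by Lemma~\ref{lem:weighted-energy}. Fix \(K\ge K_0\), with \(K_0\) from \eqref{eq:3.1.2}. Let \(M_0\) bound \(\|U_0\|_{H^s\times H^s}\). Since \(J_K\) is uniformly bounded on \(H^s\), we have \(\|U_{0,K}\|_{H^s\times H^s}\le CM_0\). Together with \(R_{0,K}\ge\kappa_0/2\) and \(R_{0,K}\le\gamma+CM_0\), the weighted equivalence \eqref{eq:3.2.2} at \(t=0\) gives \(E_{s,K}(0)\le E_0\), with \(E_0\) depending only on \(M_0,\kappa_0^{-1}\) and the fixed parameters.

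Define \(T_K^\sharp\) as the supremum of \(t<T_K^{\max}\) such that, on \([0,t]\),
\[
 R_K\ge\kappa_0/4,\qquad \|U_K\|_{H^s\times H^s}^2\le B.
\]
The constant \(B\) is chosen below from \(E_0\) and \(\kappa_0\) only. It is convenient to bootstrap on the unweighted norm, because \eqref{eq:3.2.2} with \(\kappa=\kappa_0/4\) then gives \(E_{s,K}\simeq\|U_K\|^2\) with constants fixed in advance. Sobolev embedding \eqref{eq:2.18} gives \(\|v_K\|_{W^{2,\infty}}+\|\theta_K\|_{W^{2,\infty}}\le C\sqrt B=:M\). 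So the hypotheses of Lemma~\ref{lem:weighted-energy} hold with \(\kappa=\kappa_0/4\) and this \(M\), and all constants depend only on the admissible data. By continuity of the smooth finite-dimensional solution, \(T_K^\sharp>0\) once \(B\) exceeds \(C_1E_0\) with a margin, where \(C_1\) is the equivalence constant in \eqref{eq:3.2.2} for \(\kappa=\kappa_0/4\). Here we use \(\kappa_0/2>\kappa_0/4\) at \(t=0\).

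On \([0,T_K^\sharp)\), the inequality \eqref{eq:3.2.10} and a Gronwall/ODE comparison for \(y'=C(1+y)^2\) give
\[
 E_{s,K}(t)\le 2(1+E_0)
 \quad\text{for } t\le T_1:=\frac{1}{2C(1+E_0)}.
\]
The dissipation integral is bounded by the same quantity. Choosing \(B:=4C_1(1+E_0)\) improves the norm bound to \(B/2\) on \([0,\min(T_1,T_K^\sharp))\).

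The stratification bound is recovered from the approximate temperature equation. As in the proof of Lemma~\ref{lem:weighted-energy}, \eqref{eq:2.14} and \eqref{eq:3.2.3} give
\[
 \|\partial_z\partial_t\theta_K\|_\infty\le C\|\partial_t\theta_K\|_{H^3}\le C\sqrt B.
\]
Hence \(R_K(t)\ge\kappa_0/2-Ct\sqrt B\ge3\kappa_0/8\) for \(t\le T_2:=\kappa_0/(8C\sqrt B)\). This is the only place where a lower bound on \(R_K\) is needed without relying on any positivity property of \(J_K\). It works because we control \(\partial_t R_K\) in \(L^\infty\) directly, rather than arguing through a maximum principle, which fails for the projected and regularized equation.

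Set \(T_*:=\frac12\min(T_1,T_2)\). Both bootstrap conditions then hold strictly on \([0,\min(T_*,T_K^\sharp))\), so continuity forces \(T_K^\sharp>T_*\). Moreover, the \(H^s\) bound excludes blow-up of the finite-dimensional ODE, which blows up only if the norm does. Therefore \(T_K^{\max}>T_*\), and \eqref{eq:3.3.1} follows with \(C_*:=2(1+E_0)+C(1+E_0)\).

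The main delicacy is ordering the constants so the argument is not circular. The constants \(c,C\) in \eqref{eq:3.2.10} depend on \(M\) and \(\kappa\), hence on \(B\). So \(B\) must be fixed first, from \(E_0\) and the equivalence constants for \(\kappa_0/4\), and only afterwards can \(T_1\) and \(T_2\) be chosen. The bound on \(E_{s,K}\) must use the equivalence constant for \(\kappa_0/4\), not \(\kappa_0/2\); with this order the improvement \(B/2<B\) is consistent.
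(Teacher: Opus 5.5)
Your structure is the paper's: a bootstrap on a norm bound together with $R_K\ge\kappa_0/4$, Lemma~\ref{lem:weighted-energy} to control $E_{s,K}$, and a direct $L^\infty$ bound on $\partial_t\partial_z\theta_K$ from the approximate temperature equation to keep $R_K$ positive. You then close with ODE continuation. The one real difference is that you bootstrap on the unweighted norm $\|U_K\|_{H^s\times H^s}^2\le B$, whereas the paper bootstraps on the weighted energy $E_{s,K}<2E_0$. Your constant bookkeeping breaks exactly there.

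\textbf{The gap.} The step ``choosing $B:=4C_1(1+E_0)$'' uses the lower half of \eqref{eq:3.2.2}, namely $\|U_K\|_{H^s\times H^s}^2\le C_1E_{s,K}$. This inequality is not controlled by $\kappa=\kappa_0/4$ alone. Because the weight is $R_K^{-1}$, it needs an \emph{upper} bound on $R_K$, and $C_1$ grows like $\|R_K\|_\infty$; in \eqref{eq:3.2.2} this enters through $R_K\le\gamma+M$. Inside your bootstrap the only available bound is $\|R_K\|_\infty\le\gamma+C\sqrt B$. Hence $C_1=C_1(B)$, and your definition of $B$ is circular, at precisely the point you singled out as non-circular. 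If $C_1$ grew linearly in $B$, the improvement $\|U_K\|^2\le B/2$ could fail outright.

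\textbf{The repair.} The argument can be saved because the dependence is sublinear:
$\|U_K\|_{H^s\times H^s}^2\le C(1+\|R_K\|_\infty)E_{s,K}\le C(1+\gamma+C\sqrt B)(1+E_0)$,
where $C$ is independent of $B$. So any $B\ge C'(1+E_0)^2$ with $C'$ large gives the improvement to $B/2$. Only after that should you fix $M=C\sqrt B$, the constants in \eqref{eq:3.2.10}, and then $T_1,T_2$. The bound $E_{s,K}\le 2(1+E_0)$ from the Riccati comparison does not depend on those constants; only $T_1$ does.

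This is exactly the paper's key step. It bootstraps on $E_{s,K}<2E_0$ and recovers the unweighted norm without any a priori upper bound on $R_K$, via the absorption
$\|\theta_K\|_{H^s}^2\le CE_0\|R_K\|_\infty\le CE_0(\gamma+C_s\|\theta_K\|_{H^s})\le\frac12\|\theta_K\|_{H^s}^2+C(1+E_0)^2$.

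With this fix, the rest of your proposal goes through: the comparison argument, the choice of $T_2$ from $\partial_tR_K\in L^\infty$, and the continuation. When closing, handle the case $T_K^\sharp=T_K^{\max}\le T_*$ by ODE continuation before invoking continuity at $T_K^\sharp$.
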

\begin{proof}
Choose \(K_0\) as in \eqref{eq:3.1.2}, and take \(E_0\ge1\) with
\(E_{s,K}(0)\le E_0\) for every \(K\ge K_0\). Such a bound depends
only on the stated parameters. Consider the maximal interval starting
at zero on which
\(R_K>\frac{\kappa_0}{4}\) and \(E_{s,K}<2E_0\).

We first recover the unweighted norm without assuming an upper bound
for \(R_K\). By \eqref{eq:2.13}, the definition of \(E_{s,K}\), and
\eqref{eq:2.18},
\[
 \|\theta_K\|_{H^s}^2
 \le CE_0\|R_K\|_\infty
 \le CE_0\bigl(\gamma+C_s\|\theta_K\|_{H^s}\bigr)
 \le\frac{1}{2}\|\theta_K\|_{H^s}^2+C(1+E_0)^2.
\]
Hence \(\|U_K\|_{H^s\times H^s}\le C(1+E_0)\).
Choose \(M\) larger than the resulting \(W^{2,\infty}\)-bound from
\eqref{eq:2.18}. This choice is independent of \(K\), so
Lemma~\ref{lem:weighted-energy} applies with
\(\kappa=\frac{\kappa_0}{4}\) throughout the bootstrap interval.
In particular,
\[
 E_{s,K}(t)\le E_0+C_Et(1+2E_0)^2.
\]
The temperature equation, \eqref{eq:2.14}, and \eqref{eq:3.2.3} also imply
\[
 \|\partial_t\theta_K\|_{H^{s-1}}
 \le C\bigl(\|v_K\|_{H^{s-1}}+\|\theta_K\|_{H^s}
                      +\|w_K\|_{H^{s-1}}\bigr)\le C_\theta.
\]
Since \(s-2\ge2\), \eqref{eq:2.18} and \eqref{eq:3.1.2} yield
\[
 R_K(t)\ge R_{0,K}-\|\partial_z(\theta_K(t)-\theta_K(0))\|_\infty
 \ge\frac{\kappa_0}{2}-C_sC_\theta t.
\]
Taking \(C_E,C_\theta\ge1\), choose
\[
 T_*:=\min\left\{1,\frac{E_0}{2C_E(1+2E_0)^2},
                      \frac{\kappa_0}{8C_sC_\theta}\right\}.
\]
For \(t\le T_*\) in the bootstrap interval, these estimates improve the
bounds to \(E_{s,K}(t)\le\frac{3E_0}{2}\) and
\(R_K(t)\ge\frac{3\kappa_0}{8}\). Continuity therefore prevents the
bootstrap interval from ending before
\(\min\{T_*,T_K^{\max}\}\). If \(T_K^{\max}\le T_*\), the uniform
\(H^s\)-bound keeps the finite-dimensional solution bounded up to its
maximal time. The ODE continuation theorem then gives an extension,
a contradiction. Thus \(T_K^{\max}>T_*\), and \eqref{eq:3.2.10}
proves \eqref{eq:3.3.1}.
\end{proof}

%=====================================================================
% End of Section 3
%=====================================================================

%=====================================================================
% Section 4. Local existence, stability, and continuation
%=====================================================================

\section{Local existence, stability, and continuation}
\label{sec:existence}

Fix an integer \(s\ge4\). We first construct a local strong solution
from the approximations of Section~\ref{sec:friedrichs}, then prove
lower-order stability and continuation. Since the temperature has no
diffusion, its strong time continuity is established separately.
Throughout this section, \(w,p\) are reconstructed by
\eqref{eq:1.2.1} and \eqref{eq:1.2.3}. Integer-order norms in energy
identities are taken in the derivative form of \eqref{eq:2.13}.

\subsection{Local existence by compactness}

We first record the convergence properties of the Fourier cutoffs
and the associated weighted energy identity.

\begin{lemma}\label{lem:regularized-identities}
Let \(J_L\) be defined by \eqref{eq:3.1.1}. For every \(r\in\mathbb R\),
\(J_Lf\to f\) in \(C([0,T];H^r)\) whenever
\(f\in C([0,T];H^r)\), and in \(L^p(0,T;H^r)\) whenever
\(f\in L^p(0,T;H^r)\), \(1\le p<\infty\).
If \(a\in L^\infty(0,T;W^{1,\infty})\) and
\(f\in L^p(0,T;H^{-1})\), then
\begin{equation}\label{eq:4.1.1}
 [J_L,a]f\longrightarrow0\qquad\text{in }L^p(0,T;L^2),\quad 1\le p<\infty.
\end{equation}
The corresponding time-independent convergence also holds.
Let \(m\ge0\) be an integer, \(q\in W^{1,1}(0,T;L^\infty)\), and
\(h\in L^\infty(0,T;H^m)\cap W^{1,1}(0,T;H^{m-1})\).
Then \(E_L[q,h]:=\sum_{|\alpha|\le m}\int_\Omega
q|J_L\partial^\alpha h|^2\) is absolutely continuous, with
\begin{equation}\label{eq:4.1.2}
 \frac{\mathrm d}{\mathrm dt}E_L[q,h]=2\sum_{|\alpha|\le m}\int_\Omega qJ_L\partial^\alpha\partial_th\,J_L\partial^\alpha h
 +\sum_{|\alpha|\le m}\int_\Omega\partial_tq|J_L\partial^\alpha h|^2
\end{equation}
for almost every time. If also \(h\in C([0,T];H^m)\), then
\(E_L[q,h]\to\sum_{|\alpha|\le m}\int_\Omega q|\partial^\alpha h|^2\)
uniformly on \([0,T]\).
\end{lemma}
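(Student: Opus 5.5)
The lemma has three parts, and I will prove them in order: convergence of $J_L$ (including the commutator convergence \eqref{eq:4.1.1}), the derivative formula \eqref{eq:4.1.2}, and the uniform convergence of $E_L[q,h]$. The common tool is a uniform-bound-plus-density argument, using the Fourier definition \eqref{eq:3.1.1} together with Lemma~\ref{lem:friedrichs-commutator}.

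\emph{Convergence of $J_L$.} Since $\sup_L\|J_L\|_{H^r\to H^r}\le1$ and $J_Lf\to f$ in $H^r$ for each fixed $f$, the convergence is uniform on compact subsets of $H^r$ by equicontinuity. For $f\in C([0,T];H^r)$ the image $f([0,T])$ is compact, which gives convergence in $C([0,T];H^r)$. For $f\in L^p(0,T;H^r)$ with $p<\infty$, I apply dominated convergence to $\|(J_L-I)f(t)\|_{H^r}^p\le 2^p\|f(t)\|_{H^r}^p$.

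\emph{Commutator convergence \eqref{eq:4.1.1}.} By \eqref{eq:3.2.4} we have the uniform bound
\[
\|[J_L,a(t)]f(t)\|_2\le C\|\nabla a\|_{L^\infty_tL^\infty}\|f(t)\|_{H^{-1}}.
\]
So it suffices to establish pointwise-in-time convergence and then use dominated convergence in $L^p$. For fixed $t$, approximate $f(t)$ in $H^{-1}$ by some $g\in L^2$. For such $g$,
\[
[J_L,a]g=(J_L-I)(ag)-a(J_L-I)g\to0\quad\text{in }L^2,
\]
because $ag\in L^2$ and $a\in L^\infty$. The uniform bound controls the error $f(t)-g$, so $[J_L,a(t)]f(t)\to0$ in $L^2$. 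The time-independent statement is the same argument without the time integration.

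\emph{Derivative formula \eqref{eq:4.1.2}.} Since $J_L$ has finite rank, $J_L\partial^\alpha$ is bounded from $H^{m-1}$ to every $H^k$ with a norm depending on $L$. Hence $h\in W^{1,1}(0,T;H^{m-1})$ gives $J_L\partial^\alpha h\in W^{1,1}(0,T;L^\infty\cap L^2)$, with derivative $J_L\partial^\alpha\partial_th$. The integrand $q|J_L\partial^\alpha h|^2$ is then a product of $W^{1,1}$-in-time functions with values in $L^\infty$ and $L^2$. The product rule for vector-valued Sobolev functions in one time variable (via mollification in $t$, or via the absolutely continuous representatives) gives absolute continuity and \eqref{eq:4.1.2} almost everywhere. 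I expect this to be the only step needing care: the bilinear pairing $(q,u,u)\mapsto\int qu^2$ has to be checked to be continuous on $L^\infty\times L^2\times L^2$, which makes the chain rule legitimate.

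\emph{Uniform convergence of $E_L[q,h]$.} If $h\in C([0,T];H^m)$, then $\partial^\alpha h\in C([0,T];L^2)$, and the first part gives $J_L\partial^\alpha h\to\partial^\alpha h$ in $C([0,T];L^2)$. Also $q\in W^{1,1}(0,T;L^\infty)\subset C([0,T];L^\infty)$ is bounded. The difference of the quadratic forms is therefore bounded by
\[
\|q\|_{L^\infty_{t,x}}\bigl(\|J_L\partial^\alpha h\|_{C_tL^2}+\|\partial^\alpha h\|_{C_tL^2}\bigr)\|(J_L-I)\partial^\alpha h\|_{C_tL^2}\to0,
\]
which proves the final assertion.
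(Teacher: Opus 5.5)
Your proposal is correct and follows the paper's proof step for step. You use compactness of the trajectory with uniform boundedness of $J_L$ for the first assertion, the uniform commutator bound \eqref{eq:3.2.4} with density for \eqref{eq:4.1.1}, finite rank with the product rule for \eqref{eq:4.1.2}, and the same quadratic-form estimate for the uniform convergence of $E_L[q,h]$. The only difference is cosmetic: you run the density argument for \eqref{eq:4.1.1} pointwise in time and conclude by dominated convergence, whereas the paper approximates $f$ in $L_t^pH^{-1}$ by functions in $L_t^pL^2$ and applies the first assertion directly in $L_t^pL^2$.
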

\begin{proof}
Uniform boundedness of \(J_L\) and strong convergence on \(H^r\)
imply uniform convergence on the compact range of any continuous
\(H^r\)-valued trajectory. The \(L^p\)-convergence follows by dominated
convergence. For \eqref{eq:4.1.1}, choose
\(f_j\in L_t^pL^2\) converging to \(f\) in \(L_t^pH^{-1}\).
By \eqref{eq:3.2.4},
\[
 \|[J_L,a](f-f_j)\|_{L_t^pL^2}
 \le C\|\nabla a\|_{L_t^\infty L^\infty}
          \|f-f_j\|_{L_t^pH^{-1}},
\]
uniformly in \(L\). For fixed \(j\), the identity
\[
 [J_L,a]f_j=(J_L-I)(af_j)-a(J_L-I)f_j
\]
and the first assertion imply convergence to zero in \(L_t^pL^2\).
Taking \(L\to\infty\) and then \(j\to\infty\) proves
\eqref{eq:4.1.1}. The same argument applies without the time variable.

For fixed \(L\), finite rank implies
\(J_L\partial^\alpha h\in W^{1,1}(0,T;L^2)\).
The product rule therefore proves \eqref{eq:4.1.2}.
If \(h\in C_tH^m\), the first assertion and \eqref{eq:2.13} yield
\[
 \sup_{t\in[0,T]}\left|E_L[q,h](t)
       -\sum_{|\alpha|\le m}\int_\Omega q|\partial^\alpha h|^2\right|
 \le C\|q\|_{L_t^\infty L^\infty}
       \|(J_L-I)h\|_{C_tH^m}\|h\|_{C_tH^m}\longrightarrow0.
\]
\end{proof}

We next recover strong time continuity of the temperature from its
equation, without assuming this continuity in the energy argument.

\begin{proposition}\label{prop:temperature-continuity}
Let
\(v\in C([0,T];H^s)\cap L^2(0,T;H^{s+1})\) and
\(\theta\in L^\infty(0,T;H^s)\cap W^{1,\infty}(0,T;H^{s-1})\).
Assume that \(\bar v=\bar\theta=0\),
\(R:=\gamma+\partial_z\theta\ge\kappa>0\), and the temperature
equation in \eqref{eq:1.1.2} holds in distributions, with \(w\)
given by \eqref{eq:1.2.1}.
Then \(\theta\in C([0,T];H^s)\), and its \(R^{-1}\)-weighted
\(H^s\)-energy is continuous on \([0,T]\).
\end{proposition}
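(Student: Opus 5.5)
The plan is the standard argument for transport equations: weak continuity combined with continuity of a weighted norm gives strong continuity. Since \(\theta\in W^{1,\infty}(0,T;H^{s-1})\), we have \(\theta\in C([0,T];H^{s-1})\). Together with \(\theta\in L^\infty(0,T;H^s)\), this makes \(\theta\) weakly continuous in \(H^s\), and every \(\theta(t)\) lies in \(H^s\) with a uniform bound. The same bound holds for \(R=\gamma+\partial_z\theta\) in \(C([0,T];H^{s-2})\subset C([0,T];W^{1,\infty})\). The time derivative \(\partial_tR=\partial_z\partial_t\theta\) lies in \(L^\infty(0,T;H^{s-2})\subset L^\infty(0,T;L^\infty)\), since \(s-2\ge2\). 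Hence \(q=R^{-1}\in W^{1,\infty}(0,T;L^\infty)\) with \(q\ge(\gamma+C)^{-1}>0\) and \(q\le\kappa^{-1}\). I will therefore show that \(\mathcal E(t):=\sum_{|\alpha|\le s}\int_\Omega q|\partial^\alpha\theta|^2\) is continuous on \([0,T]\). Weak continuity of \(\partial^\alpha\theta\) in \(L^2\), strong continuity of \(q\) in \(L^\infty\), and uniform positivity of \(q\) then give \(\partial^\alpha\theta(t_n)\to\partial^\alpha\theta(t)\) strongly in \(L^2\). To see this, expand \(\int q(t_n)|\partial^\alpha\theta(t_n)-\partial^\alpha\theta(t)|^2\) and use the fact that the weighted norm is equivalent to the \(L^2\) norm. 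This yields \(\theta\in C([0,T];H^s)\).

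To prove continuity of \(\mathcal E\), regularize with \(J_L\) and apply Lemma~\ref{lem:regularized-identities} with \(m=s\) and \(h=\theta\). This gives that \(E_L[q,\theta]\) is absolutely continuous, with derivative \eqref{eq:4.1.2}. I then rewrite the temperature equation, via \eqref{eq:2.3}, as \(\partial_t\theta=-(v\partial_x\theta+wR)\) and differentiate using \eqref{eq:3.2.13}. Write \(\phi_\alpha:=J_L\partial^\alpha\theta\). Each transport term is decomposed as \(J_L(a\partial_j h_\alpha)=a\partial_j\phi_\alpha+[J_L,a]\partial_jh_\alpha\). The piece \(\int qa\partial_j\phi_\alpha\,\phi_\alpha\) integrates by parts to \(-\frac12\int\partial_j(qa)|\phi_\alpha|^2\). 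The commutator is bounded in \(L^2\) by Lemma~\ref{lem:friedrichs-commutator}, since \(\partial_jh_\alpha\in H^{-1}\) and \(v,w\in W^{1,\infty}\) by \eqref{eq:3.2.3}. For the principal term \(J_L(R\partial^\alpha w)\), I use \eqref{eq:3.2.16}, giving \(qJ_L(R\partial^\alpha w)=J_L\partial^\alpha w+q[J_L,R]\partial^\alpha w\). This is where the assumption \(v\in L^2(0,T;H^{s+1})\) enters. We have \(w^0\in L^2_tH^s\), while \(\bar w\) requires care, because \(\partial_x^s\bar w\) needs \(s+1\) horizontal derivatives of \(\overline{v\theta}\). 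I will handle it with the compensated form \eqref{eq:3.2.8}--\eqref{eq:3.2.9}, moving one \(x\)-derivative onto the inner integral. This produces a bound involving \(\|v\|_{H^{s+1}}\|\theta\|_{H^s}^2\), which is integrable in time. The remainders \(\mathcal R_\alpha\) are bounded in \(L^2_tL^2\) as in Lemma~\ref{lem:weighted-energy}.

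Altogether, \(\frac{\mathrm d}{\mathrm dt}E_L[q,\theta]=G_L(t)\), where \(G_L\) is bounded in \(L^1(0,T)\) uniformly in \(L\) by \(C(1+\|v\|_{H^{s+1}})\). Moreover \(G_L\) converges in \(L^1(0,T)\) to a limit \(G\), because the commutator terms tend to zero by \eqref{eq:4.1.1} and the remaining terms converge by Lemma~\ref{lem:regularized-identities} and the convergence remark following \eqref{eq:3.2.9}. For each fixed \(t\), \(E_L[q,\theta](t)\to\mathcal E(t)\), since \(J_L\partial^\alpha\theta(t)\to\partial^\alpha\theta(t)\) in \(L^2\) for every \(t\) and \(q\) is bounded. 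Passing to the limit in \(E_L(t)=E_L(0)+\int_0^tG_L\) gives \(\mathcal E(t)=\mathcal E(0)+\int_0^tG\), so \(\mathcal E\) is absolutely continuous.

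I expect the main obstacle to be the top-order term \(qJ_L(R\partial^\alpha w)\,\phi_\alpha\). Unlike in Lemma~\ref{lem:weighted-energy}, no momentum pressure term is available here to cancel it. It must therefore be bounded directly and in a form that converges as \(L\to\infty\). For \(w^0\) this is straightforward via \(L^2_tH^s\). For \(\alpha=(s,0)\) and the \(\bar w\) part, I first try the identity \(\int_\Omega\partial_x^s\bar w\,\phi_\alpha=\int_{\mathbb T_x}\partial_x^s\bar w\,\overline{\phi_\alpha}\,L_z\), which vanishes because \(\overline{\phi_\alpha}=0\). What remains is the commutator piece \(q[J_L,R]\partial^\alpha\bar w\). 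Lemma~\ref{lem:friedrichs-commutator} controls it with only \(\|\bar w\|_{H^{s-1}_x}\), and it tends to zero by \eqref{eq:4.1.1}. This reduction avoids the extra derivative entirely.
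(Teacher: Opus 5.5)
Your final argument is essentially the paper's proof. It runs through weak continuity in \(H^s\), the \(J_L\)-regularized \(q\)-weighted energy from Lemma~\ref{lem:regularized-identities}, and commutator handling of the transport terms. It then uses the split \eqref{eq:3.2.16} of the principal coupling, where the \(\bar w\) part vanishes against \(\overline{J_L\partial^\alpha\theta}=0\) and the \(w^0\) part is bounded by \(\|v\|_{H^{s+1}}\), and closes with weak plus norm convergence. The paper passes to the limit through the \(L\)-uniform bound on \(\partial_tE_L[q,\theta]\) rather than \(L^1\)-convergence of \(G_L\), which is an equivalent variant.

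Two corrections. First, delete the intermediate appeal to the compensated form \eqref{eq:3.2.8}--\eqref{eq:3.2.9}: with \(h=\theta\), the bound \eqref{eq:3.2.9} would require \(\|\theta\|_{H^{s+1}}\). Second, for \(s=4\) one only has \(R\in C_tL^\infty\cap L_t^\infty W^{1,\infty}\), not \(C_tW^{1,\infty}\); this is still all that \eqref{eq:3.2.4} and \eqref{eq:4.1.1} require.
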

\begin{proof}
The time regularity supplies a representative in \(C_tH^{s-1}\).
At any fixed time, approximate by times at which the essential
\(H^s\)-bound holds. Weak compactness in \(H^s\) and strong
continuity in \(H^{s-1}\) identify the weak limit with this
representative, so the same \(H^s\)-bound holds at every time.
Testing against finite Fourier sums and then using density yields
\(\theta\in C_w([0,T];H^s)\).

Set \(q=R^{-1}\). The assumed regularity of \(\theta\), together with
\eqref{eq:2.18} and \eqref{eq:2.20}, implies that
\(q\in C_tL^\infty\cap L_t^\infty W^{1,\infty}
\cap W_t^{1,\infty}L^\infty\), with
\(0<c\le q\le\kappa^{-1}\).
The diagnostic estimates \eqref{eq:3.2.3} also yield
\begin{equation}\label{eq:4.1.3}
 \|w\|_{L_t^\infty(H^{s-1}\cap W^{1,\infty})}\le C.
\end{equation}
In particular,
\(q_t=-R^{-2}\partial_z\theta_t\), and
\(\|q_t\|_{L_t^\infty L^\infty}
\le C\|\theta_t\|_{L_t^\infty H^{s-1}}\), since \(s-2>1\).
All constants in this proof are independent of \(N\).

For \(h_\alpha=\partial^\alpha\theta\), define
\(E_{\theta,N}:=g\beta\sum_{|\alpha|\le s}\int_\Omega
q|J_Nh_\alpha|^2\).
The differentiated temperature equation has the form
\[
 \partial_th_\alpha+v\partial_xh_\alpha+w\partial_zh_\alpha
       +R\partial^\alpha w+\mathcal R_\alpha=0
       \qquad\text{in }H^{-1},
\]
where \(\mathcal R_\alpha\) is given by
\eqref{eq:3.2.13}--\eqref{eq:3.2.14}.
Equations \eqref{eq:2.5}, \eqref{eq:2.15}, and
\eqref{eq:3.2.3} imply
\(\sum_{|\alpha|\le s}\|\mathcal R_\alpha\|_2\le C\).
Applying \(J_N\) to this equation and using
Lemma~\ref{lem:regularized-identities} is legitimate before strong
\(H^s\)-continuity has been established.

For \((a,j)=(v,x)\) or \((w,z)\), the transport pairing is
\[
 \int_\Omega qJ_N(a\partial_jh_\alpha)J_Nh_\alpha
 =-\frac{1}{2}\int_\Omega\partial_j(qa)|J_Nh_\alpha|^2
 +\int_\Omega q[J_N,a]\partial_jh_\alpha\,J_Nh_\alpha.
\]
It is bounded uniformly in \(N\) by \eqref{eq:3.2.4} and
\eqref{eq:4.1.3}. The same is true of the remainder and weight-derivative
terms. For the principal coupling, \eqref{eq:3.2.16} yields
\[
 \int_\Omega qJ_N(R\partial^\alpha w)J_Nh_\alpha
 =\int_\Omega J_N\partial^\alpha w^0\,J_Nh_\alpha
 +\int_\Omega q[J_N,R]\partial^\alpha w\,J_Nh_\alpha.
\]
Here the homogeneous component has zero pairing, because
\(\overline{J_Nh_\alpha}=0\).
The first term is bounded by \(C\|v\|_{H^{s+1}}\) using
\eqref{eq:2.10}. The second is bounded uniformly by
\eqref{eq:3.2.4}, since
\(\sum_{|\alpha|\le s}\|\partial^\alpha w\|_{H^{-1}}
\le C\|w\|_{H^{s-1}}\).
Thus
\[
|\partial_tE_{\theta,N}|\le C(1+\|v\|_{H^{s+1}}^2)
\]
almost everywhere, and
\begin{equation}\label{eq:4.1.4}
 |E_{\theta,N}(t)-E_{\theta,N}(t')|
 \le C\int_{\min\{t,t'\}}^{\max\{t,t'\}}
       (1+\|v(\rho)\|_{H^{s+1}}^2)\,\mathrm d\rho.
\end{equation}
At each fixed time, \(J_N\theta(t)\to\theta(t)\) in \(H^s\).
Passing to the limit in \eqref{eq:4.1.4} therefore proves continuity of
\(E_\theta(t):=g\beta\sum_{|\alpha|\le s}\int_\Omega
q(t)|\partial^\alpha\theta(t)|^2\).
If \(t_n\to t\), then \(\theta(t_n)\rightharpoonup\theta(t)\) in
\(H^s\) and \(q(t_n)\to q(t)\) in \(L^\infty\). Consequently,
\[
 \sum_{|\alpha|\le s}\int_\Omega q(t)|\partial^\alpha\theta(t_n)|^2
 \longrightarrow
 \sum_{|\alpha|\le s}\int_\Omega q(t)|\partial^\alpha\theta(t)|^2.
\]
By \eqref{eq:4.1.3} and \eqref{eq:2.13}, this is convergence of norms
in a fixed equivalent Hilbert norm on \(H^s\).
Together with weak convergence, it yields
\(\theta(t_n)\to\theta(t)\) in \(H^s\).
\end{proof}

We now pass to the limit in the approximate equations. We retain the
dependence of the local time and bounds on the initial norm and the
stratification lower bound, since the same constants will be used at
restart times.

\begin{proposition}\label{prop:local-existence}
For every \(M\ge1\) and \(\kappa>0\), there exist
\(\tau_*(M,\kappa)>0\) and \(M_*(M,\kappa)\ge1\) such that every
\(U_0\in\mathcal X_\kappa^s\) with
\(\|U_0\|_{H^s\times H^s}\le M\) admits an \(H^s\)-strong solution
on \([0,\tau_*]\) satisfying
\begin{equation}\label{eq:4.1.5}
 \sup_{0\le t\le\tau_*}\|U(t)\|_{H^s\times H^s}
       +\|v\|_{L^2(0,\tau_*;H^{s+1})}\le M_*,
 \qquad R\ge\frac{\kappa}{4}.
\end{equation}
The constants depend only on \(M,\kappa,s\), the periods, and the
physical parameters.
\end{proposition}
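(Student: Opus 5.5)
The plan is to pass to the limit in the Friedrichs scheme \eqref{eq:3.1.3} using Proposition~\ref{prop:uniform-friedrichs}. Given \(U_0\in\mathcal X_\kappa^s\) with \(\|U_0\|_{H^s\times H^s}\le M\), apply that proposition with \(\kappa_0=\kappa\). It produces \(K_0\), \(T_*\), and \(C_*\), depending only on \(M,\kappa,s\), the periods, the physical parameters, and \(\chi\). Set \(\tau_*:=T_*\).

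By \eqref{eq:3.3.1} and the unweighted recovery in its proof, the family \(U_K\) is bounded in \(L^\infty_tH^s\) and \(v_K\) is bounded in \(L^2_tH^{s+1}\), uniformly in \(K\ge K_0\). Moreover, \(R_K\ge\kappa/4\). From the equations, \eqref{eq:3.2.3}, and \eqref{eq:2.14}, we also get that \(\partial_tv_K\) is bounded in \(L^2_tH^{s-1}\) and \(\partial_t\theta_K\) is bounded in \(L^\infty_tH^{s-1}\). For the momentum equation this uses \(\|w_K\partial_zv_K\|_{H^{s-1}}\le C(1+\|v_K\|_{H^s})\). By Aubin--Lions, a subsequence converges strongly in \(C_tH^{s-1}\) for both unknowns, and strongly in \(L^2_tH^{s}\) for \(v\). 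Weak-\(*\) limits preserve the \(L^\infty_tH^s\) bound, the \(L^2_tH^{s+1}\) bound, and the time-derivative bounds. The mean conditions \(\bar v=\bar\theta=0\) pass to the limit. Since \(s-1\ge3\), \eqref{eq:2.18} gives uniform convergence of \(\partial_z\theta_K\), so \(R\ge\kappa/4\) holds in the limit.

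Next I pass to the limit in the nonlinear terms. Strong \(C_tH^{s-1}\) convergence of \((v_K,\theta_K)\) implies convergence of \(w_K^0\) in \(C_tH^{s-2}\) and of \(\bar w_K\) in \(C_tH^{s-2}_x\), by \eqref{eq:2.10} and \eqref{eq:2.21}. By \eqref{eq:1.2.3}, \(p_K\) converges in \(C_tH^{s-1}\). Products therefore converge in \(C_tH^{s-3}\), with \(s-3\ge1\). For the cutoffs, \(J_K\to I\) strongly on each \(H^r\), uniformly on compact trajectories, by the first assertion of Lemma~\ref{lem:regularized-identities}. The projection \(\mathcal P_{R_K}f_K\to\mathcal P_Rf\) because \(R_K\to R\) uniformly. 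Hence the limit satisfies \eqref{eq:2.1}, in the form \eqref{eq:2.7}, in distributions, and by the equivalence stated in Subsection~\ref{subsec:preliminaries} it satisfies \eqref{eq:1.1.2} a.e., since every term lies in \(L^2_tH^{s-1}\).

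For the time regularity, \(v\in L^2_tH^{s+1}\) with \(\partial_tv\in L^2_tH^{s-1}\) gives \(v\in C_tH^s\) by the Lions--Magenes interpolation lemma. For the temperature, \(\theta\in L^\infty_tH^s\cap W^{1,\infty}_tH^{s-1}\), so Proposition~\ref{prop:temperature-continuity} applies and yields \(\theta\in C_tH^s\). Then \(\partial_t\theta\in C_tH^{s-1}\) follows from the equation, because the right-hand side is continuous into \(H^{s-1}\) once \(v,\theta\in C_tH^s\). The diagnostic regularities \(w\in C_tH^{s-1}\) and \(p\in C_tH^s\) follow from \eqref{eq:3.2.3} and \eqref{eq:1.2.3}. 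Finally, \eqref{eq:4.1.5} holds with \(M_*\) given by \(C_*\) and the equivalence \eqref{eq:3.2.2}.

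The main obstacle is the step of recovering strong \(H^s\) continuity of the nondiffusive temperature, since compactness alone gives only weak continuity. This is handled by Proposition~\ref{prop:temperature-continuity}. The other step requiring care is checking that \(\mathcal P_{R_K}\) and \(J_K\) pass to the limit at the lower regularity \(H^{s-3}\), which is what makes the convergence argument for the nonlinear terms work.
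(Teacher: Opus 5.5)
Your proposal is correct and follows essentially the same route as the paper. You use the uniform bounds of Proposition~\ref{prop:uniform-friedrichs} with \(\kappa_0=\kappa\), time-derivative bounds with compactness in \(C_tH^{s-1}\), and passage to the limit in the cut-off nonlinearities; then Lions--Magenes for \(v\), Proposition~\ref{prop:temperature-continuity} for \(\theta\), and lower semicontinuity for \eqref{eq:4.1.5}. The differences are only cosmetic: you pass \(\mathcal P_{R_K}\) to the limit directly instead of first rewriting it via \eqref{eq:2.3}, and you use convergence in \(C_tH^{s-3}\) where \(C_tH^{s-2}\) is available. You should also state the initial condition \(U(0)=U_0\) explicitly; it follows from \(U_K(0)=J_KU_0\to U_0\) and the \(C_tH^{s-1}\) convergence.
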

\begin{proof}
By Proposition~\ref{prop:uniform-friedrichs}, the approximations satisfy
uniform bounds in \(L_t^\infty(H^s\times H^s)\), with
\(v_K\) bounded in \(L_t^2H^{s+1}\) and \(R_K\ge\frac{\kappa}{4}\)
on a common interval \([0,\tau_*]\).
The equations, \eqref{eq:2.10}, \eqref{eq:2.17},
\eqref{eq:2.21}, and \eqref{eq:3.2.3} also imply
\[
 \|\partial_tv_K\|_{L_t^2H^{s-1}}
       +\|\partial_t\theta_K\|_{L_t^\infty H^{s-1}}\le C.
\]
In particular,
\[
\begin{aligned}
 \|v_K(t)-v_K(t')\|_{H^{s-1}}\le{}& C|t-t'|^{\frac{1}{2}},\\
 \|\theta_K(t)-\theta_K(t')\|_{H^{s-1}}\le{}& C|t-t'|.
\end{aligned}
\]
The compact embedding \(H^s\hookrightarrow H^{s-1}\) and the
Arzel\`a--Ascoli theorem yield compactness in \(C_tH^{s-1}\), as in
the criteria of \cite{Simon1987}. After extracting a subsequence,
\begin{equation}\label{eq:4.1.6}
 U_K\longrightarrow U\quad\text{in }C_t(H^{s-1}\times H^{s-1}),
 \qquad v_K\rightharpoonup v\quad\text{in }L_t^2H^{s+1}.
\end{equation}
The sequence also converges weakly-* in the bounded
\(L_t^\infty H^s\) spaces, and the time-derivative bounds pass to the
limit. The mean constraints are preserved. Moreover,
\(\partial_z\theta_K\to\partial_z\theta\) in
\(C_tH^{s-2}\hookrightarrow C_tL^\infty\) by \eqref{eq:2.18}, so
\(R\ge\frac{\kappa}{4}\).

The diagnostic formulas and \eqref{eq:2.10}, \eqref{eq:2.17},
\eqref{eq:2.21} yield
\(w_K\to w\) in \(C_tH^{s-2}\) and
\(p_K\to p\) in \(C_tH^{s-1}\).
Since \(s-2>1\), the algebra estimate \eqref{eq:2.17} implies
convergence of the uncut nonlinear terms in \(C_tH^{s-2}\).
The projected temperature nonlinearity is treated in its equivalent
form \(v_K\partial_x\theta_K+w_KR_K\), given by \eqref{eq:2.3}.
For any such term \(G_K\to G\),
\[
 \|J_KG_K-G\|_{C_tH^{s-2}}
 \le C\|G_K-G\|_{C_tH^{s-2}}+
       \|(J_K-I)G\|_{C_tH^{s-2}}\longrightarrow0,
\]
by Lemma~\ref{lem:regularized-identities}.
The viscous terms converge weakly in \(L_t^2H^{s-1}\).
Hence \eqref{eq:1.1.2} holds in distributions, and
\(U(0)=U_0\) follows from \eqref{eq:4.1.6} and
\(J_KU_0\to U_0\).

The Lions--Magenes lemma for
\(H^{s+1}\hookrightarrow H^s\hookrightarrow H^{s-1}\)
\cite{LionsMagenes1972} implies \(v\in C_tH^s\).
Proposition~\ref{prop:temperature-continuity} then yields
\(\theta\in C_tH^s\).
The diagnostic formulas and \eqref{eq:2.10}, \eqref{eq:2.17},
\eqref{eq:2.21} yield \(w\in C_tH^{s-1}\) and \(p\in C_tH^s\).
The temperature equation now implies
\(\partial_t\theta\in C_tH^{s-1}\).
Thus the limit is a strong solution in the sense of
Definition~\ref{def:strong-solution}.
Lower semicontinuity and the preceding continuity establish
\eqref{eq:4.1.5}. All bounds depend only on the initial norm bound and
stratification lower bound, as asserted.
\end{proof}

\subsection{Lower-order stability and continuation}

We next establish a stability estimate one derivative below the
solution regularity.
Let \(\sigma\ge4\) be an integer, put \(r=\sigma-1\), and let
\(U_i=(v_i,\theta_i)\), \(i=1,2\), be two \(H^\sigma\)-strong
solutions on \([0,T]\) such that
\begin{equation}\label{eq:4.2.1}
 \sup_{0\le t\le T}\|U_i(t)\|_{H^\sigma\times H^\sigma}\le M,
 \qquad R_i:=\gamma+\partial_z\theta_i\ge\kappa>0,
 \qquad i=1,2.
\end{equation}
Write \(\delta U=U_1-U_2\), use the same convention for the diagnostic
variables, and set
\(\mathcal T_i:=\partial_t+v_i\partial_x+w_i\partial_z\).
Subtracting the equations yields
\begin{equation}\label{eq:4.2.2}
\left\{
\begin{aligned}
 &\mathcal T_1\delta v-\mu_x\partial_{xx}\delta v-\mu_z\partial_{zz}\delta v
       +\partial_x\delta p=-\delta v\partial_xv_2-\delta w\partial_zv_2,\\
 &\mathcal T_1\delta\theta+R_2\delta w=-\delta v\partial_x\theta_2.
\end{aligned}
\right.
\end{equation}
The diagnostic difference is
\[
 \delta w=\delta w^0+\delta\bar w,
 \quad\delta w^0=-\partial_z^{-1}\partial_x\delta v,
 \quad\delta\bar w=-\frac{1}{\gamma}\partial_x
       \overline{\delta v\theta_1+v_2\delta\theta}.
\]
Equations \eqref{eq:2.10}, \eqref{eq:2.17}, \eqref{eq:2.18}, and
\eqref{eq:2.21} imply
\begin{equation}\label{eq:4.2.3}
 \|\delta w\|_{H^{r-1}}+\|\delta w\|_\infty
       \le C\|\delta U\|_{H^r\times H^r},
 \qquad\|\delta w^0\|_{H^r}\le C\|\delta v\|_{H^{r+1}}.
\end{equation}
Set \(X_r:=\|\delta U\|_{H^r\times H^r}\) and
\[
 E_{\delta,r}:=\|\delta v\|_{H^r}^2+
 g\beta\sum_{|\alpha|\le r}\int_\Omega R_2^{-1}
       |\partial^\alpha\delta\theta|^2.
\]
By \eqref{eq:2.13}, \eqref{eq:2.18}, and \eqref{eq:4.2.1},
\(C^{-1}X_r(t)^2\le E_{\delta,r}(t)\le CX_r(t)^2\)
uniformly for \(t\in[0,T]\).
The following proposition establishes a stability estimate for the
difference of two solutions. Uniqueness follows when their initial
data coincide.

\begin{proposition}\label{prop:lower-order-stability}
Under \eqref{eq:4.2.1}, for every \(0\le t\le T\),
\begin{equation}\label{eq:4.2.4}
 E_{\delta,r}(t)+c\int_0^t\|\delta v\|_{H^{r+1}}^2\,\mathrm d\rho
 \le E_{\delta,r}(0)+C\int_0^t E_{\delta,r}\,\mathrm d\rho.
\end{equation}
Consequently,
\begin{equation}\label{eq:4.2.5}
 \|\delta U\|_{C([0,T];H^r\times H^r)}^2
       +\|\delta v\|_{L^2(0,T;H^{r+1})}^2
 \le C_T\|\delta U(0)\|_{H^r\times H^r}^2.
\end{equation}
All constants are uniform for solutions satisfying \eqref{eq:4.2.1},
and \(c,C\) are independent of \(T\).
In particular, the two solutions coincide on \([0,T]\) if their
initial data agree.
\end{proposition}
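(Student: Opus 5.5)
We prove \eqref{eq:4.2.4} by a weighted energy estimate for the difference system \eqref{eq:4.2.2}, mirroring Lemma~\ref{lem:weighted-energy} one derivative lower. The weight is \(q_2:=R_2^{-1}\). Since the data are only \(H^\sigma\)-strong, we work with \(J_L\)-regularized energies, using Lemma~\ref{lem:regularized-identities} and the commutator bound \eqref{eq:4.1.1}. This justifies the differentiated identities, and we let \(L\to\infty\) at the end. Note that \(r=\sigma-1\ge3\). Every coefficient \(v_i,\theta_i,w_i,R_i,q_2\) then lies in \(W^{2,\infty}\) or \(W^{1,\infty}\), with bounds depending on \(M,\kappa\). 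By \eqref{eq:2.20}, \eqref{eq:3.2.3}, and the temperature equation, \(\partial_tq_2\in L^\infty\).

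\textbf{Momentum part.} For \(|\alpha|\le r\), apply \(\partial^\alpha\) to the first line of \eqref{eq:4.2.2} and test with \(\partial^\alpha\delta v\). The dissipation gives \(c\|\delta v\|_{H^{r+1}}^2\) via \eqref{eq:2.12}. The transport \(v_1\partial_x+w_1\partial_z\) is antisymmetric up to \(\|\nabla(v_1,w_1)\|_\infty\). Its commutators are handled by \eqref{eq:2.15} and by \eqref{eq:3.2.6}--\eqref{eq:3.2.9} for the \(\bar w_1\)-endpoint. In these estimates \(\|w_1^0\|_{H^r}\le C\|v_1\|_{H^{r+1}}\le CM\) and \(\|\bar w_1\|_{H^{r}_x}\le CM\), which is fine because \(r+1=\sigma\).

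The right-hand side \(\delta v\partial_xv_2+\delta w\partial_zv_2\) needs \(r\) derivatives of \(\delta w\). Here the top-order \(\delta w^0\) costs \(\|\delta v\|_{H^{r+1}}\), which is absorbed by dissipation. We avoid estimating \(\partial^\alpha\delta\bar w\) in \(L^2\) directly. Instead we pair it with \(\partial^\alpha\delta v\) and integrate one \(x\)-derivative by parts onto \(\partial^\alpha\delta v\), exactly as in \eqref{eq:3.2.8}. This costs \(\|\overline{\delta v\theta_1+v_2\delta\theta}\|_{H^r_x}\|\delta v\|_{H^{r+1}}\le CX_r\|\delta v\|_{H^{r+1}}\).

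The pressure \(\partial_x\delta p\) contains \(g\beta\partial_x\partial_z^{-1}\delta\theta\) and a homogeneous part. As in \eqref{eq:3.2.11}, it contributes \(-g\beta\int\partial^\alpha\delta\theta\,\partial^\alpha\delta w^0\). The homogeneous part vanishes against the zero-mean \(\partial^\alpha\delta v\).

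\textbf{Temperature part.} Test \(\partial^\alpha\) of the second line with \(g\beta q_2\partial^\alpha\delta\theta\). The transport by \(\mathcal T_1\) with weight \(q_2\) produces \(\int(\mathcal T_1q_2)|\partial^\alpha\delta\theta|^2\), which is bounded by \(CX_r^2\) since \(q_2\in W^{1,\infty}\) and \(\partial_tq_2\in L^\infty\). Commutators \([\partial^\alpha,v_1]\partial_x\delta\theta\) and \([\partial^\alpha,w_1]\partial_z\delta\theta\) are handled by \eqref{eq:2.15}, using \(\|w_1\|_{H^r}\le C\) for \(r\le\sigma-1\). The term \(\partial^\alpha(\delta v\partial_x\theta_2)\) is bounded by \(CX_r\) through the tame estimate \eqref{eq:2.14}, using \(\theta_2\in H^{r+1}\).

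The principal coupling is
\[
 \partial^\alpha(R_2\delta w)=R_2\partial^\alpha\delta w+[\partial^\alpha,R_2]\delta w .
\]
After weighting, \(q_2R_2\partial^\alpha\delta w=\partial^\alpha\delta w\). Its \(\delta w^0\) piece cancels the pressure term from the momentum part. Its \(\delta\bar w\) piece pairs to zero with \(\partial^\alpha\delta\theta\), which has zero vertical mean. The commutator needs \(\|R_2\|_{H^r}\|\delta w\|_\infty+\|\nabla R_2\|_\infty\|\delta w\|_{H^{r-1}}\). Since \(\|R_2\|_{H^r}\le CM\), \eqref{eq:4.2.3} bounds this by \(CX_r\).

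Summing the two parts, using \(X_r^2\simeq E_{\delta,r}\), and applying Young's inequality gives
\[
 \frac{\mathrm d}{\mathrm dt}E_{\delta,r}+c\|\delta v\|_{H^{r+1}}^2\le CE_{\delta,r}.
\]
Integrating yields \eqref{eq:4.2.4}. Gronwall's inequality and the norm equivalence give \eqref{eq:4.2.5} with \(C_T=Ce^{CT}\), and uniqueness follows.

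\textbf{Main obstacle.} The delicate step is the top-order \(\delta\bar w\) term in the momentum equation, together with justifying the regularized identities at the limited regularity. For the first, we rely on the horizontal compensation \eqref{eq:3.2.8}--\eqref{eq:3.2.9}, which transfers the derivative onto \(\delta v\), where dissipation is available. For the second, we check that every weighted and regularized pairing converges via Lemma~\ref{lem:regularized-identities}, and that the remaining commutators fall under \eqref{eq:4.1.1}.
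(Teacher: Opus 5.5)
Your proposal is correct and follows the paper's argument in all essentials. It uses the $R_2^{-1}$-weighted energy at order $r$, cancels the hydrostatic pressure against the principal temperature coupling (with the homogeneous parts pairing to zero), bounds the commutators via \eqref{eq:2.15} and \eqref{eq:4.2.3}, and closes with Gronwall. The differences are minor: the paper handles the whole forcing $G$, including $\delta\bar w$, by the single bound $\|G\|_{H^{r-1}}\le CX_r$ and one integration by parts onto $\partial^\alpha\delta v$, instead of your compensated endpoint \eqref{eq:3.2.8}--\eqref{eq:3.2.9}; and no $J_L$ regularization is needed, since at order $r=\sigma-1$ the solutions have one spare derivative and every pairing is already classical.
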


\begin{proof}
We apply \(\partial^\alpha\) to the first equation in
\eqref{eq:4.2.2}, test against \(\partial^\alpha\delta v\), and sum
over \(|\alpha|\le r\).
The hydrostatic relation implies
\[
 \sum_{|\alpha|\le r}\int_\Omega
       \partial^\alpha\partial_x\delta p\,\partial^\alpha\delta v
 =-g\beta\sum_{|\alpha|\le r}\int_\Omega
       \partial^\alpha\delta\theta\,\partial^\alpha\delta w.
\]
This is \eqref{eq:3.2.11} without cutoff, applied to the difference.
The homogeneous pressure and velocity components have zero pairing
with \(\partial^\alpha\delta v\) and
\(\partial^\alpha\delta\theta\), respectively.
The principal transport vanishes because
\(\partial_xv_1+\partial_zw_1=0\).
By \eqref{eq:2.15}, \eqref{eq:2.18}, and \eqref{eq:3.2.3},
\[
 \sum_{|\alpha|\le r}\bigl(
 \|[\partial^\alpha,v_1]\partial_x\delta v\|_2+
 \|[\partial^\alpha,w_1]\partial_z\delta v\|_2\bigr)\le CX_r.
\]
Indeed, \(w_1\in H^r\cap W^{1,\infty}\) is controlled by the
\(H^{r+1}\)-bound for \(U_1\), and
\(\|\nabla\delta v\|_\infty\le CX_r\) since \(r\ge3\).

For the forcing
\(G:=\delta v\partial_xv_2+\delta w\partial_zv_2\),
\eqref{eq:2.17} and \eqref{eq:4.2.3} imply
\[
 \|G\|_{H^{r-1}}\le C\bigl(
 \|\delta v\|_{H^{r-1}}\|\partial_xv_2\|_{H^{r-1}}+
 \|\delta w\|_{H^{r-1}}\|\partial_zv_2\|_{H^{r-1}}\bigr)
 \le CX_r.
\]
After one integration by parts in each positive-order pairing,
\[
 \sum_{|\alpha|\le r}\left|
 \langle\partial^\alpha G,\partial^\alpha\delta v\rangle
 \right|\le C\|G\|_{H^{r-1}}\|\delta v\|_{H^{r+1}}
 \le \eta\|\delta v\|_{H^{r+1}}^2+C_\eta X_r^2.
\]
Here the brackets denote the \(H^{-1}\)--\(H^1\) duality extending the
\(L^2\) pairing. This estimate treats the full diagnostic difference
\(\delta w\), including its homogeneous component, without an
\(H^r\)-bound for that difference.
The Poincar\'e estimate \eqref{eq:2.12} and a sufficiently small choice
of \(\eta\) now yield
\begin{equation}\label{eq:4.2.6}
\begin{aligned}
 \frac{1}{2}\frac{\mathrm d}{\mathrm dt}\|\delta v\|_{H^r}^2
 +c\|\delta v\|_{H^{r+1}}^2
 -g\beta\sum_{|\alpha|\le r}\int_\Omega
       \partial^\alpha\delta\theta\,\partial^\alpha\delta w
 \le{}&CE_{\delta,r}.
\end{aligned}
\end{equation}
Applying \(\partial^\alpha\) to the second equation in
\eqref{eq:4.2.2}, for \(|\alpha|\le r\), yields
\[
 \mathcal T_1\partial^\alpha\delta\theta
 +R_2\partial^\alpha\delta w+\mathcal R_\alpha^\delta=0,
\]
where
\[
 \mathcal R_\alpha^\delta:=
 [\partial^\alpha,v_1]\partial_x\delta\theta
 +[\partial^\alpha,w_1]\partial_z\delta\theta
 +\partial^\alpha(\delta v\partial_x\theta_2)
 +[\partial^\alpha,R_2]\delta w.
\]
The first three terms have total \(L^2\)-norm at most \(CX_r\) by
\eqref{eq:2.14}--\eqref{eq:2.15} and \eqref{eq:2.18}.
In particular, only \(\|\nabla\delta\theta\|_\infty\le CX_r\)
is used, not a \(W^{2,\infty}\)-bound for the difference.
For \(|\alpha|\ge1\), expand the remaining commutator as
\[
 [\partial^\alpha,R_2]\delta w
 =(\partial^\alpha R_2)\delta w+
 \sum_{\substack{0<\nu\le\alpha\\\nu\ne\alpha}}
 \binom\alpha\nu(\partial^\nu R_2)
                         \partial^{\alpha-\nu}\delta w.
\]
The endpoint is bounded by
\(\|R_2\|_{H^r}\|\delta w\|_\infty\le CX_r\).
For each term in the sum, put
\(a=r-|\nu|\) and \(b=r-1-|\alpha-\nu|\).
Then \(a,b\ge0\) and
\(a+b=2r-1-|\alpha|\ge r-1>1\). Hence \eqref{eq:2.16} implies
\[
 \|(\partial^\nu R_2)\partial^{\alpha-\nu}\delta w\|_2
 \le C\|R_2\|_{H^r}\|\delta w\|_{H^{r-1}}\le CX_r.
\]
The zero-order commutator vanishes. Thus
\(\sum_{|\alpha|\le r}\|\mathcal R_\alpha^\delta\|_2\le CX_r\)
for every \(r\ge3\).

Set \(q_2=R_2^{-1}\). Differentiating the temperature equation for
\(U_2\) in \(z\) and using incompressibility yields
\[
 \mathcal T_1q_2=-q_2\partial_xv_2
 +q_2^2(\partial_zv_2)\partial_x\theta_2
 +(\delta v\partial_x+\delta w\partial_z)q_2.
\]
Equations \eqref{eq:2.18}, \eqref{eq:2.20}, and \eqref{eq:4.2.3}
therefore imply \(\|\mathcal T_1q_2\|_\infty\le C\).
Testing the differentiated temperature equation against
\(g\beta q_2\partial^\alpha\delta\theta\) and summing over
\(|\alpha|\le r\), we obtain
\[
\begin{aligned}
 &\frac{g\beta}{2}\frac{\mathrm d}{\mathrm dt}
       \sum_{|\alpha|\le r}\int_\Omega
       q_2|\partial^\alpha\delta\theta|^2
 +g\beta\sum_{|\alpha|\le r}\int_\Omega
       \partial^\alpha\delta w\,\partial^\alpha\delta\theta\\
 ={}&\frac{g\beta}{2}\sum_{|\alpha|\le r}\int_\Omega
       (\mathcal T_1q_2)|\partial^\alpha\delta\theta|^2
 -g\beta\sum_{|\alpha|\le r}\int_\Omega
       q_2\mathcal R_\alpha^\delta\,\partial^\alpha\delta\theta\\
 \le{}&CE_{\delta,r}.
\end{aligned}
\]
Adding this estimate to \eqref{eq:4.2.6} cancels the coupling and
yields \eqref{eq:4.2.4} after integration.
Gronwall's inequality and energy equivalence prove \eqref{eq:4.2.5}.
For two \(H^s\)-strong solutions with the same datum, compactness
of the time interval and strict stratification supply common
\(M,\kappa\). Applying \eqref{eq:4.2.5} with \(\sigma=s\) gives
\(\delta U=0\). The diagnostic formulas and the pressure
normalization then imply coincidence of the full solutions.
\end{proof}

The quantitative local existence result in
Proposition~\ref{prop:local-existence}, together with uniqueness,
yields the following result on maximal solutions and continuation.

\begin{proposition}\label{prop:maximal-solution}
Let \(s\ge4\) be an integer. Every \(U_0\in\mathcal X^s\) admits a
unique maximal \(H^s\)-strong solution \(U\) on
\([0,T_{\max}^{(s)}(U_0))\), where
\(T_{\max}^{(s)}(U_0)\in(0,\infty]\). Maximality is understood in the
strictly stratified class of Definition~\ref{def:strong-solution}.
If \(T_{\max}^{(s)}(U_0)<\infty\), then
\begin{equation}\label{eq:4.3.1}
 \limsup_{t\rightarrow T_{\max}^{(s)}(U_0)^{-}}
 \bigl(\|U(t)\|_{H^s\times H^s}+\|R(t)^{-1}\|_\infty\bigr)=\infty.
\end{equation}
For every integer \(\sigma\ge s\) and \(U_0\in\mathcal X^\sigma\),
the maximal solutions at the two regularity levels agree on their
common interval, and
\(T_{\max}^{(\sigma)}(U_0)\le T_{\max}^{(s)}(U_0)\).
The criterion \eqref{eq:4.3.1} also holds with \(s\) replaced by
\(\sigma\).
\end{proposition}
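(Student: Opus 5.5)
The plan is the standard maximal-continuation argument. It rests on the quantitative local existence result (Proposition~\ref{prop:local-existence}) and on uniqueness (Proposition~\ref{prop:lower-order-stability}).

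\emph{Existence of the maximal solution.} Given \(U_0\in\mathcal X^s\), let \(\mathcal T\) be the set of \(T>0\) for which an \(H^s\)-strong solution on \([0,T]\) with datum \(U_0\) exists. By Proposition~\ref{prop:local-existence}, \(\mathcal T\) is nonempty. Set \(T_{\max}^{(s)}(U_0):=\sup\mathcal T\). Uniqueness, applied on \([0,\min\{T,T'\}]\), shows that any two such solutions agree on their common interval. Gluing them therefore defines a single solution on \([0,T_{\max}^{(s)})\) whose restriction to each \([0,T]\), \(T<T_{\max}^{(s)}\), is \(H^s\)-strong.

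Two facts make concatenation legitimate at restart times. First, the restart datum \(U(t_0)\) lies in \(\mathcal X_{\kappa}^s\), since \(\bar v=\bar\theta=0\) and \(R\ge\kappa\) hold at each time. Second, if a solution on \([t_0,t_0+\tau]\) is glued to one on \([0,t_0]\), the regularity classes of Definition~\ref{def:strong-solution} persist across \(t_0\). For \(v\in L^2H^{s+1}\) and \(\partial_tv\in L^2H^{s-1}\) this holds because the pieces match continuously in \(H^s\). The temperature satisfies its equation on both sides, so \(\partial_t\theta\in C_tH^{s-1}\) follows from the equation and the continuity of \(v,w,\theta\). Uniqueness within the strictly stratified class then gives uniqueness of the maximal solution.

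\emph{Blow-up criterion.} Suppose \(T_{\max}^{(s)}<\infty\) but the limsup in \eqref{eq:4.3.1} is finite. Then there are \(M\ge1\) and \(\kappa>0\) with \(\|U(t)\|_{H^s\times H^s}\le M\) and \(R(t)\ge\kappa\) for all \(t<T_{\max}^{(s)}\); the lower bound follows because \(\|R^{-1}\|_\infty\) stays bounded and \(R\) is continuous and positive. Choose \(t_0=T_{\max}^{(s)}-\tau_*(M,\kappa)/2\). Proposition~\ref{prop:local-existence} supplies a solution from \(U(t_0)\) on \([t_0,t_0+\tau_*]\), with \(R\ge\kappa/4\), and gluing extends the solution beyond \(T_{\max}^{(s)}\). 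This is a contradiction.

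The key point is that \(\tau_*\) depends only on \(M\) and \(\kappa\), not on the particular datum. That is precisely why the dependence was tracked in Proposition~\ref{prop:local-existence}.

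\emph{Consistency across regularity levels.} Let \(\sigma\ge s\) and \(U_0\in\mathcal X^\sigma\subset\mathcal X^s\). The \(H^\sigma\)-maximal solution is, on each compact subinterval, an \(H^s\)-strong solution: the embeddings are continuous, and the same \(\kappa\) works. By uniqueness at level \(s\), it coincides with the \(H^s\)-maximal solution on the common interval, so \(T_{\max}^{(\sigma)}\le T_{\max}^{(s)}\).

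The criterion at level \(\sigma\) is the argument above run with \(s\) replaced by \(\sigma\); Proposition~\ref{prop:local-existence} holds for every integer \(\ge4\). I expect the main obstacle to be only the bookkeeping of the gluing step, namely verifying the time-regularity classes and the compatibility/stratification class at restart times. The analytic work is already contained in the preceding propositions.
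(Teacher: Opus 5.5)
Your proposal is correct and follows the paper's proof essentially step for step: define $T_{\max}^{(s)}(U_0)$ as the supremum of existence times, glue via uniqueness, restart with the uniform time $\tau_*(M,\kappa)$ to get the blow-up criterion, and compare levels using the inclusion of the $H^\sigma$-strong class in the $H^s$-strong class. One small fix: your choice $t_0=T_{\max}^{(s)}-\tau_*/2$ can be negative when $\tau_*>2T_{\max}^{(s)}$, so take any $t_0\in[0,T_{\max}^{(s)})$ with $T_{\max}^{(s)}-t_0<\tau_*$, as the paper does.
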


\begin{proof}
Define \(T_{\max}^{(s)}(U_0)\) as the supremum of all \(T>0\) for
which an \(H^s\)-strong solution with initial datum \(U_0\) exists on
\([0,T]\). This supremum is positive by
Proposition~\ref{prop:local-existence}. By
Proposition~\ref{prop:lower-order-stability}, any two such solutions
agree on their common interval. They therefore define a unique
solution on \([0,T_{\max}^{(s)}(U_0))\) whose restriction to every
compact subinterval is \(H^s\)-strong. An extension beyond
\(T_{\max}^{(s)}(U_0)\) would contradict the definition of the supremum.

Suppose that \(T_{\max}^{(s)}(U_0)<\infty\) and the limsup in
\eqref{eq:4.3.1} is finite. The resulting bounds near the terminal
time, together with the strong-solution bounds on compact subintervals,
imply that, for some \(M\ge1\) and \(\kappa>0\),
\[
 \sup_{0\le t<T_{\max}^{(s)}(U_0)}\|U(t)\|_{H^s\times H^s}\le M,
 \qquad
 \inf_{[0,T_{\max}^{(s)}(U_0))\times\Omega}R\ge\kappa.
\]
Choose \(t_0\in(0,T_{\max}^{(s)}(U_0))\) such that
\[
 T_{\max}^{(s)}(U_0)-t_0<\tau_*(M,\kappa).
\]
Proposition~\ref{prop:local-existence}, applied to \(U(t_0)\), yields
a restarted solution on \([t_0,t_0+\tau_*(M,\kappa)]\).
By Proposition~\ref{prop:lower-order-stability}, it agrees with \(U\)
on \([t_0,T_{\max}^{(s)}(U_0))\). Patching the solutions therefore
preserves the strong-solution regularity and strict stratification,
and yields a solution on \([0,t_0+\tau_*(M,\kappa)]\). Consequently,
\[
 T_{\max}^{(s)}(U_0)\ge t_0+\tau_*(M,\kappa)
                    >T_{\max}^{(s)}(U_0),
\]
a contradiction. This proves \eqref{eq:4.3.1}.

Finally, let \(\sigma\ge s\) be an integer and
\(U_0\in\mathcal X^\sigma\). Every \(H^\sigma\)-strong solution is
also \(H^s\)-strong on the same compact time interval. Taking the
suprema of the corresponding admissible existence times yields
\[
 T_{\max}^{(\sigma)}(U_0)\le T_{\max}^{(s)}(U_0).
\]
The two maximal solutions agree on every common compact interval by
Proposition~\ref{prop:lower-order-stability}. The continuation
criterion at level \(\sigma\) follows from the preceding argument
with \(s\) replaced by \(\sigma\).
\end{proof}

%=====================================================================
% End of Section 4
%=====================================================================

%=====================================================================
% Section 5. Bona--Smith continuity
%=====================================================================

\section{Bona--Smith continuity}
\label{sec:bona-smith}

Fix an integer \(s\ge4\). We prove continuous dependence in
\(H^s\times H^s\) on compact subintervals of the reference lifespan
by the Bona--Smith method \cite{BonaSmith1975}.
The argument uses a one-sided comparison in which higher-order norms
of the more regular solution multiply only lower-order norms of the
difference.

We write \(U(t;A)\) for the primary variables of the maximal solution
with initial datum \(A\), and \(T_{\max}(A)=T_{\max}^{(s)}(A)\).
In energy identities, we use the equivalent derivative norm
in \eqref{eq:2.13}.

\subsection{Smoothing and higher-order estimates}
\label{subsec:smoothing-lifespans}

For the initial data, set \(S_N=J_N\), where \(J_N\) is defined by
\eqref{eq:3.1.1}. The Fourier norm \eqref{eq:2.8} and the support
of the cutoff imply that, for \(r\in\mathbb R\), \(q\ge0\),
\(N\ge1\), and \(f\in H^r\),
\begin{equation}\label{eq:5.1.1}
 \|S_Nf\|_{H^{r+q}}\le C\langle N\rangle^q\|f\|_{H^r},\qquad
 \langle N\rangle^q\|(I-S_N)f\|_{H^{r-q}}\le C\|f\|_{H^r},
\end{equation}
where \(\langle N\rangle=(1+N^2)^{\frac{1}{2}}\) and \(C\) is
independent of \(N\). Since \(1-\chi(\frac{\cdot}{N})\) vanishes
on frequencies of length at most \(N\),
\[
 \langle N\rangle^{2q}\|(I-S_N)f\|_{H^{r-q}}^2
 \le CL_xL_z\sum_{\substack{(k,m)\in\mathfrak K\\ |(k,m)|>N}}
       (1+k^2+m^2)^r|\widehat f(k,m)|^2.
\]
Hence, for every \(f\in H^r\),
\begin{equation}\label{eq:5.1.2}
 \langle N\rangle^q\|(I-S_N)f\|_{H^{r-q}}\longrightarrow0
 \qquad\text{as }N\to\infty.
\end{equation}
If \(f_n\to f\) in \(H^r\), applying \eqref{eq:5.1.1} to
\(f_n-f\) and using \eqref{eq:5.1.2} yields
\begin{equation}\label{eq:5.1.3}
 \lim_{N\to\infty}\limsup_{n\to\infty}
 \langle N\rangle^q\|(I-S_N)f_n\|_{H^{r-q}}=0.
\end{equation}
For fixed \(N\), the first estimate in \eqref{eq:5.1.1} also
implies \(S_Nf_n\to S_Nf\) in \(H^\sigma\) for every \(\sigma\ge r\).

We next obtain a common existence interval for the original and
smoothed data. For \(U=(v,\theta)\) and
\(\widetilde U=(\widetilde v,\widetilde\theta)\) in \(H^s\times H^s\),
the embedding \eqref{eq:2.18} yields
\begin{equation}\label{eq:5.1.4}
 \inf_\Omega(\gamma+\partial_z\widetilde\theta)
 \ge\inf_\Omega(\gamma+\partial_z\theta)
       -C_s\|\widetilde U-U\|_{H^s\times H^s}.
\end{equation}
Let \(A_n\in\mathcal Z^s\) satisfy
\(A_n\to A\in\mathcal X_\kappa^s\) in \(H^s\times H^s\).
Since \(S_N\) preserves the mean constraints,
\(A,A_n,S_NA,S_NA_n\in\mathcal Z^s\). Moreover,
\[
 \|S_NA_n-A\|_{H^s\times H^s}
 \le C\|A_n-A\|_{H^s\times H^s}
       +\|(S_N-I)A\|_{H^s\times H^s}.
\]
Together with \eqref{eq:5.1.2} and \eqref{eq:5.1.4}, this implies
that the four initial data belong to
\(\mathcal X_{\frac{\kappa}{2}}^s\) and have a common
\(H^s\times H^s\)-bound for all sufficiently large \(n,N\).
The thresholds for \(n\) and \(N\) can be chosen independently.
Proposition~\ref{prop:local-existence} therefore provides a common
interval \([0,\tau_0]\) on which the corresponding solutions satisfy
\eqref{eq:4.1.5}, with uniform norm and dissipation bounds and a
uniform positive stratification lower bound.
The interval and these bounds depend only on the common initial
norm bound and \(\kappa\), with the remaining parameters fixed,
and are independent of \(n,N\).

For an integer \(\sigma\ge s\), define \(E_\sigma,D_\sigma\) by
\eqref{eq:3.2.1}, with \(s\) replaced by \(\sigma\) and the cutoff
subscript omitted. The following proposition establishes persistence
of higher regularity through an estimate whose constant is independent
of the higher-order size of the solution.

\begin{proposition}\label{prop:tame-persistence}
Let \(\sigma\ge s\ge4\) be integers and \(T>0\). Suppose that an
\(H^\sigma\)-strong solution \(U\) on \([0,T]\) satisfies
\begin{equation}\label{eq:5.2.1}
 \sup_{0\le t\le T}\|U(t)\|_{H^s\times H^s}\le M,
 \qquad \inf_{[0,T]\times\Omega}R\ge\kappa,
\end{equation}
for some \(M\ge1\) and \(\kappa>0\). Then
\begin{equation}\label{eq:5.2.2}
 \sup_{0\le t\le T}\|U(t)\|_{H^\sigma\times H^\sigma}^2
       +\int_0^T\|v\|_{H^{\sigma+1}}^2\,\mathrm dt
 \le C\|U(0)\|_{H^\sigma\times H^\sigma}^2.
\end{equation}
The constant \(C\) is independent of the higher-order norms of \(U\).
If an \(H^s\)-strong solution satisfies \eqref{eq:5.2.1} and
\(U(0)\in\mathcal X^\sigma\), then it is \(H^\sigma\)-strong on
\([0,T]\) and satisfies \eqref{eq:5.2.2}.
\end{proposition}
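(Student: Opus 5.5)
The plan is to repeat the weighted energy argument of Lemma~\ref{lem:weighted-energy} at level \(\sigma\), with a \emph{tame} bookkeeping of every nonlinear term. The goal is an estimate that is linear in the top-order energy, with coefficients controlled only by \(M,\kappa\) and the \(H^{s+1}\)-dissipation. Concretely, I aim for
\[
 \frac{\mathrm d}{\mathrm dt}E_\sigma+cD_\sigma\le C\bigl(1+\|v\|_{H^{s+1}}^2\bigr)E_\sigma .
\]
By Gronwall, and because \(\int_0^T\|v\|_{H^{s+1}}^2\le C(M,\kappa,T)\) (from \eqref{eq:3.2.10} at level \(s\), or from \eqref{eq:4.1.5} on restarted intervals), this gives \eqref{eq:5.2.2}. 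Energy equivalence \(E_\sigma\simeq\|U\|_{H^\sigma\times H^\sigma}^2\) holds with constants depending only on \(\kappa\) and \(\|R\|_\infty\le C(M)\).

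For an \(H^\sigma\)-strong solution, the identities are not directly justified at top order. I therefore work with \(E_L[q,\cdot]\) from Lemma~\ref{lem:regularized-identities}, i.e. with \(J_L\partial^\alpha\) applied to the equations. I will use \eqref{eq:4.1.2}, estimate uniformly in \(L\) using \eqref{eq:3.2.4}, and let \(L\to\infty\) using \eqref{eq:4.1.1}, exactly as in Proposition~\ref{prop:temperature-continuity}. The structural steps are unchanged. The pressure coupling \eqref{eq:3.2.11} cancels against the weighted principal term \(q R\partial^\alpha w\). The transport terms are handled through \eqref{eq:3.2.5}. The weight derivative \(\partial_t q\) is bounded in \(L^\infty\) by \(C(M,\kappa)\), using only four derivatives of \(\theta\). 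The horizontal endpoint of \(\bar w\) is treated via \(\mathcal B_\sigma(\overline{v\theta};v,\cdot)\) and \eqref{eq:3.2.9}.

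The key change is in how the commutators and remainders are bounded. I use the tame forms \eqref{eq:2.14}, \eqref{eq:2.15}, \eqref{eq:2.5}, \eqref{eq:3.2.6}, in which each term is (low norm)\(\times\)(high norm), with low norms in \(W^{1,\infty}\) or \(W^{2,\infty}\) controlled by \(M\). For example, \(\|\mathcal R_\alpha\|_2\le C(M)(\|U\|_{H^\sigma}+\|w\|_{H^{\sigma-1}})\). Here \(\|w\|_{H^{\sigma-1}}\le C(M)\|U\|_{H^\sigma}\) by tame product estimates in \eqref{eq:3.2.3}. Likewise \(|\mathcal B_\sigma|\le C\|\overline{v\theta}\|_{H^\sigma_x}(\|v\|_{W^{2,\infty}}\|v\|_{H^\sigma}+\|v\|_{W^{1,\infty}}\|v\|_{H^{\sigma+1}})\) with \(\|\overline{v\theta}\|_{H^\sigma_x}\le C(M)\|U\|_{H^\sigma}\). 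Young's inequality then gives \(\eta D_\sigma+C E_\sigma\).

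The main obstacle is the momentum commutators \([\partial^\alpha,w^0]\partial_z v\) and the \(w^0\)-contribution. Here \(\|w^0\|_{H^\sigma}\) costs \(\|v\|_{H^{\sigma+1}}\), and it multiplies \(\|\partial_z v\|_\infty\le M\), so it is absorbed by dissipation. The temperature factor \(\|\nabla R\|_\infty\|w\|_{H^{\sigma-1}}\) is fine. The weighted cutoff commutator \(q[J_L,R]\partial^\alpha w\) is bounded by \(\|\nabla R\|_\infty\|w\|_{H^{\sigma-1}}\|\theta\|_{H^\sigma}\). I expect the only genuinely delicate place is ensuring no product of two high norms appears, which the endpoint-free estimate \eqref{eq:2.5} handles. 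Where \(\|\nabla w\|_\infty\) requires \(\|v\|_{H^{s+1}}\)-type control, this is what produces the \(\|v\|_{H^{s+1}}^2\) in the Gronwall coefficient.

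For the last assertion, let \(U\) be \(H^s\)-strong with \(U(0)\in\mathcal X^\sigma\). Proposition~\ref{prop:maximal-solution} gives an \(H^\sigma\)-maximal solution that agrees with \(U\) on the common interval. On any \([0,T']\subset[0,\min(T,T_{\max}^{(\sigma)}))\), the estimate just proved bounds the \(H^\sigma\)-norm uniformly, and \(R\ge\kappa\) holds there. So if \(T_{\max}^{(\sigma)}\le T\), the blow-up criterion \eqref{eq:4.3.1} at level \(\sigma\) is contradicted. Hence the solution is \(H^\sigma\)-strong on \([0,T]\) and \eqref{eq:5.2.2} holds.
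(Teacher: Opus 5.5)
Your proposal is correct and follows the paper's proof. The common steps are the weighted $H^\sigma$ energy with tame commutator bounds, the cancellation of the pressure against the coupling $qR\partial^\alpha w$, the compensated horizontal endpoint $\mathcal B_\sigma(\overline{v\theta};v,\cdot)$, justification through $J_L$-regularization with \eqref{eq:4.1.1}--\eqref{eq:4.1.2}, Gronwall, and the blow-up criterion \eqref{eq:4.3.1} at level $\sigma$.

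The only deviation is that the factor $\|v\|_{H^{s+1}}^2$ in your Gronwall coefficient is unnecessary. Indeed, $\|\nabla w\|_\infty\le C(M)$ already follows from \eqref{eq:2.11} and \eqref{eq:3.2.3} using only the $W^{2,\infty}$ bounds, so the paper closes with $\frac{\mathrm d}{\mathrm dt}E_\sigma+cD_\sigma\le CE_\sigma$. Your weaker version is still valid. The bound $\int_0^T\|v\|_{H^{s+1}}^2\le C(M,\kappa,T)$ it needs comes from restarting with \eqref{eq:4.1.5} and uniqueness. It does not come from \eqref{eq:3.2.10}, which concerns only the Friedrichs approximations.
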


\begin{proof}
Write \(q=R^{-1}\). By \eqref{eq:2.18}, \eqref{eq:2.20}, and
\eqref{eq:3.2.3}, the bounds \eqref{eq:5.2.1} control
\(v,\theta\) in \(W^{2,\infty}\), \(w\) in \(W^{1,\infty}\),
and \(R,q\) in \(W^{1,\infty}\), with \(0<c\le q\le\kappa^{-1}\).
Since \(\bar v=0\), \eqref{eq:2.12} and \eqref{eq:2.13} yield
\[
 C^{-1}\|U\|_{H^\sigma\times H^\sigma}^2\le E_\sigma
 \le C\|U\|_{H^\sigma\times H^\sigma}^2,
 \qquad D_\sigma\ge c\|v\|_{H^{\sigma+1}}^2.
\]
The constants in the differential estimates below are independent of
\(T\) and the higher-order norms of \(U\).
Equations \eqref{eq:2.10}, \eqref{eq:2.14}, and \eqref{eq:2.21}
imply
\begin{equation}\label{eq:5.2.3}
 \|w\|_{H^{\sigma-1}}+\|R-\gamma\|_{H^{\sigma-1}}
 \le CE_\sigma^{\frac{1}{2}},\qquad
 \|w^0\|_{H^\sigma}\le C\|v\|_{H^{\sigma+1}}.
\end{equation}

For smooth solutions, \eqref{eq:3.2.7}, with \(m=\sigma\),
separates the horizontal endpoint from the momentum commutators.
By \eqref{eq:2.15}, \eqref{eq:3.2.6}, and \eqref{eq:5.2.3},
\[
 \sum_{|\alpha|\le\sigma}\bigl(
 \|[\partial^\alpha,v]\partial_xv\|_2+
 \|[\partial^\alpha,w^0]\partial_zv\|_2+
 \|\mathcal H_\alpha[\bar w,v]\|_2\bigr)
 \le C\bigl(E_\sigma^{\frac{1}{2}}+D_\sigma^{\frac{1}{2}}\bigr).
\]
Their energy pairings are bounded by
\(\eta D_\sigma+C_\eta E_\sigma\) for every \(\eta>0\), and the
principal transport vanishes by incompressibility.
Since \eqref{eq:2.14} and \eqref{eq:2.21} imply
\(\|\overline{v\theta}\|_{H_x^\sigma}\le CE_\sigma^{\frac{1}{2}}\),
the horizontal endpoint satisfies
\begin{equation}\label{eq:5.2.4}
 |\mathcal B_\sigma(\overline{v\theta};v,v)|
 \le CE_\sigma^{\frac{1}{2}}
       \bigl(E_\sigma^{\frac{1}{2}}+D_\sigma^{\frac{1}{2}}\bigr)
 \le\eta D_\sigma+C_\eta E_\sigma
\end{equation}
by \eqref{eq:3.2.8} and \eqref{eq:3.2.9}.

The temperature equation and incompressibility imply
\begin{equation}\label{eq:5.2.5}
 (\partial_t+v\partial_x+w\partial_z)q
 =-q\partial_xv+q^2(\partial_zv)\partial_x\theta.
\end{equation}
Its right-hand side is bounded in \(L^\infty\) by
\eqref{eq:5.2.1}.
The remainder in \eqref{eq:3.2.13}--\eqref{eq:3.2.14} satisfies
\[
 \sum_{|\alpha|\le\sigma}\|\mathcal R_\alpha\|_2
 \le C\bigl(\|v\|_{H^\sigma}+\|\theta\|_{H^\sigma}
                  +\|w\|_{H^{\sigma-1}}\bigr)
 \le CE_\sigma^{\frac{1}{2}}
\]
by \eqref{eq:2.5}, \eqref{eq:2.15}, and \eqref{eq:5.2.3},
including the zero-order term \(\mathcal R_0=-w\partial_z\theta\).
Testing the differentiated temperature equation against
\(g\beta q\partial^\alpha\theta\), summing over
\(|\alpha|\le\sigma\), and using \eqref{eq:5.2.5}, we obtain
\[
 \frac{g\beta}{2}\frac{\mathrm d}{\mathrm dt}
 \sum_{|\alpha|\le\sigma}\int_\Omega q|\partial^\alpha\theta|^2
 +g\beta\sum_{|\alpha|\le\sigma}\int_\Omega
       \partial^\alpha w\,\partial^\alpha\theta
 \le CE_\sigma.
\]
The coupling cancels the pressure term in the momentum estimate by
\eqref{eq:3.2.11} without cutoff. Adding the two estimates and absorbing
\eqref{eq:5.2.4} and the non-endpoint pairings, with \(\eta\)
sufficiently small, yields
\begin{equation}\label{eq:5.2.6}
 \frac{\mathrm d}{\mathrm dt}E_\sigma+cD_\sigma\le CE_\sigma.
\end{equation}

To justify the calculation for an \(H^\sigma\)-strong solution,
apply \(J_L\partial^\alpha\) to the original equations.
The weighted product rule \eqref{eq:4.1.2} applies with weights
\(1,q\), since \(q_t=-q^2\partial_z\theta_t\in C_tL^\infty\)
by \(\theta_t\in C_tH^{\sigma-1}\) and \eqref{eq:2.18}.
At each finite cutoff,
\begin{equation}\label{eq:5.2.7}
\begin{aligned}
 &\int_\Omega J_L\partial^\alpha\partial_xp\,J_L\partial^\alpha v
   +g\beta\int_\Omega qJ_L(R\partial^\alpha w)
                                      J_L\partial^\alpha\theta\\
 ={}&g\beta\int_\Omega q[J_L,R]\partial^\alpha w\,
                                      J_L\partial^\alpha\theta.
\end{aligned}
\end{equation}
The regularized homogeneous pressure and velocity terms have zero
pairing, since
\(\overline{J_L\partial^\alpha v}
=\overline{J_L\partial^\alpha\theta}=0\).
Since \(w\in C_tH^{\sigma-1}\), \eqref{eq:4.1.1} implies
\[
 [J_L,R]\partial^\alpha w\longrightarrow0
 \qquad\text{in }L_t^1L^2.
\]
Thus the right-hand side of \eqref{eq:5.2.7} vanishes in
\(L_t^1\). The same result, applied componentwise, gives
\[
 [J_L,v]\partial_x\partial^\alpha U+
 [J_L,w]\partial_z\partial^\alpha U\longrightarrow0
 \qquad\text{in }L_t^1(L^2\times L^2).
\]
These commutators vanish in the integrated energy pairings. The
principal transport terms are handled by incompressibility and
\eqref{eq:5.2.5}.

The regularized horizontal endpoint is
\(\mathcal B_\sigma(\overline{v\theta};v,J_L^2v)\).
By linearity in the last argument and \eqref{eq:3.2.9},
\[
 \|\mathcal B_\sigma(\overline{v\theta};v,(J_L^2-I)v)\|_{L_t^1}
 \le CT^{\frac{1}{2}}\|\overline{v\theta}\|_{C_tH_x^\sigma}
             \|(J_L^2-I)v\|_{L_t^2H^{\sigma+1}}\longrightarrow0,
\]
where convergence follows from Lemma~\ref{lem:regularized-identities}.
All non-endpoint remainders belong to \(L_t^2L^2\), so their
regularizations converge in that space. The same lemma gives
\(J_Lv\to v\) in \(L_t^2H^{\sigma+1}\) and uniform convergence
of the weighted energies. Passing to the limit in the integrated
identities therefore proves \eqref{eq:5.2.6} in distributions.
Gronwall's inequality yields
\[
 E_\sigma(t)+c\int_0^t D_\sigma(\rho)\,\mathrm d\rho
 \le e^{Ct}E_\sigma(0),\qquad 0\le t\le T.
\]
Energy equivalence and the lower bound for \(D_\sigma\) prove
\eqref{eq:5.2.2}.

Finally, let \(U\) be an \(H^s\)-strong solution on \([0,T]\)
satisfying \eqref{eq:5.2.1}, with \(U(0)\in\mathcal X^\sigma\).
By Proposition~\ref{prop:maximal-solution}, the maximal
\(H^\sigma\)-strong solution with the same datum agrees with \(U\)
on every common compact interval.
If \(T_{\max}^{(\sigma)}(U(0))\le T\), the preceding estimate on
these intervals implies
\[
 \sup_{0\le t<T_{\max}^{(\sigma)}(U(0))}
 \bigl(\|U(t)\|_{H^\sigma\times H^\sigma}+\|q(t)\|_\infty\bigr)
 \le Ce^{CT}\|U(0)\|_{H^\sigma\times H^\sigma}
                 +\kappa^{-1}<\infty.
\]
The constants are uniform in the terminal time of the compact interval.
This contradicts \eqref{eq:4.3.1} at level \(\sigma\). Hence
\(T<T_{\max}^{(\sigma)}(U(0))\), which proves persistence on
\([0,T]\) and \eqref{eq:5.2.2}.
\end{proof}

Let \(\sigma\ge s\) be an integer, and let \(A\) range over a
bounded subset of \(H^s\times H^s\), with
\(S_NA\in\mathcal X^\sigma\).
Suppose that the corresponding \(H^s\)-strong solutions exist on
a common interval \([0,T]\), with uniform \(H^s\)-bounds and a
uniform positive stratification lower bound.
Proposition~\ref{prop:tame-persistence} ensures that these solutions
are \(H^\sigma\)-strong throughout \([0,T]\).
Combining \eqref{eq:5.1.1} with \eqref{eq:5.2.2}, we obtain
\begin{equation}\label{eq:5.2.8}
 \|U(\cdot;S_NA)\|_{C_t(H^\sigma\times H^\sigma)}
       +\|v(\cdot;S_NA)\|_{L_t^2H^{\sigma+1}}
 \le C_\sigma\langle N\rangle^{\sigma-s},
\end{equation}
where \(C_\sigma\) is independent of \(A\) and \(N\).

\subsection{One-sided comparison estimates}

We first treat the vertical endpoint arising in the top-order estimates for the projected temperature difference equation.
The following lemma extends the corresponding weighted pairing to a continuous quadratic form on \(L^2\) and establishes convergence of the regularized pairings.

\begin{lemma}\label{lem:vertical-endpoint}
Let \(q\in W^{1,\infty}(\Omega)\),
\(b\in W^{1,\infty}(\mathbb T_x)\), and \(h\in L^2(\Omega)\)
be real-valued, with \(b\) identified with its \(z\)-independent
extension to \(\Omega\). Define
\(\mathcal Q_{q,b}(h):=\frac{1}{2}\int_\Omega(\partial_zq)b|h|^2\).
Then
\[
 |\mathcal Q_{q,b}(h)|\le\frac{1}{2}
 \|\partial_zq\|_\infty\|b\|_\infty\|h\|_2^2,
\]
and \(\mathcal Q_{q,b}(h)\) is jointly continuous in \((q,b,h)\)
for convergence in \(W^{1,\infty}\times L^\infty\times L^2\).
For \(h\in H^1(\Omega)\),
\(\mathcal Q_{q,b}(h)=-\int_\Omega qb(\partial_zh)h\).
With \(J_L\) defined by \eqref{eq:3.1.1}, one has
\begin{equation}\label{eq:5.3.1}
 -\int_\Omega qJ_L(b\partial_zh)J_Lh
 \longrightarrow\mathcal Q_{q,b}(h)\qquad\text{as }L\to\infty.
\end{equation}
If \(q\in L_t^\infty W^{1,\infty}(\Omega)\),
\(b\in L_t^\infty W^{1,\infty}(\mathbb T_x)\), and
\(h\in C_tL^2(\Omega)\), then the convergence in
\eqref{eq:5.3.1} holds in \(L^1(0,T)\).
\end{lemma}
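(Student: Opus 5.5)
The bound and the joint continuity are direct. Since \(b\) is bounded and \(\partial_zq\in L^\infty\), Hölder's inequality gives \(|\mathcal Q_{q,b}(h)|\le\frac12\|\partial_zq\|_\infty\|b\|_\infty\|h\|_2^2\). Joint continuity then follows by writing the difference \(\mathcal Q_{q_1,b_1}(h_1)-\mathcal Q_{q_2,b_2}(h_2)\) as a telescoping sum of three terms. The first contains \(\partial_z(q_1-q_2)\), the second \(b_1-b_2\), and the third \(|h_1|^2-|h_2|^2=(h_1-h_2)(h_1+h_2)\); each is estimated by the same Hölder bound. For \(h\in H^1\), the identity comes from one vertical integration by parts. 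Since \(b\) is independent of \(z\), we have \(-\int_\Omega qb(\partial_zh)h=-\frac12\int_\Omega qb\,\partial_z|h|^2=\frac12\int_\Omega(\partial_zq)b|h|^2\), with no boundary terms by periodicity.

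For \eqref{eq:5.3.1}, I would write
\[
 qJ_L(b\partial_zh)=J_L(qb\,\partial_zh)-[J_L,q](b\partial_zh),
\]
and also commute \(b\) through \(J_L\) acting on \(\partial_zh\), so that
\[
 -\int_\Omega qJ_L(b\partial_zh)J_Lh
 =-\int_\Omega qb\,\partial_zJ_Lh\,J_Lh-\int_\Omega q[J_L,b]\partial_zh\,J_Lh.
\]
The first term equals \(\mathcal Q_{q,b}(J_Lh)\) by the \(H^1\) identity, since \(J_Lh\) is smooth. It converges to \(\mathcal Q_{q,b}(h)\) by joint continuity, because \(J_Lh\to h\) in \(L^2\).

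For the commutator term, \(\partial_zh\in H^{-1}\) with \(\|\partial_zh\|_{H^{-1}}\le\|h\|_2\), and \(b\in W^{1,\infty}(\Omega)\) after extension. Lemma~\ref{lem:friedrichs-commutator} therefore bounds \(\|[J_L,b]\partial_zh\|_2\le C\|\nabla b\|_\infty\|h\|_2\) uniformly in \(L\). The time-independent version of \eqref{eq:4.1.1} (Lemma~\ref{lem:regularized-identities}) gives \([J_L,b]\partial_zh\to0\) in \(L^2\). Pairing with \(qJ_Lh\), which is bounded in \(L^2\), then shows that this term vanishes in the limit.

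For the time-dependent statement, the same decomposition is applied pointwise in \(t\). The term \(\mathcal Q_{q,b}(J_Lh)-\mathcal Q_{q,b}(h)\) is bounded by \(C\|q\|_{L^\infty_tW^{1,\infty}}\|b\|_{L^\infty_tL^\infty}\|(J_L-I)h\|_{C_tL^2}\|h\|_{C_tL^2}\). This tends to zero uniformly in \(t\), because \(J_Lh\to h\) in \(C_tL^2\) by Lemma~\ref{lem:regularized-identities}. For the commutator, \(\partial_zh\in C_tH^{-1}\subset L^1_tH^{-1}\), and \eqref{eq:4.1.1} with \(p=1\) gives \([J_L,b]\partial_zh\to0\) in \(L^1_tL^2\). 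Multiplying by \(\|q\|_{L^\infty_tL^\infty}\sup_t\|J_Lh\|_2\) yields convergence in \(L^1(0,T)\).

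The main point requiring care is that \(h\) is only \(L^2\). Both the uniform control and the convergence therefore have to pass through the \(H^{-1}\) commutator estimate of Lemma~\ref{lem:friedrichs-commutator}, and not through any \(L^2\) bound on \(\partial_zh\). One also has to check that the \(z\)-independent extension of \(b\) lies in \(W^{1,\infty}(\Omega)\), which is immediate.
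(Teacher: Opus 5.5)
Your proposal is correct and follows the paper's proof essentially step for step. You use the same decomposition \(-\int_\Omega qJ_L(b\partial_zh)J_Lh=\mathcal Q_{q,b}(J_Lh)-\int_\Omega q[J_L,b]\partial_zh\,J_Lh\), treat the first term by strong \(L^2\)-convergence of \(J_Lh\), and handle the commutator by \eqref{eq:4.1.1} applied to \(\partial_zh\in H^{-1}\), with \(p=1\) in the time-dependent case; the preliminary splitting with \([J_L,q]\) is never used and can be dropped.
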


\begin{proof}
H\"older's inequality and continuity of \(h\mapsto |h|^2\) from
\(L^2\) to \(L^1\) prove the bound and joint continuity.
Periodic integration by parts, using \(\partial_zb=0\), gives the
identity for \(h\in H^1\). By density, \(\mathcal Q_{q,b}\) is the
unique continuous extension of this quadratic pairing to \(L^2\).

For \(h\in L^2\), the identity
\(b\partial_zh=\partial_z(bh)\in H^{-1}\) makes the regularized
pairing well defined. Commutation of \(J_L\) with \(\partial_z\)
and the preceding integration-by-parts identity yield
\[
 -\int_\Omega qJ_L(b\partial_zh)J_Lh
 =\mathcal Q_{q,b}(J_Lh)
       -\int_\Omega q[J_L,b]\partial_zh\,J_Lh.
\]
The first term tends to \(\mathcal Q_{q,b}(h)\) by strong
\(L^2\)-convergence. For the second, the time-independent version
of \eqref{eq:4.1.1}, applied to \(\partial_zh\in H^{-1}\), gives
\([J_L,b]\partial_zh\to0\) in \(L^2\). This proves
\eqref{eq:5.3.1}.

In the time-dependent case, the same decomposition and H\"older's
inequality yield
\[
 \left\|-\int_\Omega qJ_L(b\partial_zh)J_Lh
                 -\mathcal Q_{q,b}(h)\right\|_{L_t^1}
 \le C\|h\|_{C_tL^2}\bigl(T\|(J_L-I)h\|_{C_tL^2}+\|[J_L,b]\partial_zh\|_{L_t^1L^2}\bigr),
\]
where \(C\) is independent of \(L\).
Lemma~\ref{lem:regularized-identities} gives
\((J_L-I)h\to0\) in \(C_tL^2\), and \eqref{eq:4.1.1} with
\(p=1\) gives \([J_L,b]\partial_zh\to0\) in \(L_t^1L^2\).
The right-hand side therefore tends to zero.
\end{proof}

Let \(U=(v,\theta)\) and \(U^N=(v^N,\theta^N)\) be two
\(H^s\)-strong solutions on \([0,\tau]\) satisfying
\begin{equation}\label{eq:5.3.2}
 \sup_{0\le t\le\tau}
 \bigl(\|U(t)\|_{H^s\times H^s}
       +\|U^N(t)\|_{H^s\times H^s}\bigr)\le M,
 \qquad R,R^N\ge\kappa>0,
\end{equation}
where the stratification bounds hold on \([0,\tau]\times\Omega\).
No relation between the initial data is assumed.
Write \(\delta U^N=U^N-U\),
\(\Delta R=R^N-R=\partial_z\delta\theta^N\), and
\(\delta w^0=w^{N,0}-w^0=-\partial_z^{-1}\partial_x\delta v^N\).
Set
\[
 F=v\partial_x\theta+w^0R,\qquad
 F^N=v^N\partial_x\theta^N+w^{N,0}R^N,\qquad
 \Delta F=F^N-F.
\]
We use \(q=R^{-1}\) and
\(h_\alpha=\partial^\alpha\delta\theta^N\) for \(|\alpha|\le s\),
and define
\[
 X_j^2=\|\delta v^N\|_{H^j}^2+\|\delta\theta^N\|_{H^j}^2,
 \qquad j\in\{s-1,s\}.
\]
The higher-order norms of \(U^N\) enter through
\[
 M_N(t)=1+\|U^N(t)\|_{H^{s+1}\times H^{s+1}}
                 +\|v^N(t)\|_{H^{s+2}}.
\]
The following estimate holds pointwise at times for which
\(M_N(t)<\infty\). Its constant is independent of \(N\) and the
higher-order norms of \(U^N\).

Using the quadratic form in Lemma~\ref{lem:vertical-endpoint},
define
\(\mathcal C_0^N:=\frac{1}{\gamma}
\mathcal Q_{q,\overline{F^N}}(h_0)\)
and, for \(1\le|\alpha|\le s\), set
\begin{equation}\label{eq:5.3.3}
\begin{aligned}
 \mathcal C_\alpha^N:={}&-\frac{1}{\gamma}
 \sum_{\substack{0<\nu\le\alpha\\\nu\ne\alpha}}\binom\alpha\nu
 \int_\Omega q(\partial^\nu R)
                  \overline{\partial^{\alpha-\nu}\Delta F}\,h_\alpha\\
 &-\frac{1}{\gamma}
 \sum_{\substack{\nu\le\alpha\\\nu\ne\alpha}}\binom\alpha\nu
 \int_\Omega q(\partial^\nu\Delta R)
                  \overline{\partial^{\alpha-\nu}F^N}\,h_\alpha\\
 &-\frac{1}{\gamma}\int_\Omega q(\partial^\alpha R^N)
                  \overline{\Delta F}\,h_\alpha
       +\frac{1}{\gamma}\mathcal Q_{q,\overline F}(h_\alpha).
\end{aligned}
\end{equation}
The quadratic-form terms give the continuous extensions of the
vertical endpoint pairings to \(L^2\).
The following lemma estimates the projection commutator and the
projection difference jointly.

\begin{lemma}\label{lem:projected-commutator}
Under \eqref{eq:5.3.2}, the quantities \(\mathcal C_\alpha^N\),
\(|\alpha|\le s\), are well defined whenever \(M_N(t)<\infty\), and
\begin{equation}\label{eq:5.3.4}
 \left|\sum_{|\alpha|\le s}\mathcal C_\alpha^N\right|
 \le CX_s^2+CM_N^2X_{s-1}^2.
\end{equation}
The constant \(C\) is independent of \(N\) and \(M_N\).
For smooth functions, \(\mathcal C_\alpha^N\) is the pairing of
\([\partial^\alpha,\mathcal P_R]\Delta F+
\partial^\alpha((\mathcal P_{R^N}-\mathcal P_R)F^N)\)
with \(qh_\alpha\).
\end{lemma}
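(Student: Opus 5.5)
We first verify the algebraic identity for smooth functions and then bound each of the four groups in \eqref{eq:5.3.3} separately, summing over \(|\alpha|\le s\).

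\textbf{The smooth identity.} Since \(\mathcal P_Rf=f-\gamma^{-1}R\bar f\) and \(\partial^\alpha\) commutes with averaging, we have
\[
 [\partial^\alpha,\mathcal P_R]\Delta F
 =-\frac{1}{\gamma}\sum_{0<\nu\le\alpha}\binom\alpha\nu(\partial^\nu R)\overline{\partial^{\alpha-\nu}\Delta F}.
\]
We also have
\[
 \partial^\alpha\bigl((\mathcal P_{R^N}-\mathcal P_R)F^N\bigr)
 =-\frac{1}{\gamma}\sum_{\nu\le\alpha}\binom\alpha\nu(\partial^\nu\Delta R)\overline{\partial^{\alpha-\nu}F^N}.
\]
The two endpoint terms \(\nu=\alpha\) are combined by the identity from the introduction:
\[
 (\partial^\alpha R)\overline{\Delta F}+(\partial^\alpha\Delta R)\overline{F^N}
 =(\partial^\alpha R^N)\overline{\Delta F}+(\partial^\alpha\Delta R)\overline F.
\]
The last term equals \(\partial_z h_\alpha\,\overline F\), because \(\partial^\alpha\Delta R=\partial_z\partial^\alpha\delta\theta^N\). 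Its pairing with \(qh_\alpha\), multiplied by \(-\gamma^{-1}\), is \(\gamma^{-1}\mathcal Q_{q,\overline F}(h_\alpha)\) by Lemma~\ref{lem:vertical-endpoint}. For \(\alpha=0\), only \(-\gamma^{-1}(\Delta R)\overline{F^N}\) occurs, and it is \(\gamma^{-1}\mathcal Q_{q,\overline{F^N}}(h_0)\) in the same way. Well-definedness follows from the estimates below, since every pairing is an \(L^2\) pairing or a quadratic form with \(\overline F,\overline{F^N}\in W^{1,\infty}_x\).

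\textbf{Estimates.} We use \(\|q\|_{W^{1,\infty}}\le C\), \(\|R,R^N\|_{W^{1,\infty}}\le C\), and the diagnostic bounds \eqref{eq:3.2.3}. The following bounds hold.

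\(\bullet\) For \(\overline F\): \(\|\overline F\|_{W^{1,\infty}_x}\le C\), since \(\overline F=\partial_x\overline{v\theta}\) and \(\|v\theta\|_{W^{2,\infty}}\le C\). Lemma~\ref{lem:vertical-endpoint} then bounds the quadratic-form terms by \(CX_s^2\).

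\(\bullet\) For \(\Delta F\): \(\overline{\Delta F}=\partial_x\overline{v^N\theta^N-v\theta}\). This gives \(\|\overline{\Delta F}\|_{H^{s-1}_x}\le CX_s\) and \(\|\overline{\Delta F}\|_{L^\infty}\le CX_{s-1}\).

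\(\bullet\) For \(F^N\): \(\|\overline{F^N}\|_{H^{s}_x}\le CM_N\), and \(\overline{F^N}\) is bounded in \(W^{1,\infty}\).

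\textbf{The sums.} In the first sum, \(\partial^\nu R\) carries between \(1\) and \(s-1\) derivatives of \(R\), and \(\overline{\partial^{\alpha-\nu}\Delta F}\) carries at most \(s-1\) horizontal derivatives. Since \(\overline{\Delta F}\) depends only on \(x\), the balanced estimate \eqref{eq:2.16} (or \eqref{eq:2.5}) with \(\|R-\gamma\|_{H^{s-1}}\le C\) and \(\|\overline{\Delta F}\|_{H^{s-1}_x}\le CX_s\) bounds it by \(CX_s\|h_\alpha\|_2\le CX_s^2\). No extra coefficient derivative appears here.

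The endpoint term \(-\gamma^{-1}\int q(\partial^\alpha R^N)\overline{\Delta F}h_\alpha\) is bounded by \(\|R^N\|_{H^{s}}\|\overline{\Delta F}\|_\infty\|h_\alpha\|_2\le CM_NX_{s-1}X_s\). This is exactly where the higher norm of \(U^N\) meets only the lower-order difference, and Young's inequality converts it into \(CX_s^2+CM_N^2X_{s-1}^2\).

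In the second sum, with \(\nu\ne\alpha\), \(\partial^\nu\Delta R\) involves at most \(s\) derivatives of \(\delta\theta^N\). We split by size. When \(|\nu|\le s-2\), we take \(\partial^\nu\Delta R\) in \(L^2\) at cost \(X_{s-1}\), with \(\overline{\partial^{\alpha-\nu}F^N}\) in \(L^\infty_x\) bounded by \(\|\overline{F^N}\|_{W^{s,\infty}}\lesssim\|\overline{F^N}\|_{H^{s+1}_x}\). The available bound on \(F^N\) is \(CM_N\) in \(H^{s}_x\) only, because \(F^N\) involves \(v^N\partial_x\theta^N\) and \(w^{N,0}R^N\), and \(\|w^{N,0}\|_{H^s}\le\|v^N\|_{H^{s+1}}\). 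We therefore use \eqref{eq:2.16} with \(\partial^{\alpha-\nu}\overline{F^N}\) in \(H^{1/2+}_x\); this one-dimensional trace-type balance is the delicate point. When \(|\nu|=s-1\), \(\partial^\nu\Delta R\) is an \(s\)-th derivative of \(\delta\theta^N\), bounded by \(X_s\), and it multiplies \(\partial^{\alpha-\nu}\overline{F^N}\) with \(|\alpha-\nu|=1\), which lies in \(L^\infty\) with a bound independent of \(M_N\). This gives \(CX_s^2\).

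\textbf{Main obstacle.} The hard step is the intermediate range of the second sum, together with distributing derivatives in \(\overline{F^N}\). The weight must fall on \(M_N\) only when the \(\delta\theta^N\) factor has at most \(s-1\) derivatives in \(L^2\) or \(s-2\) derivatives in \(L^\infty\). The first thing to try is the Gagliardo–Nirenberg-type endpoint-free estimate \eqref{eq:2.5}, applied with \(f=\Delta R\) and \(g=\overline{F^N}\), but at the level \(H^{s-1}\) for \(\Delta R\) and \(H^{s+1}\) for \(\overline{F^N}\), interpolated so that each term is at most \(M_NX_{s-1}+X_s\).
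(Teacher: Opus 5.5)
You handle the smooth identity, both quadratic-form terms, the first sum, the $R^N$ endpoint, and the case $|\nu|=s-1$ of the second sum correctly, essentially as the paper does. The gap is the range $|\nu|\le s-2$ of the second sum. You call it the main obstacle but leave it open, and none of the devices you propose for it works as stated:
\begin{itemize}
\item Bounding $\overline{\partial^{\alpha-\nu}F^N}$ in $L^\infty_x$ through $\|\overline{F^N}\|_{W^{s,\infty}_x}$ needs more than $H^s_x$.
\item Putting that factor in a fixed $H^{1/2+}_x$ fails for the same reason at $\nu=0$, $|\alpha|=s$.
\item Applying \eqref{eq:2.5} ``at level $H^{s+1}$ for $\overline{F^N}$'' is not available. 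Since $\overline{F^N}=\partial_x\overline{v^N\theta^N}$ and $M_N$ controls $\theta^N$ only in $H^{s+1}$, it controls $\overline{F^N}$ only in $H^s_x$. Also, \eqref{eq:2.5} uses one common level $r$ for both factors.
\end{itemize}
So the bound $CX_s^2+CM_N^2X_{s-1}^2$ for this part is not established.

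Nothing delicate is needed. The paper applies \eqref{eq:2.16} with $\nu$-dependent indices: $\partial^\nu\Delta R\in H^{s-2-|\nu|}$ and $\overline{\partial^{\alpha-\nu}F^N}\in H^{s-|\alpha-\nu|}$. These indices are nonnegative and sum to $2s-2-|\alpha|\ge s-2>1$, so
\[
 \|(\partial^\nu\Delta R)\overline{\partial^{\alpha-\nu}F^N}\|_2
 \le C\|\Delta R\|_{H^{s-2}}\|\overline{F^N}\|_{H^s_x}\le CM_NX_{s-1}.
\]
Your own $L^2\times L^\infty$ split can also be repaired. In one dimension $H^s_x\hookrightarrow W^{s-1,\infty}_x$, which covers every term with $|\alpha-\nu|\le s-1$. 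The only exception is $\nu=0$, $|\alpha|=s$, where only $\alpha=(s,0)$ gives a nonzero average. There you swap the roles: $\|\Delta R\|_\infty\|\partial_x^s\overline{F^N}\|_{L^2_x}\le CX_{s-1}M_N$.

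A third option is \eqref{eq:2.5} at level $r=s$. It bounds all mixed terms ($0<\nu\le\alpha$, $\nu\ne\alpha$) by $C(\|\nabla\Delta R\|_\infty\|\overline{F^N}\|_{H^{s-1}_x}+\|\partial_x\overline{F^N}\|_\infty\|\Delta R\|_{H^{s-1}})\le CX_s$, using only the $H^s$ bounds on $U^N$. Then $M_N$ enters only through the $\nu=0$ term.
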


\begin{proof}
Fix a time at which \(M_N(t)<\infty\) and suppress the time variable.
For smooth functions, \eqref{eq:2.6} gives
\begin{equation}\label{eq:5.3.5}
\begin{aligned}
 [\partial^\alpha,\mathcal P_R]\Delta F
       +\partial^\alpha((\mathcal P_{R^N}-\mathcal P_R)F^N)
 =&-\frac{1}{\gamma}\sum_{0<\nu\le\alpha}\binom\alpha\nu
       (\partial^\nu R)\overline{\partial^{\alpha-\nu}\Delta F}\\
 &-\frac{1}{\gamma}\sum_{\nu\le\alpha}\binom\alpha\nu
       (\partial^\nu\Delta R)\overline{\partial^{\alpha-\nu}F^N}.
\end{aligned}
\end{equation}
For \(\alpha=0\), the pairing with \(qh_0\) equals
\(\frac{1}{\gamma}\mathcal Q_{q,\overline{F^N}}(h_0)\), because
\(\Delta R=\partial_zh_0\).
For \(1\le|\alpha|\le s\), combine the highest coefficient derivatives:
\begin{equation}\label{eq:5.3.6}
 (\partial^\alpha R)\overline{\Delta F}
       +(\partial^\alpha\Delta R)\overline{F^N}
 =(\partial^\alpha R^N)\overline{\Delta F}
       +(\partial^\alpha\Delta R)\overline F.
\end{equation}
Since \(\partial^\alpha\Delta R=\partial_zh_\alpha\), the last term
contributes \(\frac{1}{\gamma}\mathcal Q_{q,\overline F}(h_\alpha)\)
to the weighted pairing in \eqref{eq:5.3.5}.
This proves the identity with \eqref{eq:5.3.3}.
The coefficient differentiated to order \(s\) is thus \(R^N\), not \(R\).

We now estimate \eqref{eq:5.3.3} at the stated regularity.
The flux identity \eqref{eq:2.3} implies
\(\overline F=\partial_x\overline{v\theta}\) and
\(\overline{\Delta F}=\partial_x
\overline{\delta v^N\theta^N+v\delta\theta^N}\).
In particular, \eqref{eq:2.14} and \eqref{eq:2.21} yield
\[
 \|\overline{\Delta F}\|_{H_x^{s-2}}
 \le C\bigl(\|\delta v^N\theta^N\|_{H^{s-1}}
             +\|v\delta\theta^N\|_{H^{s-1}}\bigr)
 \le CX_{s-1}.
\]
Using the same product and averaging estimates at the remaining orders,
together with \eqref{eq:2.18}--\eqref{eq:2.20}, we obtain
\begin{equation}\label{eq:5.3.7}
\begin{alignedat}{2}
&\|q\|_{W^{1,\infty}}+\|R\|_{H^{s-1}}
 +\|R\|_{W^{s-3,\infty}}\le  C,\\
&\|\overline F\|_{W_x^{1,\infty}}
 +\|\overline{F^N}\|_{W_x^{1,\infty}}\le C,\\
&\|\overline{\Delta F}\|_{L_x^\infty}
 +\|\Delta R\|_{H^{s-2}}
 +\|\Delta R\|_{W^{s-4,\infty}}\le CX_{s-1},\\
&\|\overline{\Delta F}\|_{H_x^{s-1}}
 +\|\overline{\Delta F}\|_{W_x^{2,\infty}}
 +\|\Delta R\|_{H^{s-1}}\le CX_s,\\
&\|R^N\|_{H^s}+\|\overline{F^N}\|_{H_x^s}
 +\|\overline{F^N}\|_{W_x^{3,\infty}}\le CM_N.
\end{alignedat}
\end{equation}
These bounds hold also for \(s=4\), with
\(W^{0,\infty}=L^\infty\). The averaged-flux bounds use
\(H_x^{s-1}\hookrightarrow W_x^{2,\infty}\) and
\(H_x^s\hookrightarrow W_x^{3,\infty}\), as stated in
\eqref{eq:2.19}.

Lemma~\ref{lem:vertical-endpoint} bounds the quadratic-form terms by
\(CX_s^2\). The other endpoint satisfies
\[
 \left|\frac{1}{\gamma}\int_\Omega
       q(\partial^\alpha R^N)\overline{\Delta F}\,h_\alpha\right|
 \le CM_NX_{s-1}X_s
 \le CX_s^2+CM_N^2X_{s-1}^2.
\]
For the first sum in \eqref{eq:5.3.3}, the indices
\(s-1-|\nu|\) and \(s-1-|\alpha-\nu|\) are nonnegative and
have sum \(2s-2-|\alpha|\ge s-2>1\).
The balanced product estimate \eqref{eq:2.16}, with averaged functions
identified with their \(z\)-independent extensions, therefore gives
\[
 \|(\partial^\nu R)\overline{\partial^{\alpha-\nu}\Delta F}\|_2
 \le C\|R\|_{H^{s-1}}\|\overline{\Delta F}\|_{H_x^{s-1}}
 \le CX_s.
\]
After pairing with \(qh_\alpha\) and summing, the contribution is at
most \(CX_s^2\).

For the second sum, first take \(|\nu|\le s-2\).
The indices \(s-2-|\nu|\) and \(s-|\alpha-\nu|\) are nonnegative
and again have sum \(2s-2-|\alpha|\ge s-2>1\). Thus
\eqref{eq:2.16} implies
\[
 \|(\partial^\nu\Delta R)\overline{\partial^{\alpha-\nu}F^N}\|_2
 \le C\|\Delta R\|_{H^{s-2}}\|\overline{F^N}\|_{H_x^s}
 \le CM_NX_{s-1}.
\]
If \(|\nu|=s-1\), then \(\nu\ne\alpha\) forces
\(|\alpha|=s\) and \(|\alpha-\nu|=1\).
These products are bounded in \(L^2\) by
\(C\|\Delta R\|_{H^{s-1}}\|\overline{F^N}\|_{W_x^{1,\infty}}
\le CX_s\).
The second sum consequently contributes at most
\(C(M_NX_{s-1}+X_s)X_s\), which is bounded by the right-hand side of
\eqref{eq:5.3.4}.

The preceding product bounds show that every ordinary integral in
\eqref{eq:5.3.3} is absolutely convergent at the stated regularity.
The quadratic terms are defined on \(L^2\) by
Lemma~\ref{lem:vertical-endpoint}.
Summing the estimates over \(|\alpha|\le s\) proves
\eqref{eq:5.3.4}.
\end{proof}

We next combine the projection estimate with the momentum equation
to obtain a one-sided comparison. Write
\(\delta w^N=w^N-w=\delta w^0+\delta\bar w\), where
\(\delta\bar w=-\frac{1}{\gamma}\partial_x
\overline{\delta v^N\theta^N+v\delta\theta^N}\), and define
\[
 E_{\delta,s}^N:=\|\delta v^N\|_{H^s}^2+
       g\beta\sum_{|\alpha|\le s}\int_\Omega q|h_\alpha|^2.
\]
By \eqref{eq:2.13}, \eqref{eq:2.18}, and \eqref{eq:5.3.2},
\(C^{-1}X_s^2\le E_{\delta,s}^N\le CX_s^2\) uniformly on
\([0,\tau]\).

\begin{proposition}\label{prop:one-sided-comparison}
Let \(U,U^N\) be two \(H^s\)-strong solutions satisfying
\eqref{eq:5.3.2} on \([0,\tau]\), and suppose that \(U^N\) is
\(H^{s+1}\)-strong on this interval. Then there exist \(c,C>0\) such that
\begin{equation}\label{eq:5.4.1}
 \sup_{0\le t\le\tau}E_{\delta,s}^N(t)
       +c\int_0^\tau\|\delta v^N\|_{H^{s+1}}^2\,\mathrm dt
 \le CE_{\delta,s}^N(0)
       +C\int_0^\tau M_N^2X_{s-1}^2\,\mathrm dt.
\end{equation}
With \(\tau\), the bounds in \eqref{eq:5.3.2}, and
\(\|v\|_{L^2(0,\tau;H^{s+1})}\) fixed, the constants are
independent of \(N\) and \(M_N\).
\end{proposition}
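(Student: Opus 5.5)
Subtract the two systems, with the temperature equations written in the projected form \eqref{eq:2.7}. This gives
\[
 \partial_t\delta v^N+\dots=0,\qquad
 \partial_t\delta\theta^N+\mathcal P_R\Delta F+(\mathcal P_{R^N}-\mathcal P_R)F^N=0 .
\]
Applying \(\partial^\alpha\) to the temperature equation produces the term \(\mathcal P_R\partial^\alpha\Delta F\), together with the combination \([\partial^\alpha,\mathcal P_R]\Delta F+\partial^\alpha((\mathcal P_{R^N}-\mathcal P_R)F^N)\). Its pairing with \(g\beta qh_\alpha\) is exactly \(g\beta\,\mathcal C_\alpha^N\), so Lemma~\ref{lem:projected-commutator} bounds it by \(CX_s^2+CM_N^2X_{s-1}^2\). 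Because \(\overline{h_\alpha}=0\), identity \eqref{eq:2.4} removes \(\mathcal P_R\) from the remaining pairing. We therefore only have to test \(\partial^\alpha\Delta F=\partial^\alpha(F^N-F)\) against \(g\beta qh_\alpha\), as in the weighted estimates of Lemma~\ref{lem:weighted-energy} and Proposition~\ref{prop:lower-order-stability}, but now at top order \(s\).

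\textbf{Expanding \(\Delta F\).} Write
\[
 \Delta F=v\partial_x\delta\theta^N+w^0\partial_z\delta\theta^N+R\,\delta w^0+\delta v^N\partial_x\theta^N+\delta w^0\,\partial_z\theta^N-\delta w^0\,\partial_z\theta\cdot 0 .
\]
More precisely, use
\[
 \Delta F=v\partial_x\delta\theta^N+\delta v^N\partial_x\theta^N+w^0\Delta R+\delta w^0R^N .
\]
The transport terms \(v\partial_xh_\alpha\) and \(w^0\partial_zh_\alpha\) are antisymmetric up to \(\nabla(qv)\) and \(\nabla(qw^0)\), which are bounded in \(L^\infty\) by \eqref{eq:5.3.2}. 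Their commutators are \(O(X_s)\) by \eqref{eq:2.15}; this uses \(\|w^0\|_{H^s}\le C\|v\|_{H^{s+1}}\), and its square-integrability in time is the reason the constants depend on \(\|v\|_{L^2H^{s+1}}\). The terms carrying derivatives of \(U^N\) must be arranged so that the highest derivative of \(U^N\) multiplies only \(X_{s-1}\). The endpoint \((\partial^\alpha\partial_x\theta^N)\delta v^N\) is bounded by \(M_N\|\delta v^N\|_\infty\le CM_NX_{s-1}\). Similarly, \((\partial^\alpha R^N)\delta w^0\) is bounded by \(M_N\|\delta w^0\|_\infty\le CM_NX_{s-1}\), using \eqref{eq:2.11}. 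Young's inequality then gives \(CX_s^2+CM_N^2X_{s-1}^2\).

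\textbf{Principal coupling and momentum equation.} The principal term \(R^N\partial^\alpha\delta w^0\) is written as \(R\,\partial^\alpha\delta w^0+\Delta R\,\partial^\alpha\delta w^0\). The first part is weighted by \(q\) and cancels the pressure pairing \(-g\beta\int\partial^\alpha\delta\theta^N\partial^\alpha\delta w\) in the momentum energy; the homogeneous parts pair to zero. The second part is bounded by \(\|\Delta R\|_\infty\|\delta v^N\|_{H^{s+1}}X_s\), which is absorbed by dissipation. For the momentum difference we use the transport field of \(U\): the forcing \(\delta v^N\partial_xv^N+\delta w^N\partial_zv^N\) is paired after one integration by parts, as in Proposition~\ref{prop:lower-order-stability}. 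At order \(s\) this requires \(\|\delta w^N\partial_zv^N\|_{H^{s-1}}\). Here \(\delta\bar w\in H^{s-1}_x\) gives \(X_s\), while \(\|v^N\|_{H^{s}}\)-factors are bounded and \(\|v^N\|_{H^{s+2}}\) multiplies only \(\|\delta w\|_{L^\infty}\le CX_{s-1}\), by the tame estimate \eqref{eq:2.14}. The horizontal endpoint \(\mathcal H_\alpha\) and \(\mathcal B_s(\overline{v\theta};\delta v^N,\delta v^N)\) are handled by \eqref{eq:3.2.6}--\eqref{eq:3.2.9} and absorbed into \(\eta\|\delta v^N\|_{H^{s+1}}^2+C(1+\|v\|_{H^{s+1}}^2)X_s^2\). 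The weight derivative \(\mathcal T q\) is bounded via \eqref{eq:5.2.5}.

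\textbf{Conclusion and main obstacle.} Summing yields
\[
 \frac{\mathrm d}{\mathrm dt}E^N_{\delta,s}+c\|\delta v^N\|_{H^{s+1}}^2\le C(1+\|v\|_{H^{s+1}}^2)E^N_{\delta,s}+CM_N^2X_{s-1}^2 .
\]
The rigorous justification uses \(J_L\)-regularized identities via Lemma~\ref{lem:regularized-identities}, with the vertical endpoints converging by \eqref{eq:5.3.1}. Gronwall's inequality, with \(\int_0^\tau\|v\|_{H^{s+1}}^2\) fixed, then gives \eqref{eq:5.4.1}. The main obstacle is the bookkeeping in the temperature difference: every top-order derivative must fall on \(U^N\) multiplied by a lower-order difference, and never on \(U\). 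The choice of splitting (transport by \(U\), coefficients from \(U^N\)) is made precisely to ensure this, mirroring \eqref{eq:5.3.6}.
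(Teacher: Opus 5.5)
Your proposal is correct and follows the paper's proof essentially step for step: you subtract with transport by $U$, bound the projection terms by Lemma~\ref{lem:projected-commutator}, remove $\mathcal P_R$ via \eqref{eq:2.4}, split $R^N=R+\Delta R$ in the principal coupling so that it cancels the pressure pairing, and close by Gronwall with $\|v\|_{L^2(0,\tau;H^{s+1})}$ fixed. The only deviations are minor. The paper treats the reference momentum transport $v\partial_x\delta v^N+w\partial_z\delta v^N$ by the same $H^{s-1}$ duality as $G$, so no $\mathcal B_s$ compensation is needed there; your commutator route also works. Likewise, \eqref{eq:2.17} already gives $\|G\|_{H^{s-1}}\le CX_s$ without the $H^{s+2}$-norm of $v^N$ that you invoke.
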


\begin{proof}
We first derive the estimates for smooth solutions. Set
\(G:=\delta v^N\partial_xv^N+\delta w^N\partial_zv^N\).
Subtracting the momentum equations and the projected temperature
equations \eqref{eq:2.7} yields
\[
\left\{
\begin{aligned}
 &\partial_t\delta v^N+v\partial_x\delta v^N+w\partial_z\delta v^N
       -\mu_x\partial_{xx}\delta v^N-\mu_z\partial_{zz}\delta v^N
       +\partial_x(p^N-p)=-G,\\
 &\partial_t\delta\theta^N+\mathcal P_R\Delta F
       +(\mathcal P_{R^N}-\mathcal P_R)F^N=0,
\end{aligned}
\right.
\]
where
\[
 \Delta F=v\partial_x\delta\theta^N+w^0\partial_z\delta\theta^N
                 +R^N\delta w^0+\delta v^N\partial_x\theta^N.
\]

The diagnostic formulas and \eqref{eq:2.10}, \eqref{eq:2.17},
\eqref{eq:2.21} imply \(\|\delta w^N\|_{H^{s-1}}\le CX_s\).
Together with \eqref{eq:3.2.3}, they also yield
\[
 \|G\|_{H^{s-1}}+
 \|v\partial_x\delta v^N+w\partial_z\delta v^N\|_{H^{s-1}}
 \le CX_s.
\]
As in Proposition~\ref{prop:lower-order-stability}, duality gives,
for every \(\eta>0\),
\begin{equation}\label{eq:5.3.8}
\begin{aligned}
 \left|\sum_{|\alpha|\le s}
       \langle\partial^\alpha G,\partial^\alpha\delta v^N\rangle\right|
 \le{}&C\|G\|_{H^{s-1}}\|\delta v^N\|_{H^{s+1}}\\
 \le{}&\eta\|\delta v^N\|_{H^{s+1}}^2+C_\eta X_s^2.
\end{aligned}
\end{equation}
The brackets denote \(H^{-1}\)--\(H^1\) duality. The same estimate
applies to the reference transport. Thus the complete diagnostic
velocity is controlled without differentiating its homogeneous
component to order \(s\).
Using \eqref{eq:2.12} and the pressure identity \eqref{eq:3.2.11},
applied to the difference without cutoff, we obtain
\begin{equation}\label{eq:5.4.2}
\begin{aligned}
 \frac{1}{2}\frac{\mathrm d}{\mathrm dt}\|\delta v^N\|_{H^s}^2
       +c\|\delta v^N\|_{H^{s+1}}^2
       -g\beta\sum_{|\alpha|\le s}\int_\Omega
                          h_\alpha\partial^\alpha\delta w^0
 \le{}&CE_{\delta,s}^N,
\end{aligned}
\end{equation}
after choosing \(\eta\) sufficiently small.

For the temperature, \(\overline{h_\alpha}=0\), so the weighted
duality \eqref{eq:2.4} removes the principal projection.
The vector field \((v,w^0)\) is divergence-free, and
\eqref{eq:5.2.5} implies
\[
 (\partial_t+v\partial_x+w^0\partial_z)q
 =-q\partial_xv+q^2(\partial_zv)\partial_x\theta
                          +\bar wq^2\partial_zR.
\]
Its right-hand side is bounded in \(L^\infty\) by
\eqref{eq:2.18}, \eqref{eq:3.2.3}, and \eqref{eq:5.3.7}.
The product and commutator estimates \eqref{eq:2.14}--\eqref{eq:2.15},
together with \eqref{eq:2.10} and \eqref{eq:2.18}, yield
\begin{equation}\label{eq:5.4.3}
\begin{aligned}
 \sum_{|\alpha|\le s}\bigl(
 \|[\partial^\alpha,v]\partial_x\delta\theta^N\|_2
 +\|[\partial^\alpha,w^0]\partial_z\delta\theta^N\|_2\bigr)
 \le{}&CX_s+C\|v\|_{H^{s+1}}X_{s-1},\\
 \sum_{|\alpha|\le s}\bigl(
 \|[\partial^\alpha,R^N]\delta w^0\|_2
 +\|\partial^\alpha(\delta v^N\partial_x\theta^N)\|_2\bigr)
 \le{}&CX_s+CM_NX_{s-1}.
\end{aligned}
\end{equation}
For the first estimate, we use
\(\|\nabla\delta\theta^N\|_\infty\le CX_{s-1}\)
to control the factor multiplying the highest-order derivatives
of \(w^0\). The second estimate follows from
\(\|\delta w^0\|_{H^{s-1}}\le CX_s\) and
\(\|\delta w^0\|_\infty+\|\delta v^N\|_\infty\le CX_{s-1}\).
Here the additional regularity is used only to control
\(R^N\) and \(\partial_x\theta^N\) in \(H^s\).

The principal coupling is
\[
R^N\partial^\alpha\delta w^0
=R\partial^\alpha\delta w^0+\Delta R\partial^\alpha\delta w^0.
\]
By \eqref{eq:5.3.7}, \(\|\Delta R\|_\infty\le CX_{s-1}\), and
\begin{equation}\label{eq:5.4.4}
\begin{aligned}
 \left|\sum_{|\alpha|\le s}\int_\Omega
       q\Delta R\partial^\alpha\delta w^0\,h_\alpha\right|
 \le{}&CX_{s-1}X_s\|\delta v^N\|_{H^{s+1}}\\
 \le{}&\eta\|\delta v^N\|_{H^{s+1}}^2+C_{\eta,M}X_s^2.
\end{aligned}
\end{equation}
Here \(X_{s-1}\le CM\) follows from the common bounds, without a
smallness assumption on the difference.
The remaining projected terms have total weighted pairing
\(g\beta\sum_{|\alpha|\le s}\mathcal C_\alpha^N\), which is controlled
by Lemma~\ref{lem:projected-commutator}.
Testing the differentiated temperature equation by \(g\beta qh_\alpha\)
and using \eqref{eq:5.4.3}--\eqref{eq:5.4.4} therefore gives
\begin{equation}\label{eq:5.4.5}
\begin{aligned}
 &\frac{g\beta}{2}\frac{\mathrm d}{\mathrm dt}
       \sum_{|\alpha|\le s}\int_\Omega q|h_\alpha|^2
       +g\beta\sum_{|\alpha|\le s}\int_\Omega
                                \partial^\alpha\delta w^0h_\alpha\\
 \le{}&\eta\|\delta v^N\|_{H^{s+1}}^2
       +C_\eta(1+\|v\|_{H^{s+1}}^2)E_{\delta,s}^N
       +CM_N^2X_{s-1}^2.
\end{aligned}
\end{equation}

We now justify these estimates at the stated regularity.
The strong-solution bounds and \eqref{eq:2.18}, \eqref{eq:2.20}
imply \(q\in L_t^\infty W^{1,\infty}\) and
\(\partial_tq=-q^2\partial_z\partial_t\theta\in C_tL^\infty\).
Apply \(J_L\partial^\alpha\) to the equations and integrate on any
\([t_1,t_2]\subset[0,\tau]\), using
Lemma~\ref{lem:regularized-identities} with weights \(1,q\).
From the expression for \(\Delta F\), one obtains \(\Delta F\in C_tH^{s-1}\) and
\[
\|\Delta F\|_{H^{s-1}}\le CX_s.
\]
Vertical averaging is bounded on \(H^{-1}\) by \eqref{eq:2.8}.
Consequently, \eqref{eq:4.1.1} yields
\[
 [J_L,\mathcal P_R]\partial^\alpha\Delta F
 =-\frac{1}{\gamma}[J_L,R]\overline{\partial^\alpha\Delta F}
 \longrightarrow0\qquad\text{in }L_t^1L^2.
\]
The same result removes
\([J_L,v]\partial_xh_\alpha\) and
\([J_L,w^0]\partial_zh_\alpha\) in \(L_t^1L^2\).
Their principal transport pairings combine with \(\partial_tq\)
through the displayed identity for
\((\partial_t+v\partial_x+w^0\partial_z)q\).

At every finite cutoff, the pressure and principal temperature
coupling satisfy the difference version of \eqref{eq:5.2.7}:
\[
\begin{aligned}
 &\int_\Omega J_L\partial^\alpha\partial_x(p^N-p)
                         J_L\partial^\alpha\delta v^N
       +g\beta\int_\Omega qJ_L(R\partial^\alpha\delta w^0)J_Lh_\alpha\\
 ={}&g\beta\int_\Omega q[J_L,R]\partial^\alpha\delta w^0\,J_Lh_\alpha
       \longrightarrow0\qquad\text{in }L_t^1.
\end{aligned}
\]
The homogeneous pressure has zero pairing with the differentiated
velocity. The convergence follows from \eqref{eq:4.1.1}, since
\(\partial^\alpha\delta w^0\in C_tH^{-1}\).
By \eqref{eq:2.10} and \eqref{eq:5.3.7}, the product
\(\Delta R\partial^\alpha\delta w^0\) belongs to \(L_t^2L^2\),
so its regularized pairing also converges.

For the projection terms, combine the endpoints by
\eqref{eq:5.3.5}--\eqref{eq:5.3.6} in distributions before applying
\(J_L\). For \(1\le|\alpha|\le s\),
Lemma~\ref{lem:vertical-endpoint} and \eqref{eq:5.3.7} imply
\[
 -\frac{1}{\gamma}\int_\Omega
       qJ_L(\overline F\,\partial_zh_\alpha)J_Lh_\alpha
 \longrightarrow\frac{1}{\gamma}\mathcal Q_{q,\overline F}(h_\alpha)
       \qquad\text{in }L_t^1.
\]
For \(\alpha=0\), the same statement holds with
\(\overline F\) replaced by \(\overline{F^N}\).
The remaining temperature remainders belong to \(L_t^2L^2\)
by \eqref{eq:5.3.7}, the product bounds in the proof of
Lemma~\ref{lem:projected-commutator}, and \eqref{eq:5.4.3}.
Their regularized pairings converge by
Lemma~\ref{lem:regularized-identities}, since
\(M_N\in L^2(0,\tau)\) and \(v\in L_t^2H^{s+1}\).

For the momentum forcing, self-adjointness of \(J_L\) and the
duality estimate \eqref{eq:5.3.8} give
\[
\begin{aligned}
 &\int_0^\tau\left|\sum_{|\alpha|\le s}
 \langle\partial^\alpha G,(J_L^2-I)\partial^\alpha\delta v^N\rangle
 \right|\,\mathrm dt\\
 \le{}&C\|G\|_{L_t^2H^{s-1}}
       \|(J_L^2-I)\delta v^N\|_{L_t^2H^{s+1}}
       \longrightarrow0.
\end{aligned}
\]
The same limit holds with \(G\) replaced by
\(v\partial_x\delta v^N+w\partial_z\delta v^N\).
The dissipative terms converge in time, and the regularized energies
converge uniformly, by Lemma~\ref{lem:regularized-identities}.
Thus \eqref{eq:5.4.2} and \eqref{eq:5.4.5} hold in integrated form.

Adding the two estimates cancels the coupling. After absorption,
\[
\begin{aligned}
 &E_{\delta,s}^N(t)+c\int_0^t\|\delta v^N\|_{H^{s+1}}^2\,\mathrm d\rho\\
 \le{} &E_{\delta,s}^N(0)
       +C\int_0^t(1+\|v\|_{H^{s+1}}^2)E_{\delta,s}^N\,\mathrm d\rho
 +C\int_0^tM_N^2X_{s-1}^2\,\mathrm d\rho.
\end{aligned}
\]
The coefficient of the energy has time integral bounded by
\(C(\tau+\|v\|_{L_t^2H^{s+1}}^2)\), independently of \(M_N\),
and \(M_N^2X_{s-1}^2\in L^1(0,\tau)\).
Gronwall's inequality proves \eqref{eq:5.4.1}.
\end{proof}

\subsection{Proof of the main theorem}

We first specialize Proposition~\ref{prop:one-sided-comparison} to
solutions with initial data \(A\) and \(S_NA\). Suppose that these
solutions have uniform \(H^s\)-bounds, velocity dissipation bounds,
and a uniform positive stratification lower bound on a common
interval \([0,\tau]\). Then
\begin{equation}\label{eq:5.4.6}
\begin{aligned}
 \|U(\cdot;S_NA)-U(\cdot;A)\|_{C_t(H^s\times H^s)}
 \le{}&C\bigl(\|(I-S_N)A\|_{H^s\times H^s}\\
 &\qquad+\langle N\rangle
                 \|(I-S_N)A\|_{H^{s-1}\times H^{s-1}}\bigr),
\end{aligned}
\end{equation}
where \(C\) is independent of \(A,N\) when the common bounds and
\(\tau\) are fixed. Indeed, Proposition~\ref{prop:tame-persistence}
ensures that \(U^N=U(\cdot;S_NA)\) is \(H^{s+1}\)-strong on the
whole interval. Taking \(\sigma=s+1\) in \eqref{eq:5.2.8} yields
\begin{equation}\label{eq:5.4.7}
\begin{aligned}
 \int_0^\tau M_N(t)^2\,\mathrm dt
 \le{}&C\tau\bigl(1+\|U^N\|_{C_t(H^{s+1}\times H^{s+1})}^2\bigr)
       +C\|v^N\|_{L_t^2H^{s+2}}^2\\
 \le{}&C\langle N\rangle^2.
\end{aligned}
\end{equation}
For the comparison with \(U=U(\cdot;A)\), the initial difference is
\(\delta U^N(0)=-(I-S_N)A\). Combining \eqref{eq:5.4.7} with
\eqref{eq:4.2.5} at level \(\sigma=s\), we obtain
\[
 \int_0^\tau M_N(t)^2X_{s-1}(t)^2\,\mathrm dt
 \le C\langle N\rangle^2
                  \|(I-S_N)A\|_{H^{s-1}\times H^{s-1}}^2.
\]
Substitution in \eqref{eq:5.4.1}, followed by energy equivalence
and taking square roots, proves \eqref{eq:5.4.6}.

This comparison yields continuous dependence on a uniform local
interval.

\begin{lemma}\label{lem:local-continuity}
Let \(M\ge1\), \(\kappa>0\), and \(A\in\mathcal X_\kappa^s\)
with \(\|A\|_{H^s\times H^s}\le M\). Suppose that
\(A_n\in\mathcal Z^s\) and \(A_n\to A\) in \(H^s\times H^s\).
There exists \(\tau_0>0\) such that \(A_n\in\mathcal X^s\) and
the corresponding maximal solutions are defined on \([0,\tau_0]\)
for all sufficiently large \(n\), with
\begin{equation}\label{eq:5.5.1}
 U(\cdot;A_n)\longrightarrow U(\cdot;A)
       \qquad\text{in }C([0,\tau_0];H^s\times H^s).
\end{equation}
The time \(\tau_0\), the uniform \(H^s\)-bounds and velocity
dissipation bounds, and a common positive stratification lower bound
can be chosen using only \(M,\kappa\) and the fixed parameters.
\end{lemma}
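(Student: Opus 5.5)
The plan is the standard Bona--Smith triangle argument on the common interval supplied in Subsection~\ref{subsec:smoothing-lifespans}. First, by \eqref{eq:5.1.4} and $A_n\to A$, the data $A,A_n,S_NA,S_NA_n$ lie in $\mathcal X^s_{\kappa/2}$ with a common $H^s$-bound (say $2CM$) for all $n\ge n_1$ and $N\ge N_1$, with thresholds chosen independently. Proposition~\ref{prop:local-existence} with $(2CM,\kappa/2)$ gives $\tau_0=\tau_*(2CM,\kappa/2)$. On $[0,\tau_0]$ the four solutions exist and are $H^s$-strong, and they satisfy \eqref{eq:4.1.5}. This gives uniform $H^s$-bounds, uniform dissipation bounds and $R\ge\kappa/8$. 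By uniqueness (Proposition~\ref{prop:maximal-solution}) these are restrictions of the maximal solutions, so $\tau_0<T_{\max}$ for each of them.

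Next I split, for every $t\le\tau_0$,
\[
U(t;A_n)-U(t;A)=\bigl[U(t;A_n)-U(t;S_NA_n)\bigr]+\bigl[U(t;S_NA_n)-U(t;S_NA)\bigr]+\bigl[U(t;S_NA)-U(t;A)\bigr].
\]

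The first and third brackets are handled by \eqref{eq:5.4.6}, with $A_n$ and $A$ respectively as data. This applies because all constants depend only on the common bounds fixed above. The resulting bound is
\[
C\bigl(\|(I-S_N)A_n\|_{H^s\times H^s}+\langle N\rangle\|(I-S_N)A_n\|_{H^{s-1}\times H^{s-1}}\bigr),
\]
and similarly for $A$. By \eqref{eq:5.1.3} with $q=0$ and $q=1$, this quantity satisfies $\lim_{N}\limsup_n(\cdot)=0$. For $A$ alone, \eqref{eq:5.1.2} suffices.

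The middle bracket is where I expect the main obstacle: both data are smooth, but the comparison must be at the top level $H^s$ with $N$ fixed. Here I use that $S_NA_n$ and $S_NA$ lie in $\mathcal X^{s+1}$, and that $S_NA_n\to S_NA$ in $H^{s+1}$ for fixed $N$. Proposition~\ref{prop:tame-persistence} with $\sigma=s+1$ shows that both solutions are $H^{s+1}$-strong on $[0,\tau_0]$. Their $H^{s+1}$-norms are bounded by $C_{s+1}\langle N\rangle$ via \eqref{eq:5.2.8}, uniformly in $n$. Proposition~\ref{prop:lower-order-stability} with $\sigma=s+1$, $r=s$ and this ($N$-dependent but $n$-uniform) bound $M$ then gives
\[
\|U(\cdot;S_NA_n)-U(\cdot;S_NA)\|_{C_t(H^s\times H^s)}\le C_{N}\|S_N(A_n-A)\|_{H^s\times H^s}\to0
\]
as $n\to\infty$. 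This requires checking that the stratification lower bound $\kappa/8$ is uniform, which it is.

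Finally, I take $\limsup_{n\to\infty}$ of the sup over $[0,\tau_0]$ of the three terms at fixed $N$. The middle term vanishes, and what remains is bounded by a quantity tending to zero as $N\to\infty$. Letting $N\to\infty$ proves \eqref{eq:5.5.1}. The memberships $A_n\in\mathcal X^s$ and the statement that $\tau_0$ depends only on $M,\kappa$ follow from the first step.
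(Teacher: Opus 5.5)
Your proposal is correct and follows the paper's proof essentially step for step. It uses the same common data class from Subsection~\ref{subsec:smoothing-lifespans} and the same three-term splitting, with \eqref{eq:5.4.6} and \eqref{eq:5.1.3} for the two smoothing errors, \eqref{eq:5.2.8} together with Proposition~\ref{prop:lower-order-stability} at level $\sigma=s+1$ for the smoothed pair, and $n\to\infty$ taken before $N\to\infty$. The only difference is cosmetic: the paper takes $\tau_0=\frac12\tau_*(2M,\frac\kappa2)$, which makes $\tau_0<T_{\max}$ immediate, whereas with your choice $\tau_0=\tau_*$ uniqueness alone only gives $T_{\max}\ge\tau_0$, so the strict inequality needs one restart at $\tau_*$ as in the proof of Proposition~\ref{prop:maximal-solution}.
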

\begin{proof}
By Subsection~\ref{subsec:smoothing-lifespans}, there exist
\(n_0,N_0\ge1\) such that, for \(n\ge n_0\) and \(N\ge N_0\),
\[
 \{A,A_n,S_NA,S_NA_n\}
 \subset
 \left\{B\in\mathcal X_{\frac{\kappa}{2}}^s:
        \|B\|_{H^s\times H^s}\le2M\right\}.
\]
Set \(\tau_0=\frac{1}{2}\tau_*(2M,\frac{\kappa}{2})\).
Proposition~\ref{prop:local-existence} and \eqref{eq:4.1.5}
yield, for every \(B\in\{A,A_n,S_NA,S_NA_n\}\),
\[
 \|U(\cdot;B)\|_{C_t(H^s\times H^s)}
       +\|v(\cdot;B)\|_{L_t^2H^{s+1}}
 \le M_*\!\left(2M,\frac{\kappa}{2}\right)
\]
and
\[
 \inf_{0\le t\le\tau_0}\inf_\Omega R(t;B)
 \ge \frac{\kappa}{8}.
\]

For fixed sufficiently large \(N\), \eqref{eq:5.2.8} with
\(\sigma=s+1\) bounds the two smoothed solutions in \(H^{s+1}\),
uniformly in \(n\). Applying \eqref{eq:4.2.5} at this regularity
level yields
\[
 \|U(\cdot;S_NA_n)-U(\cdot;S_NA)\|_{C_t(H^s\times H^s)}
 \le C_N\|S_N(A_n-A)\|_{H^s\times H^s}\longrightarrow0,
\]
where \(C_N\) is independent of \(n\).
For the two smoothing errors, \eqref{eq:5.4.6} and
\eqref{eq:5.1.2}--\eqref{eq:5.1.3}, at orders \(s\) and \(s-1\),
imply
\[
\begin{aligned}
 &\lim_{N\to\infty}\limsup_{n\to\infty}
 \|U(\cdot;A_n)-U(\cdot;S_NA_n)\|_{C_t(H^s\times H^s)}= 0,\\
 &\lim_{N\to\infty}
 \|U(\cdot;S_NA)-U(\cdot;A)\|_{C_t(H^s\times H^s)}= 0.
\end{aligned}
\]
The triangle inequality proves \eqref{eq:5.5.1}, with \(n\to\infty\)
taken before \(N\to\infty\). The stated uniformity follows from the
choice of \(\tau_0\) and \eqref{eq:4.1.5}.
\end{proof}

We extend this local continuity to every compact subinterval of the
reference lifespan by a finite sequence of restarts.

\begin{proof}[Proof of Theorem~\ref{thm:hadamard}]
Proposition~\ref{prop:maximal-solution} and
Definition~\ref{def:strong-solution}, together with
\eqref{eq:1.2.1} and \eqref{eq:1.2.3} under the normalization
\eqref{eq:1.2.2}, establish assertions \textup{(i)}--\textup{(iii)}.
For \textup{(iv)}, let \(U_{0,n}\in\mathcal Z^s\) converge to
\(U_0\) in \(H^s\times H^s\), and fix \(0<T<T_{\max}(U_0)\).
By \eqref{eq:5.1.4}, \(U_{0,n}\in\mathcal X^s\) for all sufficiently
large \(n\). Write \(U_n(t)=U(t;U_{0,n})\) and
\(U(t)=U(t;U_0)\).
The strong-solution bounds on \([0,T]\) allow us to choose
\(M>1\) and \(\kappa>0\) such that
\[
 \sup_{0\le t\le T}\|U(t)\|_{H^s\times H^s}\le M-1,
 \qquad \inf_{[0,T]\times\Omega}R\ge2\kappa.
\]
Choose \(0<\delta<1\) with \(C_s\delta<\kappa\).
By \eqref{eq:5.1.4}, every \(A\in\mathcal Z^s\) satisfying
\(\|A-U(t)\|_{H^s\times H^s}<\delta\) for some \(t\in[0,T]\)
belongs to \(\mathcal X_\kappa^s\) and has norm at most \(M\).
Lemma~\ref{lem:local-continuity} therefore supplies a common restart
time \(\tau>0\), with uniform bounds, for these fixed \(M,\kappa\).
Choose a finite partition \(0=t_0<t_1<\cdots<t_J=T\) with
\(t_{j+1}-t_j\le\frac{\tau}{2}\).

Assume inductively that \(t_j<T_{\max}(U_{0,n})\) for all
sufficiently large \(n\) and that \(U_n(t_j)\to U(t_j)\) in
\(H^s\times H^s\). These assertions hold at \(j=0\).
For large \(n\), the states \(U_n(t_j)\) lie in the neighborhood
above. Apply the time-translated form of
Lemma~\ref{lem:local-continuity} with initial states
\(U_n(t_j)\) and \(U(t_j)\).
Proposition~\ref{prop:lower-order-stability} identifies the restarted
solutions with the original ones on compact overlaps.
If \(T_{\max}(U_{0,n})<t_j+\tau\), patching the restarted solution
to the earlier one would extend the maximal solution, a contradiction.
Hence, for all sufficiently large \(n\),
\[
 T_{\max}(U_{0,n})\ge t_j+\tau>t_{j+1},
 \qquad
 \|U_n-U\|_{C([t_j,t_{j+1}];H^s\times H^s)}\longrightarrow0.
\]
In particular, \(U_n(t_{j+1})\to U(t_{j+1})\), which completes
the induction. Taking the maximum over the finitely many subintervals
and enlarging the index threshold once, we obtain
\begin{equation}\label{eq:5.5.2}
 U_n\longrightarrow U
       \qquad\text{in }C([0,T];H^s\times H^s).
\end{equation}
For all sufficiently large \(n\), the last restart yields
\[
 T_{\max}(U_{0,n})\ge t_{J-1}+\tau
                  \ge T+\frac{\tau}{2}>T.
\]

Let \(w_n,p_n\) and \(w,p\) denote the corresponding diagnostic
variables, with all pressures normalized by \eqref{eq:1.2.2}.
The diagnostic formulas \eqref{eq:1.2.1}, \eqref{eq:1.2.3} and
the estimates \eqref{eq:2.10}, \eqref{eq:2.17}, \eqref{eq:2.21},
together with \eqref{eq:5.5.2}, yield
\[
 \|w_n-w\|_{C([0,T];H^{s-1})}
       +\|p_n-p\|_{C([0,T];H^s)}
 \le C\|U_n-U\|_{C([0,T];H^s\times H^s)}\longrightarrow0,
\]
where \(C\) is independent of \(n\) by the common \(H^s\)-bounds.
This completes \textup{(iv)}.
For every \(0<T<T_{\max}(U_0)\), the eventually strict lifespan
inequality implies
\(T\le\liminf_{n\to\infty}T_{\max}(U_{0,n})\).
Taking the supremum over these \(T\) gives
\[
 T_{\max}(U_0)\le\liminf_{n\to\infty}T_{\max}(U_{0,n}),
\]
including the case \(T_{\max}(U_0)=\infty\).
The liminf is taken along the admissible tail of the initial-data
sequence. This proves \textup{(v)}.
\end{proof}

%=====================================================================
% End of Section 5
%=====================================================================

\end{document}